\numberwithin{equation}{section}
\renewcommand \a{\alpha}
\renewcommand \b{\beta}
\newcommand \K{\delta}
\newcommand \la{\lambda}
\newcommand \id{\mathrm{id}}
\newcommand \br{\mathbb{R}}
\newcommand \bc{\mathbb{C}}
\newcommand \bh{\mathbb{H}}
\newcommand \rk{\operatorname{rk}}
\newcommand \Ker{\operatorname{Ker}}
\newcommand \End{\operatorname{End}}
\renewcommand \Im{\operatorname{Im}}
\newcommand \Span{\operatorname{Span}}
\newcommand \Tr{\operatorname{Tr}}
\newcommand \SO{\mathrm{SO}}
\newcommand \SU{\mathrm{SU}}
\newcommand \Sp{\mathrm{Sp}}
\newcommand \sC{\mathsf{C}}
\newcommand \sP{\mathsf{P}}
\newcommand \sN{\mathsf{N}}
\newcommand \cN{\mathcal{N}}
\newcommand \cC{\mathcal{C}}
\newcommand \cP{\mathcal{P}}
\newcommand \cV{{V}}
\newcommand \cW{{V}}
\newcommand\ag{\mathfrak a}
\newcommand\kg{\mathfrak k}
\newcommand\g{\mathfrak g}
\newcommand\cs{\mathfrak c}
\newcommand\h{\mathfrak h}
\newcommand\z{\mathfrak z}
\newcommand\m{\mathfrak m}
\newcommand \so{\mathfrak{so}}
\newcommand \spg{\mathfrak{sp}}
\newcommand \gl{\mathfrak{gl}}
\newcommand \ug{\mathfrak{u}}
\newcommand \su{\mathfrak{su}}
\newcommand \s{\mathfrak{s}}
\newcommand \n{\mathfrak{n}}
\newcommand \f{\mathfrak{f}}
\renewcommand\t{\mathfrak t}
\newcommand \Ng{\mathfrak{N}}
\newcommand \Pg{\mathfrak{P}}
\newcommand \ad{\operatorname{ad}}
\newcommand \diag{\operatorname{diag}}
\newcommand \<{\langle}
\renewcommand \>{\rangle}
\newcommand \ip{{\<\cdot,\cdot\>}}
\newcommand \ipr{{\<\cdot,\cdot\>}}
\newtheorem{theorem}{Theorem}
\newtheorem{theorema}{Theorem}
\newtheorem*{theorem*}{Theorem}
\newtheorem{corollary}{Corollary}
\newtheorem*{corollary*}{Corollary}
\newtheorem*{conj*}{Conjecture}
\newtheorem{lemma}{Lemma}
\newtheorem{proposition}{Proposition}
\newtheorem*{prop*}{Proposition}
\theoremstyle{definition}
\newtheorem*{definition*}{Definition}
\theoremstyle{remark}
\newtheorem{remark}{Remark}
\newtheorem*{notation*}{Notation}
\newtheorem*{algorithm*}{Algorithm}
\newtheorem*{example*}{Example}
\newtheorem*{observation*}{Observation}
\begin{document}

\title{Non-singular geodesic orbit nilmanifolds}

\author{Y.~Nikolayevsky}
\address{Department of Mathematical and Physical Sciences, La Trobe University, Melbourne, Australia 3086}
\email{y.nikolayevsky@latrobe.edu.au}
\thanks{The first named author was partially supported by ARC Discovery Grant DP210100951. The first named author is thankful to the University of Pennsylvania for hospitality.}

\author{W.~Ziller}
\address{Department of Mathematics, University of Pennsylvania, Philadelphia, PA 19104, USA}
\email{wziller@math.upenn.edu}
\thanks{The second named author was supported by an ROG grant from the University of Pennsylvania.}

\subjclass[2020]{53C30, 53C25, 22E25, 17B30}
% 53C30  (1973-now) Homogeneous manifolds
% 53C25  (1973-now) Special Riemannian manifolds
% 17B30  Solvable, nilpotent (super)algebras
% 22E25  Nilpotent and solvable Lie groups

%\keywords{non-singular nilmanifold, geodesic orbit manifold}

\begin{abstract}
A Riemannian manifold is called a \emph{geodesic orbit} manifolds, GO for short, if any geodesic is an orbit of a one-parameter group of isometries.  By a result of C.Gordon, a non-flat GO nilmanifold  is necessarily a two-step nilpotent Lie group with a left-invariant metric.  We give a complete classification of non-singular GO nilmanifolds. Besides previously known examples, there are new families  with 3-dimensional center, and two one-parameter families of dimensions 14 and 15.
\end{abstract}

\maketitle

\section{Introduction}
\label{s:intro}

As introduced in \cite{KV}, a Riemannian manifold $(M,g)$ is called a \emph{geodesic orbit manifold}, or a GO manifold for short, if any geodesic of $M$ is an orbit of a one-parameter subgroup of the full isometry group of $g$.   Any  geodesic orbit manifold is clearly homogeneous. The class of geodesic orbit manifolds includes (but is not limited to) symmetric, weekly symmetric  and naturally reductive spaces, as well as  generalized normal homogeneous spaces. For an up-to-date account of the state of knowledge on geodesic orbit manifolds we refer the reader to \cite{BN} and the bibliographies therein, as well as the more recent  \cite{Nik}.

By a result of \cite[Theorem~1.14]{Gor}, the study of general geodesic orbit manifolds can be, to an extent, reduced to the study of such in the following three cases: $M$ is a nilmanifold, $M$ is compact, or $M$ admits a transitive group of isometries which is semisimple of noncompact type.

In this paper, we study geodesic orbit nilmanifolds and give a full classification of \emph{non-singular} geodesic orbit nilmanifolds. Our starting point are the following two results of \cite{Gor}.

\begin{theorema}[{\cite[Theorem~2.2]{Gor}}] \label{tha:gor2}
  Any geodesic orbit nilmanifold is either two-step or abelian.
\end{theorema}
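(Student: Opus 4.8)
The plan is to convert the geodesic orbit property into a single algebraic identity on the nilpotent Lie algebra and then run an argument down the lower central series that forces it to terminate at length two.

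First I would invoke Wilson's description of the isometry group of a nilmanifold: for a simply connected nilpotent Lie group $N$ with Lie algebra $\n$ and left-invariant metric $\langle\cdot,\cdot\rangle$, the connected isometry group is $N\rtimes K$, where $K$ is the isotropy subgroup at the base point and acts on $\n$ by orthogonal automorphisms. Hence the isometry algebra is $\g=\n\oplus\kg$ with $\kg=\Der(\n)\cap\so(\n)$ and $[A,W]=A(W)$ for $A\in\kg$, $W\in\n$. Inserting this into the standard algebraic criterion for homogeneous geodesics, the geodesic through the base point with initial velocity $X\in\n$ is an orbit of the one-parameter group generated by $X+A$ for some $A\in\kg$ if and only if $\langle [X,W]+A(W),X\rangle=0$ for all $W\in\n$; using that $A$ is skew-symmetric and that $\langle[X,W],X\rangle=\langle W,\ad_X^{T}X\rangle$ (with $\ad_X^{T}$ the metric transpose of $\ad_X$), this collapses to $A(X)=\ad_X^{T}X$. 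Thus $N$ is GO if and only if for every $X\in\n$ there exists $A\in\kg$ with $A(X)=\ad_X^{T}X$.

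Now suppose $\n$ is $k$-step nilpotent with $k\ge 3$, and aim for a contradiction. With $C^1=\n$, $C^{i+1}=[\n,C^i]$, one has $C^k\ne 0=C^{k+1}$ and $C^k\subseteq\z$, the center of $\n$. The crux is that for $Y\in C^{k-1}$ one has $[C^{k-1},C^{k-1}]\subseteq C^{2k-2}\subseteq C^{k+1}=0$ (this uses $k\ge 3$), so $C^{k-1}\subseteq\Ker\ad_Y$, and hence the image of $\ad_Y^{T}$, being $(\Ker\ad_Y)^\perp$, lies in $(C^{k-1})^\perp$. Now fix $Y\in C^{k-1}$, take an arbitrary $Z\in C^k$, and apply the GO identity to $X=Y+Z$: since $Z$ is central, $\ad_X=\ad_Y$, so $\ad_X^{T}X=\ad_Y^{T}(Y+Z)\in(C^{k-1})^\perp$; on the other hand, the derivation $A$ with $A(X)=\ad_X^{T}X$ preserves the characteristic ideals $C^{k-1}$ and $C^k$, so $A(X)=A(Y)+A(Z)\in C^{k-1}$. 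Comparing, $\ad_Y^{T}(Y+Z)\in C^{k-1}\cap(C^{k-1})^\perp=0$. As $Z\in C^k$ is arbitrary, taking $Z=0$ gives $\ad_Y^{T}Y=0$, and then $\ad_Y^{T}Z=0$ for all $Z\in C^k$, i.e.\ $Z\perp\ad_Y(\n)$; letting $Y$ range over $C^{k-1}$ and summing, $Z\perp[C^{k-1},\n]=C^k$ for every $Z\in C^k$, whence $C^k=0$ --- a contradiction. Therefore $\n$ is two-step or abelian.

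I expect the reduction carried out above --- combining Wilson's theorem with the homogeneous-geodesic criterion --- to be the delicate point: one genuinely needs to know that the full connected isometry group is $N\rtimes K$ with $K$ acting by \emph{automorphisms}, since it is precisely the fact that every $A\in\kg$ is a derivation, hence preserves the lower central series, that drives the contradiction. A second subtlety is that the GO identity at a lone $X\in C^{k-1}$ gives almost nothing, so one should not fixate there; the productive move is the perturbation $X=Y+Z$ by a central $Z\in C^k$, which converts the GO identity into the transparent orthogonality $C^k\perp C^k$. Everything else is routine manipulation of the lower central series.
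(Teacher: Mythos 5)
Your proof is correct; since the paper simply cites this result from \cite[Theorem~2.2]{Gor} and does not reproduce a proof, there is no internal argument to compare against, so I checked your reasoning step by step.

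The two preparatory reductions are right: Wilson's theorem gives the isometry algebra $\g=\n\oplus\kg$ with $\kg=\Der(\n)\cap\so(\n)$, and the geodesic lemma for the reductive decomposition $\g=\kg\oplus\n$ (using that $N$ is normal, so $[X+A,W]_{\n}=[X,W]+A(W)$) collapses, via skew-symmetry of $A$, to the familiar criterion that $N$ is GO iff for every $X\in\n$ there is a skew-symmetric derivation $A$ with $A(X)=\ad_X^{T}X$. The lower-central-series argument is where the real work is, and it holds up: for $k\ge 3$ one has $[C^{k-1},C^{k-1}]\subseteq C^{2k-2}\subseteq C^{k+1}=0$, so $C^{k-1}\subseteq\Ker\ad_Y$ and hence $\operatorname{Im}\ad_Y^{T}\subseteq(C^{k-1})^{\perp}$ for every $Y\in C^{k-1}$; meanwhile any derivation preserves the characteristic ideals $C^{i}$, so $A(Y+Z)\in C^{k-1}$ whenever $Z\in C^{k}\subseteq C^{k-1}$. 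Intersecting $C^{k-1}$ with its orthogonal complement gives $\ad_Y^{T}(Y+Z)=0$; first $Z=0$ kills $\ad_Y^{T}Y$, and then linearity in $Z$ forces $C^{k}\perp[\,C^{k-1},\n\,]=C^{k}$, hence $C^{k}=0$, a contradiction. This is a clean, self-contained argument. Two small points you might tighten in a final write-up: (i) state explicitly that $C^{k}\subseteq\z$ follows from $C^{k+1}=[\n,C^{k}]=0$, since you use the centrality of $Z$ to write $\ad_{Y+Z}=\ad_Y$; and (ii) include a brief justification that derivations preserve the lower central series (a two-line induction), since that is precisely what makes the ``$A(X)\in C^{k-1}$'' half of the sandwich work. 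As you flagged yourself, the perturbation $X=Y+Z$ by a central vector is the productive move --- evaluating the GO identity at a bare $Y\in C^{k-1}$ only yields $\ad_Y^{T}Y=0$, which is not enough; the extra freedom in $Z$ is what converts the orthogonality into $C^{k}\perp C^{k}$.
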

To state the second result we recall the following standard construction.
Let $(M,g)$ be a metric $2$-step nilpotent Lie group and $(\n, \ip)$ the corresponding metric $2$-step nilpotent Lie algebra. Let $\z = [\n,\n]$,  $\ag=\n^\perp$ and
\begin{equation}\label{jz}
	J_Z \in \so(\ag) \text{ with } \<J_ZX,Y\>=\<Z,[X,Y]\>, \text{ for all } X,Y \in \ag \text{ and } Z\in \z.
\end{equation}
This defines a linear subspace
\begin{equation*}
	\cV=\Span\{  J_Z\mid Z\in \z\} \subset \so(\ag).
\end{equation*}
 Since the linear map $ Z \mapsto J_Z$ is injective, we have  $\dim \cV =\dim \z$ and we may use both interchangeably. Moreover, $\cV$ inherits an inner product, which for simplicity we again denote by $\ip$, from the push-forward of the inner product on $\z\subset\n$.

Conversely, given a Euclidean vector space $\ag$ and a  linear subspace     $\cV\subset \so(\ag)$ with inner product $\ip$,  one  defines a metric nilpotent Lie algebra  $\n$ by setting $\n=\cV\oplus\ag$ with  $\cV=[\n,\n]$  and Lie brackets $[\ag,\ag]\subset \cV$ given by \eqref{jz}. Furthermore, it inherits an inner product  by declaring the decomposition to be orthogonal and using the given inner product on $\cV$ and $\ag$. After fixing a Euclidean vector space $\ag$, this gives a one-to-one  correspondence between pairs $(\cW, \ip)$ with $\cW\subset \so(\ag)$ and simply connected metric 2-step nilpotent Lie groups.

Given such  a pair $(\cW, \ip)$, we define the skew-symmetric normalizer subalgebra $\Ng \subset \so(\ag)$  by:
\begin{equation*}
	 \Ng =\{ N \in \so(\ag) \mid N (\cW ) N^{-1}\subset \cW, \text{ and }  (\ad_N)_{|\cW}: \cW \to \cW \text{ is skew-symmetric}  \}
\end{equation*}  relative to the inner product $\ip$ on $\cW$.
\begin{definition*}
	A pair $(\cW , \ipr)$ with $\cW\subset \so(\ag)$ is called a \emph{GO-pair} if for every $J \in \cW$ and every $X \in \ag$ there exists
	$$N = N(J,X) \in \Ng\ \text{ such that }\ [N,J]=0 \text{ and }\ NX=JX.$$
\end{definition*}

We then have:
\begin{theorema}[{\cite[Theorem~2.10]{Gor}}] \label{tha:gorcond}
Let $(M,g)$ be a simply connected metric $2$-step nilpotent Lie group and $(\n, \ip)$ the corresponding metric $2$-step nilpotent Lie algebra. Then $(M,g)$ is a GO manifold if and only if, in the above notation, $(\cV, \ipr)$ is a GO-pair.
\end{theorema}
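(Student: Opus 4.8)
The plan is to deduce Theorem~\ref{tha:gorcond} from the standard algebraic criterion for a homogeneous Riemannian space to be geodesic orbit, once the isometry Lie algebra of a $2$-step nilmanifold is written out explicitly. First I would recall that, by a theorem of E.~Wilson, the identity component of the isometry group of a simply connected metric $2$-step nilpotent Lie group $(M,g)=(N,g)$ equals $N\rtimes K$ with $\Lie(K)=\kg:=\so(\n)\cap\Der(\n)$; thus, writing $\g$ for the Lie algebra of the isometry group, one has a reductive decomposition $\g=\m\oplus\h$ with $\m=\n=\ag\oplus\z$ and $\h=\kg$, the bracket on $\g$ being that of the semidirect product, so that $[D,Y]=DY$ for $D\in\kg$ and $Y\in\n$. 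By the Kowalski--Vanhecke geodesic lemma, $(M,g)$ is a GO manifold if and only if for every $X\in\n$ there is $D\in\kg$ making $A:=X+D$ a geodesic vector, i.e.
\begin{equation*}
\<[A,Y]_{\m},X\>=0\qquad\text{for all }Y\in\n .
\end{equation*}

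The second step is to unwind this condition. Write $X=X_\ag+X_\z$ and $Y=Y_\ag+Y_\z$. Since $\z$ is central and $[\n,\n]=\z$, we have $[A,Y]_{\m}=[X_\ag,Y_\ag]+DY$ with $[X_\ag,Y_\ag]\in\z$, and hence, using \eqref{jz} and the skew-symmetry of $D$,
\begin{equation*}
\<[A,Y]_{\m},X\>=\<[X_\ag,Y_\ag],X_\z\>+\<DY,X\>=\<J_{X_\z}X_\ag,Y\>-\<Y,DX\>
\end{equation*}
(here $J_{X_\z}X_\ag\in\ag$, so pairing it with $Y$ is the same as pairing it with $Y_\ag$). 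As $Y$ ranges over $\n$ this vanishes identically precisely when $DX=J_{X_\z}X_\ag$. So the GO condition becomes: for every $X_\ag\in\ag$ and every $X_\z\in\z$ there exists $D\in\kg$ with $DX=J_{X_\z}X_\ag$.

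The third step is to identify $\kg$ with the normalizer $\Ng$. Any derivation preserves $\z=[\n,\n]$, and a skew-symmetric endomorphism preserving $\z$ also preserves $\ag=\z^\perp$; so every $D\in\kg$ splits as $D=D_\ag\oplus D_\z$ with $D_\ag\in\so(\ag)$ and $D_\z\in\so(\z)$. Writing the derivation identity $D_\z[X,Y]=[D_\ag X,Y]+[X,D_\ag Y]$ for $X,Y\in\ag$ and pairing with $Z\in\z$ via \eqref{jz} turns it into the equivalent relation $[D_\ag,J_Z]=J_{D_\z Z}$ for all $Z\in\z$. This exhibits $D\mapsto D_\ag$ as an isomorphism of $\kg$ onto $\Ng$: the inclusion $[D_\ag,\cV]\subset\cV$ is the normalizer clause, $D_\z$ is uniquely recovered from $D_\ag$ by $J_{D_\z Z}=[D_\ag,J_Z]$ (so that $D_\z$ corresponds, under $\z\cong\cV$, to $(\ad_{D_\ag})_{|\cV}$), and skew-symmetry of $D_\z$, equivalently of $(\ad_{D_\ag})_{|\cV}$, is the remaining clause in the definition of $\Ng$. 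Transporting the condition of the previous paragraph: with $D\leftrightarrow N\in\Ng$ and $J:=J_{X_\z}\in\cV$, the identity $DX=J_{X_\z}X_\ag$ decomposes along $\ag\oplus\z$ into the $\ag$-part $D_\ag X_\ag=J X_\ag$, i.e. $NX_\ag=JX_\ag$, and the $\z$-part $D_\z X_\z=0$, which under $\z\cong\cV$ reads $[N,J]=0$. Since $Z\mapsto J_Z$ is a linear isomorphism onto $\cV$, quantifying over $X_\ag\in\ag$ and $X_\z\in\z$ is the same as quantifying over $X\in\ag$ and $J\in\cV$, and the resulting statement is exactly that $(\cV,\ip)$ is a GO-pair. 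This proves the theorem.

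The obstacle I anticipate is bookkeeping rather than conceptual: one must keep track that the freedom in choosing the $\h$-component $D$ of the geodesic vector $A=X+D$ is precisely the content of the theorem, handle the semidirect-product bracket and the two skew-symmetry conditions (one on $\ag$, one on $\cV\cong\z$) consistently, and get the sign in $[D_\ag,J_Z]=J_{D_\z Z}$ right, since the identification $\kg\cong\Ng$ rests on it. One also has to verify that $D_\z$ is genuinely determined by $D_\ag$, so that $D\mapsto D_\ag$ is a bijection $\kg\to\Ng$ and not merely a surjection. A minor remaining point is to justify passing to the identity component $N\rtimes K$ of the isometry group, which is legitimate because every one-parameter subgroup of the full isometry group lies in its identity component.
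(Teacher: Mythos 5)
The paper does not prove Theorem~\ref{tha:gorcond}; it simply cites it from Gordon. Your argument is correct and reconstructs the standard proof faithfully: you invoke Wilson's description of the isometry algebra $\n\rtimes\kg$ with $\kg=\so(\n)\cap\Der(\n)$, apply the Kowalski--Vanhecke geodesic-vector criterion, correctly reduce it to the pointwise condition $DX=J_{X_\z}X_\ag$, and establish the isomorphism $\kg\cong\Ng$, $D\mapsto D_\ag$, via the relation $[D_\ag,J_Z]=J_{D_\z Z}$ (which you derive with the right sign), so that the two clauses $[N,J]=0$ and $NX=JX$ of the GO-pair definition are exactly the $\z$- and $\ag$-components of $DX=J_{X_\z}X_\ag$. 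One small point you assert but do not spell out — that $D\mapsto D_\ag$ is surjective onto $\Ng$ — does require checking that for $N\in\Ng$ the reconstructed $D=N\oplus D_\z$ (with $D_\z$ defined via $J_{D_\z Z}=[N,J_Z]$, which makes sense because $Z\mapsto J_Z$ is injective and $[N,\cV]\subset\cV$) is indeed a derivation; this is exactly the computation you ran in the forward direction, read backwards, so it is fine but worth a sentence.
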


Theorems~\ref{tha:gor2} and~\ref{tha:gorcond} reduce the classification of GO nilmanifolds to the classification of GO-pairs. We have the following special cases of GO-pairs:
 \smallskip

\begin{itemize}
	\item $(\cV, \ipr)$ is called of \emph{Rep type} if $V$ is a subalgebra of $\so(\ag)$.
	\item  $(\cV, \ipr)$ is called of \emph{Clifford type} (also called generalized H-type), if $V$ is the linear span of $\dim V$ anticommuting complex structures on $\ag$.
	\item 	  $(\cV, \ipr)$ is called of \emph{centralizer type},  if in the above Definition of a GO-pair we replace the normalizer $ \Ng$ by the centralizer of $V$.
\end{itemize}
In the first case we have $V\subset\Ng $ with $N(J,X)=J$, and hence  $(\cV, \ipr)$
ia a GO-pair. The GO-pairs of Clifford type were classified in \cite{Rie,Lau}. In particular, the possible dimensions are given by $\dim\z=1,2$ or  $(\dim\ag,\dim\z)=(4,\{2,3\})$,  $(8,\{5,6,7\})$, $(16,7)$ or $ (24,7)$. GO-pairs of centralizer type were studied in \cite{Nik}.

A $2$-step nilpotent Lie algebra is called \emph{non-singular} if all nonzero elements of $\cV$ are invertible. We will also say that the subspace $\cV \subset \so(\ag)$ is non-singular. The class of such Lie algebras is surprisingly large, in particular not classified, unless $\dim V$ is small. For the current state of knowledge on non-singular $2$-step nilpotent Lie algebras, we refer the reader to~\cite{Ebe, LO} and the bibliographies therein.

Our main result is the classification of $2$-step nilpotent non-singular GO-pairs. In the theorem below, all direct sums are orthogonal, $\bh$ is the algebra of quaternions with the standard inner product, $\{\mathrm{i}, \mathrm{j} , \mathrm{k}\}$ is the standard basis for $\Im \bh$, and $L_a: \bh \to \bh$ is the left multiplication by $a \in \bh$. We denote $J_i, \; i=1, \dots, 7$ pairwise anticommuting, orthogonal complex structures on $\br^8$.

\begin{theorem} \label{th:class}
  Let $(V, \ip)$ be a $2$-step nilpotent non-singular GO-pair. Then  it is either of Clifford type, or belongs to one of  the following:
  \begin{enumerate}[label=\emph{(\alph*)},ref=\alph*]
    \item \label{it:th1} $\dim \z =1$. Then $V$ is spanned by a non-singular skew-symmetric matrix.

    \item \label{it:th2} $\dim \z =2$. We  identify $\ag$ with $\bh^p$ and then $V=\Span(J_1, J_2)$ with
    \begin{equation*}
    	J_1=(L_{a_1}, L_{a_2}, \dots, L_{a_p}),\quad  J_2=(L_{b_1}, L_{b_2}, \dots, L_{b_p}),
    \end{equation*}
    where $a_s, b_s \in \Im \bh$ are linearly independent.
    \item \label{it:th3}
    $\dim \z =3$. Two cases are possible:
    \begin{enumerate}[label=\emph{(\roman*)},ref=\roman*]
      \item \label{it:th3cen}
      Identify $\ag$ with $\bh^p$. Then  $V=\Span(J_1, J_2, J_3)$ with
      \begin{equation*}
     	\hspace{30pt}  J_1=(L_{a_1}, L_{a_2}, \dots, L_{a_p}),\quad  J_2=(L_{b_1}, L_{b_2}, \dots, L_{b_p}), \quad J_3=(L_{c_1}, L_{c_2}, \dots, L_{c_p}),
      \end{equation*}
      where $\{a_s,b_s,c_s\}$ is a basis of $\Im\bh$, for every $s = 1, \dots p$.
      \item \label{it:th3rep}
      Let $\ag= \bh^p \oplus W, \, p \ge 0$, and $\rho\colon\so(3)\to \so(W)$ be a quaternionic representation on $W=\br^{4q}, \, q>1$, with no $4$-dimensional subrepresentations. Then $V=\Span(J_1, J_2, J_3)$ with
      \begin{gather*}
      	\hspace{30pt}  J_1=(\la_1 L_{\mathrm{i}}, \dots, \la_p L_{\mathrm{i}}, \rho(\mathrm{i})),\quad  J_2 = (\la_1 L_{\mathrm{j}}, \dots, \la_p L_{\mathrm{j}},\rho(\mathrm{j}) ), \\
      \hspace{30pt}  J_3=(\la_1 L_{\mathrm{k}},  \dots, \la_p L_{\mathrm{k}},\rho(\mathrm{k})),
      \end{gather*}
     where $\la_s \ne 0$ and where we identify $\so(3)$ with $\Im\bh$.
    \end{enumerate}

    \item \label{it:th6}
    $\dim \z =6$. Then $\dim \ag=8$ and $V=\Span(J_1, J_2, J_3, J_4, J_5,J')$, where $J'=J_7 \cos \theta + J_6J_7 \sin \theta, \; \theta \in (0, \pi/2)$.

    \item \label{it:th7}
    $\dim \z =7$. Then
   $\dim \ag=8$ and $V=\Span(J_1, J_2, J_3, J_4, J_5,J_6,J')$, where $J'=J_7 \cos \theta + J_6J_7 \sin \theta, \; \theta \in (0, \pi/2)$.
  \end{enumerate}
\end{theorem}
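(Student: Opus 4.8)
\emph{Proof plan.} The plan is to play off the two hypotheses against one another --- invertibility of every $J\in V$ and the GO condition of Theorem~\ref{tha:gorcond} --- first decomposing $\ag$ into a short list of ``homogeneous'' blocks, then classifying the blocks, and finally deciding which combinations are compatible. The case $\dim V=1$ is immediate, since $V\subset\Ng$ and one may take $N(J,X)=J$. For $\dim V\ge 2$ I would first extract two elementary consequences of the GO condition. Fixing $J\in V$, $X\in\ag$ and $N=N(J,X)$: from $[N,J]=0$ and $NX=JX$ one gets $p(N)X=p(J)X$ for every real polynomial $p$, and since $J$ is invertible, $J^{-1}$ --- indeed any $f(J)$ --- is a polynomial in $J$, so $N$ reproduces $f(J)$ on $X$; this is one of the principal ways non-singularity enters. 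Secondly, $\ad_N|_V\in\so(V)$ with $J\in\Ker(\ad_N|_V)$, so when $\dim V=2$ one may always take $N$ in the centralizer $\Cg$ of $V$, i.e.\ a $2$-dimensional GO-pair is of centralizer type. In general one has the exact sequence $0\to\Cg\to\Ng\to\so(V)$, the last map being $N\mapsto\ad_N|_V$, and the GO condition demands, for each $J$, elements of $\Ng$ centralizing $J$ and agreeing with $J$ on an arbitrary prescribed vector.

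The structural step is the block decomposition: using the polynomial identity and non-singularity, split $\ag$ orthogonally and $\Ng$-invariantly into homogeneous summands and classify them. For $\dim V=2,3$ the centralizer/normalizer dichotomy reduces matters to determining how large the centralizer $\Cg$ of $V$ can be and, in the residual $\dim V=3$ non-centralizer case, which $\so(3)$-module structures on $\ag$ are compatible with $V$. This forces $\ag$ to be a sum of copies of the basic module $\bh$, with $V$ acting by quaternionic left multiplications, together with --- in the non-centralizer $\so(3)$-case --- one further quaternionic summand $\rho$ whose $4$-dimensional subrepresentations are excluded, since those would merely be additional $\bh$-blocks. This yields (\ref{it:th1}), (\ref{it:th2}), (\ref{it:th3cen}) and (\ref{it:th3rep}).

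For $\dim V\ge 4$ the two conditions become very rigid: no summand of quaternionic left-multiplication type can survive --- such a space of operators on a sum of $\bh$-blocks, even when non-singular, fails the GO condition --- which pushes one into Clifford-type (generalized H-type) territory. I would then show that $\dim V=4$ and $\dim V\ge 8$ do not occur, that $\dim V=5$ forces the Clifford model on $\br^8$, and that for $\dim V=6,7$ the pair is either of Clifford type or has $\dim\ag=8$; in the latter case, working inside $\End(\br^8)\cong\Cl_6$ and imposing the GO condition together with invertibility of every $\sum_i\alpha_iJ_i+\beta J'$, the deformed generator is forced into the form $J'=J_7\cos\theta+J_6J_7\sin\theta$ with $\theta\in(0,\pi/2)$ --- the value $\theta=0$ returning the Clifford system and $\theta=\pi/2$ producing singular elements. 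This yields (\ref{it:th6}) and (\ref{it:th7}).

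The main obstacle is this last part together with the block decomposition itself: one must rule out all ``hybrid'' non-singular GO-pairs that are neither of Clifford type nor built from the quaternionic models, obtain the sharp bound $\dim V\le 7$, and establish the rigidity $\dim\ag=8$ for the non-Clifford pairs with $\dim V=6,7$. I expect to get the dimension bound by restricting the GO condition to a single block on which a Clifford structure is already forced and invoking Radon--Hurwitz/Clifford-module constraints, and to propagate rigidity across blocks using the requirement that $\ad_N|_V$ be skew-symmetric on $(V,\ipr)$. Conversely, checking that every pair listed in the theorem does satisfy the GO condition is a routine verification from Theorem~\ref{tha:gorcond}.
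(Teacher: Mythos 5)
The proposal has the right overall shape (small $\dim V$ via centralizer arguments, irreducible blocks, Clifford theory for larger $\dim V$, a deformation analysis in $\dim\ag=8$), but it is missing the central mechanism that makes the classification go through, and the substitute you propose for it would not work.

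The crucial step in the paper is an \emph{upper} bound on $\dim\ag$ in terms of $\dim V$ (Proposition~\ref{p:dimbound}, Table~\ref{tab:dims1}). This comes from the following idea, which your plan does not contain: for a generic $J\in V$ and a regular $X\in\ag$, the GO condition forces the stabilizer in the extended centralizer $\overline{\cC}(J)=\cC(J)\oplus\br J$ to contain an element $N-J$ with $N\in\cC(J)$, $N\ne 0$, and the normal slice representation at a principal point to be trivial. Even though $\overline{\sC}(J)$ need not be compact, the GO condition makes its action orbit-equivalent to that of the compact $\sC(J)$, so the Principal Orbit Theorem applies. Feeding this into the identity $d_X\ge\dim\ag-\dim\cC(J)-1+\dim\overline{\cC}(J)_X+\dim\h_X$ (equation~\eqref{eq:dimLX}) together with the kernel bound of Lemma~\ref{l:dimkerN} ($\dim\Ker N\le\frac12\dim\ag$ for any $N\in\Ng(\cV)\setminus\{0\}$ when $V$ is non-singular) yields $\dim\cC_P(J)\gtrsim\tfrac14\dim\ag$ or $\gtrsim\tfrac12\dim\ag$, and hence the finite list in Table~\ref{tab:dims1} once one brings in the classification of transitive/low-cohomogeneity sphere actions (\cite{Str}, \cite{HH}). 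Your proposed replacement --- ``restricting the GO condition to a single block on which a Clifford structure is already forced and invoking Radon--Hurwitz/Clifford-module constraints'' --- cannot produce this bound: the Radon--Hurwitz inequality~\eqref{eq:RH} gives a \emph{lower} bound $\dim\ag\ge f(\dim V)$ for non-singular $V$, which is the wrong direction, and no Clifford structure is ``already forced'' on a block before the bound is established (indeed the whole difficulty of the $\dim\ag=8$ case is that Clifford type is \emph{not} forced there).

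There is a second gap of the same kind at the small-dimension end. Your argument that $\dim V=2$ gives centralizer type is correct (any $\ad_N|_V\in\so(2)$ with $J$ in its kernel vanishes), but for $\dim V=3$ the non-centralizer case requires the argument that $\cP(\cV)\subset\so(3)$ must be the full $\so(3)$ when the pair is not of centralizer type (Lemma~\ref{l:forc}\eqref{it:cpab}) and then a nontrivial verification --- Section~\ref{ss:dim3} --- that a weighted sum of quaternionic left-multiplications and a $\rho(\so(3))$ block satisfies the GO condition precisely when all weights on blocks with $\dim\ag_i>4$ agree, using Lemma~\ref{l:linind}. Your plan asserts the outcome without this check. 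Similarly the claim that ``no summand of quaternionic left-multiplication type can survive'' for $\dim V\ge 4$ is true, but as stated it is exactly the conclusion you need, not an argument; it is established in the paper only \emph{after} the dimension bound via the case-by-case analysis of Table~\ref{tab:dims1} and the separate treatment of the reducible case in Lemma~\ref{l:567C}. In short: the skeleton you sketch matches the paper, but the two load-bearing ideas --- the orbit-equivalence/Principal-Orbit argument that produces the dimension bound, and the gluing analysis for reducible $\ag$ via Lemma~\ref{l:red}\eqref{it:redCij} and Lemma~\ref{l:567C} --- are not present, and the tools you do name would not supply them.
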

The GO-pairs with $\dim \z=1, 2$, or $\dim \z=3$ in case~\eqref{it:th3cen} are precisely those of centralizer type, and if $\dim \z=3$ with $\la_s = 1$ for all $s=1, \dots, p$ in case~\eqref{it:th3rep}, they are of Rep type. The examples in~\eqref{it:th6} and~\eqref{it:th7} can be viewed as deformations of the Clifford GO-pair with $\theta=0$. For different values of $\theta$ they are not isomorphic as Lie algebras. The examples as described in~\eqref{it:th2} were previously discovered in \cite{Nik}.

We can be more precise about the allowed inner products. On $\ag$ it is given, and on $V$ we can choose any {\it admissible} inner product, i.e. one such that $ (\ad_N)_{|\cW}$ is skew-symmetric for all $N \in \so(\ag)$ which normalize $V$ (or more generally, for a subalgebra of such elements that still satisfies the above Definition of a GO-pair). One such inner product on $\cW\subset \so(\ag)$ is given by a negative multiple of the restriction of the Killing form of $\so(\ag)$. We call such inner products \emph{standard}. For the GO-pairs in Theorem~\ref{th:class} we have:
\begin{itemize}
	\item If $V$ is of centralizer type, then any inner product on $V$ is admissible.
	\item If $\dim V=3$ in case~\eqref{it:th3rep}, the inner product is standard.
	\item If $\dim V =6$ or $7$, then the restriction of the inner product to $\Span(J_1, J_2, J_3, J_4, J_5)$ is standard and $J' \perp J_a$, respectively  $J_6,J' \perp J_a$ for $a=1, \dots, 5$.
\end{itemize}
For GO $2$-step nilpotent Lie algebras of a Clifford type, the admissible inner products were classified in~\cite[Theorem~5.10]{Lau}.

 We point out that  for the GO pairs of centralizer type, our proof gives rise to a classification without the assumption of being n0n-singular, see  Corollary~\ref{c:GOct}. For $\dim\z=1$ this is well known, and for $\dim\z=2$ one finds another proof in~\cite[Theorem~6.1]{Lau}.
\smallskip

The structure of the proof is as follows. From earlier analysis, we will be able to assume that the inner product on $V$ is standard. We then show that the classification reduces to the case of irreducible GO-pairs, i.e., ones where `the action' of  $V$ on $\ag$ is irreducible, and show how the general GO-pairs  can be obtained by `gluing' the irreducibles. The main part of the proof is thus the classification of irreducible GO-pairs. After obtaining certain bounds on $\dim\ag$ and $\dim V=\dim\z$, we get a short list of possible configurations. A case by case study then finishes the proof.
\smallskip

The paper is organized as follows. In Section~\ref{s:prelim} we collect necessary facts on GO-pairs, some of which hold without the assumption of being non-singular.  Then in Section~\ref{s:examples} we consider the three most interesting examples in Theorem~\ref{th:class}, namely \eqref{it:th3rep}, \eqref{it:th6} and \eqref{it:th7}. We prove that they are indeed GO and determine the admissible inner products. In Section~\ref{s:proofdim8} we give the proof of Theorem~\ref{th:class} in the most important case, when $\dim \ag = 8$ assuming certain dimension bounds which we postpone to later. Section~\ref{s:irst} contains the remainder of the proof of Theorem~\ref{th:class}.

\section{Preliminaries. Structure of GO-pairs}
\label{s:prelim}

\subsection{Normalizer, centralizer and reducibility}
\label{ss:nc}

Let $(\ag, \ip)$ be a Euclidean vector space with inner product $\ip$  and $\so(\ag)$ the Lie algebra of skew-symmetric matrices. For a linear subspace $\cV \subset \so(\ag)$, we define the centralizer $\cC(\cV)$ and the normalizer $\cN(\cV)$ of $\cV$ by

\begin{itemize}
	\item $\cC(\cV)=\{A\in\so(\ag)\mid [A,V]=0\}$
		\item $\cN(\cV)=\{A\in\so(\ag)\mid [A,V]\subset V\}$
		\item $ \cN(\cV)=\cC(\cV)\oplus \cP(\cV)$,
\end{itemize}
where the latter is the direct sum of ideals orthogonal with respect to the Killing form of $\so(\ag)$. Notice that by the Jacobi identity, both $\cC(\cV)$ and $\cN(\cV)$ are subalgebras of $\so(\ag)$ and that $\cC(\cV)$ is  an ideal in $\cN(\cV)$, and hence its orthogonal complement is an ideal as well. Moreover, both $\ag$ and the subspace $\cV \subset \so(\ag)$ are $\cP(\cV)$- and $\cN(\cV)$-modules, and the representation of $\cP(\cV)$ on $\cV$ is faithful.

Now assume that $\cV$ is equipped with an inner product, which for simplicity of notation, we again denote by $\ipr$. We then define the \emph{skew-symmetric normalizer}:
 $$\Ng(\cV)=\{A\in\so(\ag)\mid [A,V]\subset V \text{ and } (\ad_N)_{|\cV}:\cV \to \cV \text{  is skew-symmetric} \}$$
 	with respect to the inner product $\ipr$.
 Notice that $\cC(V)$ is an ideal in $\Ng(\cV)$. We again define the orthogonal sum of ideals:
 $$
 \Ng(\cV)=\cC(\cV)\oplus \Pg(\cV)
 $$
  We call $\Pg(\cV)$ the \emph{pure normalizer} of $\cV$, relative to $\ipr$. Note that $\Pg(\cV)$  is a subalgebra in $\cP(\cV)$, and is acting faithfully on $\cV$. The inner product on $V$ that defines  $\Ng(\cV)$ and  $\Pg(\cV)$ will be clear from context.

If the inner product $\ipr$ on $\cV$ is negatively proportional to the restriction of the Killing form of $\so(\ag)$ to $\cV$ we call it \emph{standard}. For a standard inner product we clearly have $\Ng(\cV)=\cN(\cV)$ and $\Pg(\cV)=\cP(\cV)$.

We finally make the following observation. The connected Lie groups $\sP$ and $\sC$ with Lie algebras $\cN(\cV)$ and $\cC(\cV)$ respectively are compact since they are stabilizer and centralizer groups and hence act properly on both $\ag$ and $V$. The same is true for the Lie group with Lie algebra $\Pg(\cV)$, although it may not be compact,  since any Lie group that acts by isometries in some metric is a proper group action. Thus we can apply, e.g. the principle isotropy Lemma to these actions.

We note the following two simple facts.

\begin{lemma} \label{l:sum}
  Let $\cV$ be a subspace of $\so(\ag)$ with the inner product $\ipr$. In the above notation we have
  \begin{enumerate} [label=\emph{(\alph*)},ref=\alph*]
    \item \label{it:cNcCm}
    $\Ng(\cV) \subset \cN (\cV) \subset \cC(\cV) + \m(\cV)$, where $\m(\cV)$ is the subalgebra of $\so(\ag)$ generated by $\cV$.
  \item \label{it:CgCg}
    $[\cC(\cV), \Pg(\cV)] = [\cC(\cV),\cP(\cV)]=0$.
  \end{enumerate}
\end{lemma}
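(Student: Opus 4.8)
The plan is to prove the two assertions separately; both come down to elementary facts about the compact Lie algebra $\so(\ag)$ together with a fixed positive-definite $\Ad$-invariant inner product on it, for which I would take $\langle X,Y\rangle_0=-\Tr(XY)$. (For $\dim\ag\ge3$ this is a negative multiple of the Killing form of $\so(\ag)$, so the orthogonal complements taken with respect to $\langle\cdot,\cdot\rangle_0$ coincide with those used to define $\cP(\cV)$ and $\Pg(\cV)$; the cases $\dim\ag\le2$, where $\so(\ag)$ is abelian, are trivial.)

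For \eqref{it:cNcCm}, the inclusion $\Ng(\cV)\subset\cN(\cV)$ is immediate from the definitions. For $\cN(\cV)\subset\cC(\cV)+\m(\cV)$, I would take $A\in\cN(\cV)$ and write $A=A_1+A_2$ with $A_1\in\m(\cV)$ and $A_2$ in the $\langle\cdot,\cdot\rangle_0$-orthogonal complement of $\m(\cV)$ in $\so(\ag)$; it then suffices to show $A_2\in\cC(\cV)$. For $X\in\cV$ we have $[A,X]\in\cV\subset\m(\cV)$ and $[A_1,X]\in\m(\cV)$ (since $\m(\cV)$ is a subalgebra containing both $A_1$ and $X$), so $[A_2,X]=[A,X]-[A_1,X]\in\m(\cV)$, and hence also $[X,[A_2,X]]\in\m(\cV)$. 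By $\Ad$-invariance, $\langle[A_2,X],[A_2,X]\rangle_0=\langle A_2,[X,[A_2,X]]\rangle_0$, which vanishes because $A_2$ is orthogonal to $\m(\cV)$; positive-definiteness then forces $[A_2,X]=0$ for all $X\in\cV$, i.e. $A_2\in\cC(\cV)$ and $A\in\m(\cV)+\cC(\cV)$.

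For \eqref{it:CgCg}, recall (as already observed above) that $\cC(\cV)$ is an ideal of the subalgebra $\cN(\cV)$, and that its orthogonal complement $\cP(\cV)$ with respect to the negative-definite Killing form of $\so(\ag)$ is an ideal of $\cN(\cV)$ as well, with $\cC(\cV)\cap\cP(\cV)=0$ by the directness of $\cN(\cV)=\cC(\cV)\oplus\cP(\cV)$. Since two ideals of a Lie algebra with zero intersection commute, $[\cC(\cV),\cP(\cV)]=0$; and because $\Pg(\cV)\subset\cP(\cV)$, the identity $[\cC(\cV),\Pg(\cV)]=0$ follows at once.

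I do not anticipate a genuine obstacle here — this is exactly the routine bookkeeping that the phrase ``two simple facts'' advertises. The only point worth a moment's care is that the vanishing argument in \eqref{it:cNcCm} and the orthogonal-complement argument in \eqref{it:CgCg} both require a bona fide positive-definite invariant inner product on $\so(\ag)$ (so that ``orthogonal complement of an ideal is an ideal'' and the implication $\langle[A_2,X],[A_2,X]\rangle_0=0\Rightarrow[A_2,X]=0$ are legitimate), which is precisely where compactness of $\so(\ag)$ enters; everything else is the Jacobi identity.
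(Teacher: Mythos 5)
Your proof of part \eqref{it:CgCg} is essentially the paper's: both derive it from $\cC(\cV)$ and $\cP(\cV)$ (respectively $\Pg(\cV)$) being complementary ideals in $\cN(\cV)$ (respectively $\Ng(\cV)$). For part \eqref{it:cNcCm}, however, you take a genuinely different and more elementary route. The paper argues structurally: any $N\in\cN(\m(\cV))\setminus\m(\cV)$ makes $\m(\cV)\oplus\br N$ a subalgebra of $\so(\ag)$ containing $\m(\cV)$ with codimension one, and since both are reductive one can replace $N$ modulo $\m(\cV)$ by an $N'$ centralizing $\m(\cV)$. Your approach instead decomposes $A=A_1+A_2$ orthogonally against $\m(\cV)$ and runs the standard invariance trick, $\langle[A_2,X],[A_2,X]\rangle_0=\langle A_2,[X,[A_2,X]]\rangle_0=0$, to force $[A_2,X]=0$ directly; the inclusion $[X,[A_2,X]]\in\m(\cV)$ is precisely what you need, and you've verified it. This avoids the appeal to the classification-flavoured fact about codimension-one reductive subalgebras, at the cost of a slightly longer computation; both are correct, and your version is more self-contained. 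One small point: you derive $[\cC(\cV),\Pg(\cV)]=0$ from $\Pg(\cV)\subset\cP(\cV)$ rather than by repeating the complementary-ideals argument inside $\Ng(\cV)$; that inclusion does hold (since $\cC(\cV)\subset\Ng(\cV)\subset\cN(\cV)$, so the orthogonal complement of $\cC(\cV)$ in $\Ng(\cV)$ sits inside its complement in $\cN(\cV)$), so this shortcut is legitimate, but it is worth stating explicitly since the paper defines $\Pg(\cV)$ independently.
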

\begin{proof}
For assertion~\eqref{it:cNcCm}, the inclusion $\Ng(\cV) \subset \cN(\cV)$ follows from the definition. Furthermore, $\cV$ and $\m(\cV)$ have the same centralizer $\cC(\cV)$, and $\Ng(\cV)$ lies in the normalizer $\cN(\m(\cV))$ of $\m(\cV))$. But $\cN(\m(\cV)) = \m(\cV) + \cC(\cV)$. Indeed, the inclusion $\cN(\m(\cV)) \supset \m(\cV) + \cC(\cV)$ is obvious. On the other hand, if $N \subset \cN(\m(\cV)) \setminus \m(\cV)$, then $\m(\cV) \oplus \br N$ is a subalgebra of $\so(\ag)$ containing a subalgebra of codimension one. As both subalgebras are reductive, this is only possible when $\m(\cV) \oplus \br N = \m(\cV) \oplus \br N'$ for some $N'$ which centralises $\m(\cV)$.

Assertion \eqref{it:CgCg} follows from the fact that $\cC(\cV)$ and $\Pg(\cV)$, respectively $\cC(\cV)$ and $\cP(\cV)$, are complementary ideals in $\Ng(\cV)$, respectively in $\cN(\cV)$.
\end{proof}

Although the subspace $\cV \in \so(\ag)$ is rarely a subalgebra, the space $\ag$ enjoys the property of complete $\cV$-reducibility and even a version of  Schur's Lemma.

For a subspace $L \subset \ag$, we denote by $\so(L)$ the subalgebra of $\so(\ag)$ consisting of the elements of $\so(\ag)$ which annihilate $L^\perp \subset \ag$, and by $\pi_L: \so(\ag) \to \so(L)$ the orthogonal projection relative to the Killing form of $\so(\ag)$.
We now make the important definition:
\begin{equation*}
L\subset\ag \text{ is called \emph{$\cV$-invariant}, if } JL \subset L \text{ for all } J \in \cV.
\end{equation*}
We say that $\ag$ is \emph{$\cV$-irreducible}, if any $\cV$-invariant proper subspace of $\ag$ is trivial. Clearly if $L \subset \ag$ is $\cV$-invariant, then so is $L^\perp$. It follows that the space $\ag$ can be decomposed into the orthogonal sum of $\cV$-irreducible subspaces: $\ag =\oplus_{i=1}^p \ag_i$.

We abbreviate the notation $\pi_{\ag_i}$ to $\pi_i$, and denote $\cV_i=\pi_i(\cV) \subset \so(\ag_i)$. When $\cV$ is equipped with an inner product $\ipr$, we define the inner product $\ipr_i$ on $\cV_i$ by push-forward of $\ipr$ under $\pi_i$.

For $i=1, \dots, p$, we denote $\Ng(\cV_i) \subset \so(\ag_i)$ the skew-symmetric normalizer of $\cV_i \subset \so(\ag_i)$ relative to the inner product $\ipr_i$, and $\cC(\cV_i)$ its centralizer. Then $\cC(\cV_i)$ is again an ideal in $\Ng(\cV_i)$, and we define the pure normalizer $\Pg(\cV_i) \subset \so(\ag)$ as the orthogonal complement to $\cC(\cV_i)$  in $\Ng(\cV_i)$ relative to the Killing form of $\so(\ag_i)$.

For $i \ne j, \; i,j=1, \dots, p$, we introduce the subspace $\h_{ij} \subset \so(\ag)$ as the orthogonal complement to $\so(\ag_i) \oplus \so(\ag_j)\subset\so(\ag_i \oplus \ag_j)$, relative to the Killing form,  and denote by $\pi_{ij}: \so(\ag) \to \h_{ij}$ the orthogonal projection to $\h_{ij}$. Finally, let $\cC_{ij}=\pi_{ij}(\cC) \subset \h_{ij}$.

We then have the following.

\begin{lemma} \label{l:red}
  Let $\cV \subset \so(\ag)$ be a linear subspace and $\ipr$ be an inner product on $\cV$. Let $\ag =\oplus_{i=1}^p \ag_i$ be an orthogonal decomposition of $\ag$ into $\cV$-irreducible subspaces. In the above notation, the following holds.

  \begin{enumerate}[label=\emph{(\alph*)},ref=\alph*]
    \item \label{it:redCdec}
    There is a direct, orthogonal decomposition $\cC(\cV) = \oplus_{i=1}^p \cC(\cV_i) \oplus \oplus_{1 \le i < j \le p} \cC_{ij}$.

    \item \label{it:redCdiva}
   The subspace $\cC(\cV_i) \oplus \br \, \id_{\ag_i} \subset \gl(\ag_i)$ is an associative division algebra,
    in particular, $\dim \cC(\cV_i) \in \{0,1,3\}$.
    If  $\dim\cC(\cV_i)=1$, it is the linear span of a complex structure on $\ag_i$, hence  $\cV_i, \Pg(\cV_i)\subset \mathfrak{u}(\ag_i)$.
     If  $\dim\cC(\cV_i)=3$, it is isomorphic to $\spg(1)$ and gives a quaternionic structure on $\ag_i$. Thus  $\cV_i, \Pg(\cV_i)  \subset\spg(\ag_i)$.

    \item \label{it:redCij}
    Suppose that $\cC_{ij} \ne 0$ for some $i<j$. Then $\dim \ag_i = \dim \ag_j$, and there exist orthonormal bases for $\ag_i$ and $\ag_j$ relative to which the matrices of $\pi_i(J)$ and $\pi_j(J)$ are equal, for all $J \in \cV$. Moreover, relative to these bases in  $\ag_i \oplus \ag_j$, the subspace $\cC_{ij} \subset \h_{ij}$ is given by $\cC_{ij} = \br \left(\begin{smallmatrix}0&I\\-I&0\end{smallmatrix}\right) \oplus \Span(\left(\begin{smallmatrix} 0&C\\C&0 \end{smallmatrix}\right) \, | \, C \in \cC(\cV_i))$.

    \item \label{it:redP}
     $\Pg(\cV) \subset \oplus_{i=1}^p \Pg(\cV_i) \subset \oplus_{i=1}^p \so(\ag_i) \subset \so(\ag)$, and is isomorphic to a subalgebra of $\so(\cV)$.
  \end{enumerate}
\end{lemma}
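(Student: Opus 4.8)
The plan is to use that every $\cV$-irreducible summand $\ag_i$ is $\cV$-invariant, so each $J\in\cV$ is block-diagonal with respect to $\ag=\oplus_i\ag_i$, with $i$-th diagonal block $\pi_i(J)\in\cV_i$. Writing an arbitrary $A\in\so(\ag)$ in block form $(A_{ij})$, the relations $[A,J]=0$ for all $J\in\cV$ split into the diagonal equations $[A_{ii},\pi_i(J)]=0$, i.e.\ $A_{ii}\in\cC(\cV_i)$, and the off-diagonal equations $A_{ij}\pi_j(J)=\pi_i(J)A_{ij}$, saying $A_{ij}$ is a $\cV$-module morphism $\ag_j\to\ag_i$ (with $A_{ji}=-A_{ij}^{\top}$ by skew-symmetry). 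Since the subspaces $\so(\ag_i)$ and $\h_{ij}$ of $\so(\ag)$ are pairwise Killing-orthogonal, this gives the direct orthogonal decomposition of \eqref{it:redCdec} once $\cC_{ij}=\pi_{ij}(\cC)$ is identified with the space of such morphisms realised as skew off-diagonal blocks. For \eqref{it:redCij}, a nonzero morphism $\phi\colon\ag_j\to\ag_i$ has $\cV$-invariant kernel and image, hence is an isomorphism by irreducibility, so $\dim\ag_i=\dim\ag_j$; the positive symmetric operator $(\phi^{\top}\phi)^{1/2}$ commutes with $\cV_j$ and is therefore scalar, so $\phi$ is a multiple of an isometric morphism $U$, and choosing an orthonormal basis of $\ag_j$ together with its $U$-image in $\ag_i$ makes $\pi_i(J)$ and $\pi_j(J)$ have equal matrices. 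In these bases the morphisms $\ag_j\to\ag_i$ become the commutant of $\cV_i$ in $\gl(\ag_i)$, which by \eqref{it:redCdiva} is $\cC(\cV_i)\oplus\br\,\id_{\ag_i}$; translating back through $A_{ji}=-A_{ij}^{\top}$ yields the stated form of $\cC_{ij}$.

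For \eqref{it:redCdiva} I would argue as in Schur's lemma. Set $D=\cC(\cV_i)\oplus\br\,\id_{\ag_i}\subset\gl(\ag_i)$. Splitting any $A\in\so(\ag_i)$ commuting with $\cV_i$ into symmetric and skew parts (both still commuting with $\cV_i$) and noting the symmetric part has $\cV_i$-invariant eigenspaces, hence is scalar, shows $D$ is exactly the commutant of $\cV_i$ in $\gl(\ag_i)$, so $D$ is an associative unital subalgebra. A nonzero $A\in D$ has $\cV_i$-invariant, hence trivial, kernel, so is invertible; finite-dimensionality then forces $A^{-1}\in D$, so $D$ is a real associative division algebra, i.e.\ $\br$, $\bc$, or $\bh$ by Frobenius, giving $\dim\cC(\cV_i)\in\{0,1,3\}$. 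When $\dim\cC(\cV_i)=1$ or $3$, scaling a skew generator $J$ so that $J^2=-\id_{\ag_i}$ (legitimate since $J^2$ is a negative scalar by irreducibility) produces a complex, respectively quaternionic, structure commuting with $\cV_i$; and since $\cC(\cV_i)$ and $\Pg(\cV_i)$ are complementary ideals in $\Ng(\cV_i)$, Lemma~\ref{l:sum}\eqref{it:CgCg} gives $[\Pg(\cV_i),\cC(\cV_i)]=0$, so $\Pg(\cV_i)$ commutes with these structures too, placing both $\cV_i$ and $\Pg(\cV_i)$ in $\ug(\ag_i)$, resp.\ $\spg(\ag_i)$.

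For \eqref{it:redP}, first note $\Pg(\cV)\subseteq\cP(\cV)$, since $\Pg(\cV)$ is the Killing-orthogonal complement of $\cC(\cV)$ inside $\Ng(\cV)\subseteq\cN(\cV)=\cC(\cV)\oplus\cP(\cV)$. The block analysis above shows $\cN(\cV)\subseteq\bigoplus_i\cN(\cV_i)\oplus\bigoplus_{i<j}\cC_{ij}$: an off-diagonal block of a normalising element is again a morphism, hence lies in $\cC_{ij}$, and a diagonal block $A_{ii}$ satisfies $[A_{ii},\cV_i]\subseteq\cV_i$. Using $\cN(\cV_i)=\cC(\cV_i)\oplus\cP(\cV_i)$ together with \eqref{it:redCdec} and the pairwise Killing-orthogonality of the summands $\cC(\cV_i),\cP(\cV_i),\cC_{ij}$, intersecting with $\cC(\cV)^{\perp}$ gives $\cP(\cV)\subseteq\bigoplus_i\cP(\cV_i)\subseteq\bigoplus_i\so(\ag_i)$. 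To sharpen $\cP(\cV_i)$ to $\Pg(\cV_i)$, write $N=\sum_i N_i\in\Pg(\cV)$ with $N_i\in\cP(\cV_i)\subseteq\so(\ag_i)$; since $\ker\pi_i=\{J\in\cV\mid J|_{\ag_i}=0\}$ is invariant under the skew operator $\ad_N|_{\cV}$, the latter commutes with the orthogonal projection of $\cV$ onto $(\ker\pi_i)^{\perp}$, and transporting through the isometry $\pi_i\colon(\ker\pi_i)^{\perp}\to(\cV_i,\ipr_i)$ shows $\ad_{N_i}|_{\cV_i}$ is skew w.r.t.\ $\ipr_i$, so $N_i\in\Ng(\cV_i)$; combined with $N_i\in\cP(\cV_i)=\cN(\cV_i)\cap\cC(\cV_i)^{\perp}$ this yields $N_i\in\Pg(\cV_i)$. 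Finally $\Pg(\cV)$ acts faithfully on $\cV$ by skew-symmetric maps by its very definition, so $\ad\colon\Pg(\cV)\hookrightarrow\so(\cV)$.

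I expect the genuinely delicate point to be this last step of \eqref{it:redP}: deducing $\Pg(\cV)\subseteq\bigoplus_i\Pg(\cV_i)$ rather than merely $\bigoplus_i\cP(\cV_i)$ requires tracking how skew-symmetry of $\ad_N$ on $(\cV,\ipr)$ survives passage through the possibly non-injective projection $\pi_i$ and its push-forward inner product $\ipr_i$. Everything else is essentially Schur's lemma, Frobenius's theorem, and the orthogonality bookkeeping already built into the definitions of $\cC,\cN,\Ng,\Pg$.
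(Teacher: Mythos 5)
Your proof is correct and follows essentially the same approach as the paper's: block decomposition of commuting/normalising elements, Schur-type arguments giving the division algebra in~\eqref{it:redCdiva} and the scalar $TT^t$ in~\eqref{it:redCij}, and orthogonality bookkeeping for~\eqref{it:redP}. The only differences are cosmetic — you characterize $\cC(\cV_i)\oplus\br\,\id$ as the full $\gl(\ag_i)$-commutant rather than deriving closure under multiplication from the Lie bracket as the paper does, and for~\eqref{it:redP} you spell out (correctly) the step the paper leaves implicit, namely that $\ad_N$ preserves $\ker\pi_i$ and hence that skew-symmetry passes to $\ipr_i$ through the isometry $\pi_i\colon(\ker\pi_i)^\perp\to\cV_i$.
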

\begin{proof}
  The required decomposition in assertion~\eqref{it:redCdec} follows, as the subspaces $\ag_i$ are $\cV$-invariant, hence $\cV \subset \oplus_{i=1}^p \so(\ag_i)$.

  For assertion~\eqref{it:redCdiva}, take $C \in \cC(\cV_i)$. Then $C^2$ commutes with $\cV_i$, and hence must be a multiple of the identity, as $\ag_i$ is $\cV$-irreducible. As $\cC(\cV_i)$ is also a Lie subalgebra, the subspace $\cC(\cV_i) \oplus \br \, \id_{\ag_i} \subset \gl(\ag_i)$ is closed under matrix multiplication and under taking the inverse. Thus $\cC(\cV_i) \oplus \br \, \id_{\ag_i} \subset \gl(\ag_i)$ is a division algebra and hence isomorphic to one of $\br, \bc$ or $\bh$, by Frobenius Theorem. The rest of the claim is straightforward.

  To prove assertion~\eqref{it:redCij} it is sufficient to prove it when $p=2$ and $i=1, \, j=2$. Suppose that a nonzero matrix $A=\left(\begin{smallmatrix} 0&T\\-T^t&0 \end{smallmatrix}\right)$ (with the block decomposition corresponding to the decomposition $\ag=\ag_1 \oplus \ag_2$) commutes with $J= \left(\begin{smallmatrix} \pi_1(J)&0\\0&\pi_2(J) \end{smallmatrix}\right)$, for all $J \in \cV$. Then the same is true for the matrix $-A^2=\left(\begin{smallmatrix} TT^t&0\\0&T^tT \end{smallmatrix}\right)$. As both $\ag_1$ and $\ag_2$ are $\cV$-irreducible, both $TT^t$ and $T^tT$ must be multiples of the identity matrices. It follows that $\dim \ag_1 = \dim \ag_2$, and we denote their common value by $m$. Thus $T$  is, up to a nonzero multiple, an orthogonal $m \times m$ matrix. We can therefore choose orthonormal bases for $\ag_1$ and $\ag_2$ relative to which $A= \left(\begin{smallmatrix}0&I_{m}\\-I_{m}&0\end{smallmatrix}\right)$ (up to a nonzero multiple), and hence the matrices of $\pi_1(J)$ and $\pi_2(J)$ are equal, for all $J \in \cV$. Now if $B=\left(\begin{smallmatrix} 0&Q\\-Q^t&0 \end{smallmatrix}\right)$, where $Q$ is an $m \times m$ matrix with $\Tr Q =0$, lies in $\cC_{12}$, then $Q$ commutes with $\pi_1(J)$, for all $J \in \cV$, and so do the symmetric and the skew-symmetric parts of $Q$. As $\ag_1$ is $\cV$-irreducible (and $\Tr Q = 0$), the matrix $Q$ must be skew-symmetric, and hence $Q \in \cC(\cV_1))$.

  For assertion~\eqref{it:redP}, take $N \in \Ng(\cV)$. As $\cV \subset \oplus_{i=1}^p \so(\ag_i)$, the projection of $N$ to the orthogonal complement of $\oplus_{i=1}^p \so(\ag_i)$ in $\so(\ag)$ lies in $\cC(\cV)$. Since $\Pg(\cV)$ is the orthogonal complement to $\cC(\cV)$ in $\Ng(\cV)$, it lies in $\oplus_{i=1}^p \so(\ag_i)$. Moreover, as the subalgebra of $\cC(\cV)$ lying in $\oplus_{i=1}^p \so(\ag_i)$ is given by $\oplus_{i=1}^p \cC(\cV_i)$ by assertion~\eqref{it:redCdec}, we obtain that $\pi_i(\Pg(\cV)) \subset \Pg(\cV_i)$.
\end{proof}

\subsection{The GO condition}
\label{ss:GOc}
We start with the following three definitions, which will be used throughout the paper.

\begin{definition*} %\label{def:anyip}
Let $(\cV, \ipr)$ be an inner product space with $\cV \subset \so(\ag)$.
 \begin{enumerate} [label=(\alph*),ref=\alph*]
	\item \label{it:defa}
    $(\cV, \ipr)$  is called a \emph{GO-pair} if for every $J \in \cV$ and every $X \in \ag$, there exists an $N = N(J,X) \in \Ng(\cV)$ such that
		\begin{gather} \label{eq:NJ}
			[N,J]=0 \quad \text{and}\\ JX=NX. \label{eq:NXJX}
		\end{gather}
	\item  $(\cV, \ipr)$ is called a \emph{GO subspace}, if~\eqref{it:defa} holds with $\Ng(\cV)$ replaced by $\cN(\cV)$.
	\item
    $(\cV, \ipr)$  is called a GO-pair of \emph{centralizer type} if~\eqref{it:defa} holds with $\Ng(\cV)$ replaced by $\cC(\cV)$. We then call $\cV$ a \emph{subspace of centralizer type}; recall that $\cC(\cV)$ does not depend on the inner product $\ipr$ on $\cV$.
	\end{enumerate}
\end{definition*}

In the terminology of~\cite[Definition~2.11]{Gor}, what we call a GO subspace is called a subspace satisfying the \emph{transitive normalizer condition}. The motivation for the  definition is explained in Theorem~\ref{tha:gorcond} in  the Introduction: there is a one-to-one correspondence between the GO-pairs and the geodesic orbit nilmanifolds.  Recall also that the data $(\cV, \ipr)$ with $V\subset \so(\ag)$ is   in one-to-one correspondence with metric 2-step nilpotent Lie algebras. Two subspaces $V_1, V_2 \subset\so(\ag)$ give rise to isomorphic Lie algebras iff the they are conjugate, and to isometric metric $2$-step nilpotent Lie algebras iff the conjugacy induces an isometry of the corresponding inner products on $V_i$.

Notice that $(\cV, \ipr)$ being a GO subspace is equivalent to saying that $(\cV, \ipr)$ is a GO-pair with respect to the restriction of the Killing form of $\so(\ag)$. In particular, if $(\cV,\ipr)$ is GO-pair for some choice of an inner product on $\cV$, then $\cV$ is a GO subspace.  The converse may clearly be false: if $\Ng(\cV)$ is a proper subalgebra of $\cN(\cV)$, it may not have ``sufficiently many" elements to satisfy the GO conditions. It follows that to classify GO-pairs one may start with classifying GO subspaces $\cV \subset \so(\ag)$ and then finding inner products $\ipr$ on them such that the GO property is preserved with respect to the subalgebra  $\Ng(\cV) \subset \cN(\cV)$. Such inner products are called \emph{admissible}. Finally, notice that if $(\cV,\ipr)$ is of centralizer type, then any product on $V$ is admissible since  $\cC(\cV)$ does not depend on the inner product. Examples and classifications of classes of GO-pairs with nonstandard inner product are given in~\cite[Theorem~5.10]{Lau} and in \cite[Theorems~1, 2]{Nik}.

\begin{remark} \label{rem:ipretc}
We note the following facts.
\begin{enumerate}[label=(\alph*),ref=\alph*]
  \item \label{it:iprcommonker} % ~\ref{def:anyip}
  If all $J \in \cV$ have a common kernel $\cs$, then the corresponding $2$-step nilpotent  Lie algebra $\n$ decomposes into the orthogonal sum of a smaller nilpotent algebra and an abelian ideal. In this case,  we can replace $\ag$ by $\ag'= \ag \cap \cs^\perp$ in the Definition without violating the property of $(\cV, \ipr)$  being a GO-pair \cite[Lemma~3]{Nik}.

  \item \label{it:iprsubalg}
  A subalgebra of $\cV\subset \so(\ag)$  is clearly a GO subspace since we can  simply choose  $N(J,X)=J$. Note that  $\cV$ being a subalgebra  is equivalent to  $\cV \subset \cN(\cV)$. If in addition the inner product on $\cV$ is admissible, which means that it is a bi-invariant metric, then $(\cV, \ipr)$ being a GO-pair is equivalent to the corresponding GO nilmanifold being \emph{naturally reductive}, see \cite[Theorem~2.5]{Gor}).
\end{enumerate}
\end{remark}

In the ``generic" case this is all one can get, as the following lemma shows.
\begin{lemma}\label{l:generic}
  Suppose $\cV \subset \so(\ag)$ contains a regular element of $\so(\ag)$. If $(\cV,\ipr)$ is a GO-pair, then $\cV \subset \Ng(\cV)$ and hence a subalgebra of $\so(\ag)$ .
\end{lemma}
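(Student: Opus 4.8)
The plan is to show that every regular element of $\cV$ already lies in $\Ng(\cV)$ and then pass to the closure. First I would fix a regular element $J \in \cV$ and set $\t = \cC(\br J) \subset \so(\ag)$; by the definition of regularity, $\t$ is a maximal torus (Cartan subalgebra) of $\so(\ag)$, in particular abelian. The structure of a maximal torus of $\so(\ag)$ provides an orthogonal decomposition $\ag = \ag_1 \oplus \dots \oplus \ag_k$, with each $\ag_j$ of dimension $2$ except for at most one summand of dimension $1$, such that $\t$ acts on a $2$-dimensional $\ag_j$ as $\so(\ag_j) = \br R_j$ for a complex structure $R_j$ on $\ag_j$, and trivially on a $1$-dimensional summand if present. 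In particular $J$ itself, lying in $\t$, acts on each $\ag_j$ as a scalar multiple of $R_j$.

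Next I would use the GO condition at $J$ together with a vector that is generic for this decomposition. Choose $X \in \ag$ with nonzero component $X_j$ in every $2$-dimensional summand $\ag_j$. The GO-pair property yields $N = N(J,X) \in \Ng(\cV)$ with $[N,J]=0$ and $NX = JX$. Since $J$ is regular, $[N,J]=0$ forces $N \in \cC(\br J) = \t$, so $N$ acts on $\ag_j$ as $b_j R_j$ while $J$ acts as $a_j R_j$ (both vanishing on the possible $1$-dimensional summand). Comparing $\ag_j$-components in $NX = JX$ gives $(b_j - a_j)R_j X_j = 0$; as $R_j$ is invertible on $\ag_j$ and $X_j \ne 0$, we get $b_j = a_j$ for all $j$, hence $N = J$ and therefore $J \in \Ng(\cV)$.

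To finish, I would invoke density. The regular elements of $\so(\ag)$ form the complement of a proper Zariski-closed subset, and by hypothesis this open set meets $\cV$; hence the regular elements of $\cV$ are dense in $\cV$. By the previous step all of them lie in $\Ng(\cV)$, which is a linear subspace of $\so(\ag)$ and therefore closed, so $\cV \subset \Ng(\cV) \subset \cN(\cV)$. The inclusion $\cV \subset \cN(\cV)$ says exactly that $[\cV,\cV] \subset \cV$, i.e.\ that $\cV$ is a subalgebra of $\so(\ag)$ (cf.\ Remark~\ref{rem:ipretc}\eqref{it:iprsubalg}).

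The step that needs the most care is the implication $NX = JX \Rightarrow N = J$: it is precisely here that regularity is used twice — first to force $N$ into the Cartan subalgebra $\t$ via $[N,J]=0$, and then, through the explicit torus action on $\ag$, to pin $N$ down to $J$ on a vector $X$ generic for the decomposition. The concluding density argument is routine but genuinely relies on the standing hypothesis that $\cV$ already contains a regular element.
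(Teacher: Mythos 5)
Your proposal is correct and follows essentially the same argument as the paper: use regularity of $J$ to confine $N$ to the Cartan subalgebra determined by $J$, compare against a vector with nonzero component in every $2$-dimensional block to force $N=J$, and then extend from the (open, hence dense) set of regular elements of $\cV$ to all of $\cV$ using linearity of $\Ng(\cV)$. The only cosmetic difference is that the paper passes from ``open subset of a linear space'' directly to the whole space, while you phrase it via Zariski-density plus closedness; both are standard and equivalent here.
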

\begin{proof}
  Let $J \in \cV$ be regular. Then any $N\in\so(\ag)$ with $[N,J]=0$  belongs to the Cartan subalgebra $\t \subset \so(\ag)$ defined by $J$. Choosing $X \in \ag$ in such a way that its component in every 2-dimensional $J$-invariant subspace is non-zero, it is easy to see that  $NX=JX$ can  only be satisfied if $N=J$ and hence $J \in \Ng(\cV)$. As the set or regular elements in $\so(\ag)$ is open, its intersection with $\cV$, which is nonempty, is relatively open in $\cV$. Repeating the argument, it follows that $\Ng(\cV)$ contains an open subset of $\cV$, and hence the whole subspace $\cV$. Thus also $V\subset \cN(\cV) $, and hence $V$ is a subalgebra.
\end{proof}

Passing to a $\cV$-invariant subspace preserves the GO property, as the following lemma shows.

\begin{lemma} \label{l:redst}
  Let $(\cV, \ipr)$ be a GO-pair, where $\cV \subset \so(\ag)$.
  \begin{enumerate}[label=\emph{(\alph*)},ref=\alph*]
    \item \label{it:redV}
    If $L \subset \ag$ is a $\cV$-invariant subspace, then $(\pi_L(\cV), \ipr_L)$ is a GO-pair.  Moreover, if $\cV \subset \so(\ag)$ is a GO subspace of centralizer type, then so is $\pi_L(\cV) \subset \so(L)$. If $V$ is non-singular, then $\pi_L(\cV)$ is non-singular as well, and $\pi_L\colon V\to \pi_L(\cV) $ is an isomorphism.

    \item \label{it:redN}
    Any $\Ng(\cV)$-invariant subspace of $\ag$ is also $\cV$-invariant, and so if $\ag$ is $\cV$-irreducible, then it is also $\Ng(\cV)$-irreducible.

    \item \label{it:redPir}
    Suppose $\ag$ is $\cV$-irreducible. Although it is $\Ng(\cV)$-irreducible by~\eqref{it:redN}, it can be $\Pg(\cV)$-reducible under its action on $\ag$. Let
    $\ag=\oplus_{i=1}^p \ag_i$, be a decomposition into irreducible $\Pg(\cV)$-modules. Then the modules $\ag_i$ are isomorphic, and we have the following cases:
    \begin{itemize}
    	\item $p=1$ if $\ag_i$ is of quaternionic type, or of complex type with $\cC(\cV) \subset \so(2)$, or of real type with $\cC(\cV)=0$.
    	\item $p=2$ if $\ag_i$ are of  complex type with $\cC(\cV) \cong \spg(1)$, or of real type with $\cC(\cV)=0$.
    	\item $p=4$  if $\ag_i$ are of  real type with $\cC(\cV) \cong \spg(1)$.
    \end{itemize}
  \end{enumerate}
\end{lemma}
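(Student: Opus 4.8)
The plan is to treat the three parts of the lemma in order: parts (a) and (b) are short arguments combining the GO condition with the block decomposition of $\so(\ag)$ along $L\oplus L^\perp$, while part (c) is the substantive one and rests on the module theory of the commuting pair of ideals $\cC(\cV),\Pg(\cV)\subset\Ng(\cV)$. For part (a) I would first note that since every $J\in\cV$ is skew-symmetric and preserves $L$ it preserves $L^\perp$ as well, so $\cV\subset\so(L)\oplus\so(L^\perp)$ and $\pi_L(J)=J|_L$ for $J\in\cV$. Given $J\in\cV$ and $X\in L$, I pick $N=N(J,X)\in\Ng(\cV)$ and check that $\pi_L(N)$ witnesses the GO condition for $(\pi_L(\cV),\ipr_L)$: decomposing $N$ in block form along $\ag=L\oplus L^\perp$, its off-diagonal block annihilates $X$ because $NX=JX\in L$, so $\pi_L(N)X=NX=JX=\pi_L(J)X$; from $[N,J]=0$ one reads off $[\pi_L(N),\pi_L(J)]=0$; and since $[\pi_L(N),\pi_L(J')]=\pi_L([N,J'])$ for all $J'\in\cV$ together with $[N,\cV]\subset\cV$, the element $\pi_L(N)$ normalizes $\pi_L(\cV)$. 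As $\pi_L|_\cV$ intertwines $\ad_N|_\cV$ with $\ad_{\pi_L(N)}|_{\pi_L(\cV)}$ and its kernel is $\ad_N$-invariant, and $\ipr_L$ is the push-forward of $\ipr$ under $\pi_L$, skew-symmetry is preserved, so $\pi_L(N)\in\Ng(\pi_L(\cV))$ and $(\pi_L(\cV),\ipr_L)$ is a GO-pair; the centralizer-type claim is the same computation with $\cC$ in place of $\Ng$. Finally, if $L\ne 0$ and $0\ne J\in\cV$ is invertible on $\ag$, then $J|_L$ is invertible and nonzero, whence $\pi_L|_\cV$ is injective with invertible image, i.e.\ an isomorphism onto a non-singular subspace.

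Part (b) is immediate: if $L\subset\ag$ is $\Ng(\cV)$-invariant, then for every $J\in\cV$ and $X\in L$ the GO condition gives $JX=N(J,X)X\in L$, so $L$ is $\cV$-invariant; in particular a $\cV$-irreducible $\ag$ admits no proper nonzero $\Ng(\cV)$-invariant subspace.

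For part (c), by part (b) the space $\ag$ is $\Ng(\cV)$-irreducible. Since $\Ng(\cV)=\cC(\cV)\oplus\Pg(\cV)$ with the two summands commuting (Lemma~\ref{l:sum}\,\eqref{it:CgCg}) and every endomorphism of $\ag$ commuting with $\Pg(\cV)$ preserves the $\Pg(\cV)$-isotypic decomposition, each $\Pg(\cV)$-isotypic component of $\ag$ is $\cC(\cV)$-invariant, hence $\Ng(\cV)$-invariant; irreducibility leaves only one, so the $\ag_i$ are mutually isomorphic. I would then write $U$ for their common type and $\mathbb{D}=\End_{\Pg(\cV)}(U)\in\{\br,\bc,\bh\}$ --- the real, complex or quaternionic type of $\ag_i$ --- so that $\ag\cong U\otimes_{\mathbb{D}}\mathbb{D}^{\,p}$ and $\End_{\Pg(\cV)}(\ag)\cong M_p(\mathbb{D})$. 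By Lemma~\ref{l:red}\,\eqref{it:redCdiva} the division algebra $\cC(\cV)\oplus\br\,\id_\ag$ is one of $\br,\bc,\bh$; it commutes with $\Pg(\cV)$, so it embeds in $M_p(\mathbb{D})$ acting $\mathbb{D}$-linearly on the multiplicity space $\mathbb{D}^{\,p}$. Since the $\Ng(\cV)$-submodules of $\ag$ are exactly the $\cC(\cV)$-invariant $\mathbb{D}$-submodules of $\mathbb{D}^{\,p}$, the $\Ng(\cV)$-irreducibility of $\ag$ is equivalent to $\cC(\cV)$ acting $\mathbb{D}$-irreducibly on $\mathbb{D}^{\,p}$. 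It then remains a finite check: for each $\mathbb{D}\in\{\br,\bc,\bh\}$ and each of the possibilities $\cC(\cV)\in\{0,\so(2),\spg(1)\}$ of Lemma~\ref{l:red}\,\eqref{it:redCdiva}, one determines the (unique) value of $p$ for which such a $\mathbb{D}$-irreducible $\cC(\cV)$-action on $\mathbb{D}^{\,p}$ exists, and one verifies these are $p=1,2,4$ exactly as listed.

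The step I expect to be the main obstacle is the case analysis in part (c): one must pass carefully between the real module picture and the $U\otimes_{\mathbb{D}}\mathbb{D}^{\,p}$ picture in the presence of the real/complex/quaternionic subtleties of Schur's lemma, and confirm that the $\mathbb{D}$-irreducibility constraint on the $\cC(\cV)$-action determines $p$ in each of the finitely many configurations. Parts (a) and (b) are routine once the decomposition of $\so(\ag)$ along $L\oplus L^\perp$ is in place.
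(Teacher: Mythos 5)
Your proposal is correct and follows essentially the same route as the paper in all three parts: for (a) and (b) you use the block decomposition of $\so(\ag)$ along $L\oplus L^\perp$ and the pointwise GO condition exactly as the paper does (with somewhat more detail in (a), in particular the observation that $\ker(\pi_L|_\cV)$ is $\ad_N$-invariant so skew-symmetry passes to the pushed-forward inner product); and for (c) your $\End_{\Pg(\cV)}(\ag)\cong M_p(\mathbb{D})$ framing is the same content as the paper's identification of the centralizer of $\Pg(\cV)$ in $\so(\ag)$ with $\so(p)$, $\ug(p)$ or $\spg(p)$, followed by the same finite case-check on $\cC(\cV)\in\{0,\so(2),\spg(1)\}$ using the constraint of Lemma~\ref{l:red}\eqref{it:redCdiva} that its nonzero elements square to multiples of $\id_\ag$. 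Incidentally, carrying out the deferred check would also surface a typo in the statement: ``real type with $\cC(\cV)=0$'' appears under both $p=1$ and $p=2$; the $p=2$ entry should read $\cC(\cV)\cong\so(2)$, as the paper's own proof of (c) makes clear.
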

\begin{proof}
  For~\eqref{it:redV} we note that $\pi_L(\Ng(\cV))$ normalizes $\pi_L(\cV)$ and the action of the elements of $\pi_L(\Ng(\cV))$ on  $\pi_L(\cV)$ is skew-symmetric relative to $\ipr_L$. Moreover,
  replacing $J$  with $\pi_L(J)$ and $N$  with $\pi_L(N)$, the GO condition is preserved.
   The second claim follows by replacing $\Ng(\cV)$ with $\cC(\cV)$ in the previous argument. For the last claim, observe that if $J\in V$ and $X\in L$ with $JX=0$, then $J\in\so(\ag)$ is singular.

  For assertion~\eqref{it:redN}, let $L$ be an $\Ng(\cV)$-invariant subspace. Then for any   $X \in L$ and $J\in V$ there exists $N\in \Ng(\cV) $ such that $JX=NX \in L$ and hence $L$ is $V$-invariant.

  For part~\eqref{it:redPir}, assume that   the representation of $\Pg(\cV)$ contains non-isomorphic irreducible submodules. Then there exists a decomposition $\ag=\ag' \oplus \ag''$ into $\Pg(\cV)$-submodules such that no irreducible submodule of $\ag'$ is isomorphic to an irreducible submodule of $\ag''$. Since $\cC(\cV)$ commutes with  $\Pg(\cV)$, both subspaces $\ag'$ and $\ag''$ are $\cC(\cV)$-invariant. Hence they are also $\Ng(\cV)$-invariant, which contradicts part~\eqref{it:redN}.

 If  $\ag_i$ are of quaternionic type, then the centralizer of $\Pg(\cV)$ in $\so(\ag)$ is isomorphic to $\spg(p)$ and contains the subalgebra   $\cC(\cV)$. Since $\ag$ is $V$-irreducible, Lemma \ref{l:red} implies that  $\cC(\cV)$  lies in the sum of $p$ copies of the standard representation of $\spg(1)$ on $\br^4$. Thus the representation of $\Ng(\cV)=\Pg(\cV) \oplus \cC(\cV)$ is reducible, which contradicts assertion~\eqref{it:redN} unless $p=1$. Similarly, suppose $\ag_i$ is of complex type. The centralizer of $\Pg(\cV)$ in $\so(\ag)$ is isomorphic to $\ug(p)$ and contains a subalgebra $\cC(\cV)$. If $p=1$, then $\dim\cC(\cV) \le 1$. If $p > 1$, then $\cC(\cV)$ must be nontrivial. But if $\cC(\cV) \cong \so(2)$, or if $\cC(\cV) \cong \spg(1)$ and $p > 2$, then $\ag$ is $\Ng(\cV)$-reducible. Suppose $\ag_i$ is of real type. The centralizer of $\Pg(\cV)$ is isomorphic to $\so(p)$ and contains $\cC(\cV)$, and so in order for $\ag$ to be $\Ng(\cV)$-irreducible, we need to have either $p=1$ and then $\cC(\cV)=0$, or $p=2$ and  $\cC(\cV) \cong \so(2)$, or $p=4$ and then $\cC(\cV) \cong \spg(1)$.
\end{proof}

\subsection{GO subspaces of centralizer type}
\label{ss:centr}

In this section, we classify subspaces $\cV \subset \so(\ag)$ of centralizer type (without the assumption of being non-singular). Recall that this means that for any $J \in \cV$ and any $X \in \ag$ there exists $N \in \cC(\cV)$ such that $JX=NX$. Thus in this case any inner product is admissible. Clearly, if $\dim V=1$ or $2$, then any  $N\in \Ng(\cV)$ satisfying the GO condition with a non-zero $J\in V$ lies in the centralizer $ \cC(\cV)$ since $(\ad_N)_{|V}$ is skew-symmetric and contains $J$ in its kernel.  Hence any GO-pair with $\dim V=1, 2$ is of centralizer type, see~\cite[Section~6.2]{Lau}, \cite[Proposition~6]{Nik}.

Consider the following construction.  Choose an orthogonal decomposition of $\ag$ as $\ag=\bigoplus_{i=1}^pU_i\oplus\bigoplus_{j=1}^q W_j\oplus Z $ with $\dim U_i=4$ and $\dim W_j=2$.
Thus $\so(U_i)\simeq\so(4)\simeq \s_i\oplus\s_i'$ where $\s_i$ and $\s_i'$ are the two $\so(3)$ ideals of $\so(4)$. Define the subalgebra $\f \subset \so(\ag)$ by $\f=\bigoplus_{i=1}^p \s_i \oplus \bigoplus_{j=1}^q \so(W_j)$. We call any so constructed subalgebra $\f \subset \so(\ag)$ \emph{nice}.

\begin{proposition} \label{p:ct}
  A  GO subspace $\cV \subset \so(\ag)$ is of centralizer type if and only if it is a subspace of a nice subalgebra.
\end{proposition}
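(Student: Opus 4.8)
The plan is to prove both implications separately, with the ``if'' direction being essentially a verification and the ``only if'' direction being the substantive part. For the ``if'' direction, suppose $\cV$ is a subspace of a nice subalgebra $\f = \bigoplus_{i=1}^p \s_i \oplus \bigoplus_{j=1}^q \so(W_j)$. I would first observe that $\f$ acts on $\ag$ with each $\s_i$ acting on the $4$-dimensional block $U_i$ and each $\so(W_j)$ acting on the $2$-dimensional block $W_j$, and these actions commute blockwise. Given $J \in \cV \subset \f$ and $X \in \ag$, write $X$ and $J$ in block form. On each $2$-dimensional block $W_j$, the element $J$ restricts to a multiple of the standard rotation, and since $\so(W_j)$ is abelian, any element of $\so(W_j)$ commuting with $J|_{W_j}$ (in particular $J|_{W_j}$ itself) lies in the centralizer; more to the point I want to find $N \in \cC(\cV)$. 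The key structural fact is that for the $\s_i$-action on $U_i \cong \br^4$ (the standard $\su(2) \cong \spg(1)$ action via left quaternion multiplication), the centralizer inside $\so(4)$ of any single nonzero element is $3$-dimensional, namely the other $\so(3)$ summand $\s_i'$ together with the chosen element — and crucially, the $\s_i'$-summand lies in $\cC(\f) \supset \cC(\cV)$. So on each block, given $J|_{U_i}X_i$, I need $N_i$ with $N_i$ in the centralizer of all of $\cV$ and $N_iX_i = J|_{U_i}X_i$; this is exactly the statement that the $\spg(1) = \s_i'$ action (which centralizes $\cV$) is transitive on spheres in $\br^4$ together with the fact that $J|_{U_i}X_i$ has the same norm as $X_i$. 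Assembling the $N_i$ over blocks gives $N \in \cC(\cV)$ with $NX = JX$ and $[N,\cV]=0$ automatically. (If some $J|_{U_i} = 0$ one takes $N_i = 0$; a similar comment handles blocks where $X$ has zero component.) Thus $\cV$ is of centralizer type.

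For the ``only if'' direction, suppose $(\cV,\ip)$ is a GO subspace of centralizer type. First, by Remark~\ref{rem:ipretc}\eqref{it:iprcommonker} and Lemma~\ref{l:redst}\eqref{it:redV}, I may assume $\cV$ has no common kernel on $\ag$ (the common kernel, if any, contributes to the $Z$-summand of a nice subalgebra trivially, so we can restore it at the end). Decompose $\ag = \bigoplus_{k} \ag_k$ into $\cV$-irreducible subspaces. By Lemma~\ref{l:redst}\eqref{it:redV}, each $\pi_k(\cV) \subset \so(\ag_k)$ is again of centralizer type, so it suffices to handle the irreducible case and then show the blocks fit into the nice pattern — here I need Lemma~\ref{l:red}\eqref{it:redCij} to control how centralizer elements can mix distinct irreducible blocks, which forces paired blocks of equal dimension with ``equal'' $\cV$-action, matching the $\so(3)\oplus\so(3) = \so(4)$ structure or giving extra identified copies that still sit inside a nice subalgebra. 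So the heart of the matter is: if $\ag$ is $\cV$-irreducible and $\cV$ is of centralizer type (no common kernel, so $\cV \ne 0$), then $\dim \ag \in \{2,4\}$ and $\cV$ sits inside $\so(W)$ (if $\dim \ag = 2$) or inside one $\so(3)$-factor of $\so(4)$ (if $\dim\ag = 4$).

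For that core claim I would argue as follows. Fix $0 \ne J \in \cV$. Since $\cV$ is non-singular is \emph{not} assumed, $J$ may have a kernel, but by $\cV$-irreducibility applied to $\bigcap_{J'}\Ker J'$ and the no-common-kernel assumption, the $\cV$-action is nontrivial. The centralizer-type condition says $\cC(\cV)$ acts transitively on each $\cV$-orbit $\{$elements of the form $JX\}$ — more precisely for each $X$, $JX \in \cC(\cV)X$. Taking $X$ in the image of $J$ and using that $\cC(\cV)$ is a compact Lie algebra of skew-symmetric matrices whose connected group acts on $\ag$, I get that $\cC(\cV)$ acts with orbits large enough to contain, for every $X$, the vector $JX$ of the same norm. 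Running this over all $J \in \cV$ and combining with $\cV$-irreducibility, one shows $\m(\cV) \subset \cC(\cV) + \text{(something small)}$; more efficiently, one uses that $\cC(\cV)\oplus\br\,\id$ on an irreducible $\cV$-module is a division algebra by Lemma~\ref{l:red}\eqref{it:redCdiva}, so $\cC(\cV) \in \{0,\so(2),\spg(1)\}$. If $\cC(\cV) = 0$ the centralizer-type condition with $0 \ne J$ forces $JX = 0$ for all $X$, i.e.\ $J = 0$, a contradiction; so $\cC(\cV) \ne 0$. If $\cC(\cV) \cong \so(2)$, it is the span of a complex structure $J_0$ on $\ag$ and the centralizer-type condition gives, for each $J \in \cV$ and each $X$, a real $t = t(J,X)$ with $JX = t J_0 X$; a standard argument (the function $X \mapsto t(J,X)$ is locally constant on $\{JX \ne 0\}$, hence constant, since $\ag$ is $\cV$-irreducible of dimension $> 2$ would contradict this unless $J = tJ_0$) forces $\dim\ag = 2$ and $\cV \subset \so(2) = \so(W)$. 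If $\cC(\cV) \cong \spg(1)$, it gives $\ag$ a quaternionic structure; by Lemma~\ref{l:redst}\eqref{it:redPir} and Lemma~\ref{l:red}\eqref{it:redCdiva}, $\cV \subset \spg(\ag_i)$-type constraints apply, and transitivity of $\cC(\cV) = \spg(1)$ on spheres in the relevant blocks shows each irreducible piece is $4$-dimensional with $\cV$ landing in the opposite $\so(3)$, i.e.\ some $\s_i$. Restoring the earlier common kernel as the $Z$-summand and the block identifications from Lemma~\ref{l:red}\eqref{it:redCij} as extra $U_i$ or $W_j$ blocks completes the construction of a nice $\f \supset \cV$.

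The main obstacle I anticipate is the bookkeeping in the ``only if'' direction where distinct $\cV$-irreducible blocks are linked by off-diagonal centralizer elements (Lemma~\ref{l:red}\eqref{it:redCij}): one must check that the pattern of identifications is exactly the one that assembles $4$-dimensional blocks with a full $\so(3)$ centralizer into $\so(4)$-blocks realizing a nice subalgebra, rather than something larger. The cleanest route is probably to first reduce, via the no-common-kernel normalization and Lemma~\ref{l:redst}\eqref{it:redV}, to the case where $\cV$ is non-singular on each block (discarding kernels), classify the irreducible non-singular centralizer-type pieces as above, and then re-glue, using that the only way for $\cC(\cV)$ to fail to act irreducibly on $\ag$ (after removing common kernel) while $\cV$ remains centralizer type is the block-doubling of Lemma~\ref{l:red}\eqref{it:redCij}, which is precisely absorbed into the $\bigoplus \so(U_i) \oplus \bigoplus \so(W_j)$ shape.
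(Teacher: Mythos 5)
Your ``if'' direction matches the paper's: the auxiliary algebra $\f' = \bigoplus_{i} \s_i' \oplus \bigoplus_{j} \so(W_j)$ lies in $\cC(\cV)$, and surjectivity of $Q\mapsto QX$ from $\s_i'$ onto $X^\perp$ on each $4$-dimensional block $U_i$ supplies the needed centralizer element.

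The ``only if'' direction takes a genuinely different route from the paper's, and it contains a gap. The paper's pivotal step, which you do not use, is the reduction to the case where $\cV$ is a \emph{subalgebra}: one shows that $\cV$ is GO of centralizer type if and only if $\m(\cV)$ is, via $\cC(\cV)=\cC(\m(\cV))$ in one direction and the identity $[J_1,J_2]X=[N_2,N_1]X$ (when $J_iX=N_iX$, $N_i\in\cC(\cV)$) in the other. After this, on each $\cV$-irreducible $L$ with $\dim L>2$, the centralizer-type condition gives $(J')^2=\mu\,\id$ on $L$ for each $J'\in\pi_L(\cV)$, so $L$ is a Clifford module; since $\pi_L(\cV)$ is now a reductive subalgebra one shows it has rank one, hence is $\cong\spg(1)$ and $\dim L=4$. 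You bypass this reduction and instead classify $\pi_L(\cV)$ directly via $\cC(\pi_L(\cV))\in\{0,\so(2),\spg(1)\}$. That can work, but your $\cC\cong\spg(1)$ case is where the argument breaks. You write that ``transitivity of $\cC(\cV)=\spg(1)$ on spheres \ldots shows each irreducible piece is $4$-dimensional,'' but the centralizer-type condition only says $JX\in\cC(\cV)X$ --- a subspace of dimension at most $3$ --- and says nothing about transitivity on $S^{4m-1}$; for $m>1$ the group $\Sp(1)$ is certainly not transitive on that sphere, so you are implicitly assuming $m=1$, the very thing to be proved. A correct replacement, if you insist on avoiding the subalgebra reduction, is a quaternionic Schur argument: identify $L\cong\bh^m$ with $\cC(\pi_L(\cV))$ acting by right scalar multiplication, note $J\in\pi_L(\cV)$ is right-$\bh$-linear, so the condition $JX=X\,c(X)$ with $c(X)\in\Im\bh$ and a comparison of $X$, $Y$, $X+Y$ for $\bh$-independent $X,Y$ (available when $m>1$) forces $c$ to be constant, hence $\pi_L(\cV)\subset\cC(\pi_L(\cV))$, contradicting $\cV$-irreducibility of $L$. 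This is a missing step, not mere bookkeeping. The $\so(2)$ case is phrased loosely (``locally constant, hence constant'') but the underlying complex-Schur observation is sound and easy to repair. Finally, your closing worry about ``gluing'' across blocks is unnecessary: once each $\cV$-irreducible $L$ has $\pi_L(\cV)$ inside an $\so(W_j)$ or an ideal $\s_i\subset\so(L)$, set $\f$ to be the direct sum of those summands; then $\cV\subset\bigoplus_L\pi_L(\cV)\subset\f$ automatically, and the off-diagonal centralizer pieces $\cC_{ij}$ play no role in the containment.
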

\begin{proof}
For the ``if" part, observe that if $\cV\subset \f$, then  $\cC(\cV)$ contains  the subalgebra $\f'=\bigoplus_{i=1}^p \s_i' \oplus \bigoplus_{j=1}^q \so(W_j)$.  To show that $\cV$ is of centralizer type it is sufficient to check that for any $A \in \s_i$ and any $X \in U_i$ there exists a $Q \in \s_i'$ such that $AX=QX$. But $AX \perp X$, and for $X \ne 0$, the map $Q \mapsto QX$ from $\s_i'$ to $X^\perp$ is surjective, as $Q^2$ is a multiple of the identity for any nonzero $Q \in \s_i'$.

To prove the ``only if" part, we first claim  that $\cV$ is a GO subspace of centralizer type if and only if $\m(\cV)$ is, where  $\m(\cV) \subset \so(\ag)$ is the subalgebra generated by $\cV$. In one direction, this follows from the fact that $\cV$ and $\m(\cV)$ have the same centralizer. In the other direction, suppose that $\cV$ is GO of centralizer type. Let $J_1, J_2 \in \cV, \; X \in \ag$, and let $N_1, N_2 \in \cC(\cV)$ be such that $J_1X=N_1X$ and $J_2X=N_2X$. Then $[J_1,J_2]X = J_1N_2X-J_2N_1X = N_2J_1X-N_1J_2X = N_2N_1X-N_1N_2X = [N_2, N_1]X$, and so $\cV + [\cV,\cV]$ is a again a GO subspace of centralizer type. Repeating the argument we find that $\m(\cV)$ is also a subspace of centralizer type.

It is therefore sufficient to prove the ``only if" part under the assumption that $\cV \subset \so(\ag)$ is a subalgebra. We construct the nice algebra $\f$ containing $\cV$ as follows. Let $L \subset \ag$ be an invariant, $\cV$-irreducible subspace. If $\dim L =1$, it is a trivial $\cV$-module; the sum of those gives us the subspace $Z$ in the definition of $\f$. The irreducible subspaces $L$ of dimension $2$ are the subspaces $W_j$ in the definition of $\f$. Suppose $\dim L > 2$. By Lemma~\ref{l:redst}\eqref{it:redV} the projection $\pi_L(\cV) \subset \so(L)$ is still GO of centralizer type (and is a subalgebra). Its centralizer $\cC_L \subset \so(L)$ can be of one of three types given in Lemma~\ref{l:red}\eqref{it:redCdiva}. For any choice of $J' \in \pi_L(\cV)$ and any $X \in L$ there exists a $C \in \cC_L$ such that $CX=J'X$. It follows that $\cC_L$ cannot be trivial, and moreover, $(J')^2X=C^2X=\mu X$ for some $\mu \le 0$. Hence $(J')^2$ is a multiple of the identity, for all $J' \in \pi_L(\cV)$, hence $L$ is a Clifford module. Now if $[J_1',J_2']=0$ for some nonzero, orthogonal $J_1',J_2' \in \pi_L(\cV)$, then $0=J_1'J_2'- J_2'J_1'=J_1'J_2'+ J_1'J_2'$, and so $J_1'J_2'=0$ contradicting the fact that $(J')^2$ is a nonzero multiple of the identity for all nonzero $J' \in \pi_L(\cV)$. It follows that the (reductive) algebra $\pi_L(\cV)$ has rank one. It cannot be $1$-dimensional, as $\dim L > 2$, and hence $\pi_L(\cV)$ is isomorphic to $\spg(1)$. Then $L$ is a $\mathrm{Cliff}(2)$-module and so $\dim L = 4$ since $L$ is also irreducible. Thus $\pi_L(\cV)$ is the image of the defining representation of $\spg(1)$, and hence one of the ideals in $\so(4)$. Such a subspace $L$ is a subspace $U_i$ in the definition of the nice algebra $\f$.
\end{proof}
As a simple consequence, we can now describe the classification of non-singular GO-pairs (or GO subspaces) of centralizer type. For this, let  $\ag=\bigoplus_{i=1}^pU_i$ with $\dim U_i=4$ and  $\so(U_i)\simeq\so(4)\simeq \s_i\oplus\s_i'$  as above.
\begin{corollary} \label{c:GOct}
	If $(\cV, \ipr)$ is a non-singular GO-pair of centralizer type, then it belongs to one of the following:
	\begin{enumerate}[label=\emph{(\alph*)},ref=\alph*]
		\item \label{it:dim1}
		If $\dim V=1$, then $V$ is spanned by a non-singular skew-symmetric matrix.
		
		\item \label{it:dim23}
        If $\dim V \in \{2, 3\}$, then $V \subset \bigoplus_{i=1}^p \s_i$ and every projection $\pi_i:V \to \s_i$ is injective.
	\end{enumerate}
\end{corollary}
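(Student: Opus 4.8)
The goal is to prove Corollary~\ref{c:GOct}, classifying non-singular GO-pairs of centralizer type. The plan is to run everything through Proposition~\ref{p:ct}, which tells us that such a $\cV$ sits inside a nice subalgebra $\f = \bigoplus_{i=1}^p \s_i \oplus \bigoplus_{j=1}^q \so(W_j) \oplus$ (trivial summand on $Z$), and then impose non-singularity to cut the list down. First I would observe that if $\cV \subset \f$ is non-singular as a subspace of $\so(\ag)$, then in particular no nonzero element of $\cV$ can annihilate any of the trivial summands in $Z$, so $Z = 0$; and similarly, looking at the block structure, an element of $\f$ restricted to a single $2$-dimensional block $\so(W_j)$ is a single rotation generator, which does vanish on the complementary blocks unless $q = 0$ and $p = 1$ forces... here I need to be careful. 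The cleanest argument: a nonzero $J \in \cV \subset \f$ decomposes as $J = \sum_i A_i + \sum_j B_j$ with $A_i \in \s_i$, $B_j \in \so(W_j)$; for $J$ to be invertible on $\ag = \bigoplus U_i \oplus \bigoplus W_j$ (with $Z = 0$) we need every $A_i$ invertible on $U_i$ and every $B_j \neq 0$ on $W_j$. Since nonzero elements of $\s_i \cong \so(3)$ acting on $\br^4$ by the defining quaternionic representation are automatically invertible, the condition ``$J$ invertible'' is equivalent to ``each projection $\pi_{U_i}(J) \neq 0$ and each $\pi_{W_j}(J) \neq 0$''.

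Next I would handle the $\dim V = 1$ case~\eqref{it:dim1}: this is immediate and essentially tautological — $V = \br J$ for a single $J$, and non-singularity says exactly that $J$ is a non-singular skew-symmetric matrix; no appeal to Proposition~\ref{p:ct} is even needed here. For~\eqref{it:dim23} with $\dim V \in \{2,3\}$, I would argue as follows. We know $V \subset \f = \bigoplus_i \s_i \oplus \bigoplus_j \so(W_j)$ with $Z = 0$. I want to rule out the $2$-dimensional blocks $W_j$. Suppose $q \geq 1$. The projection $\pi_{W_j} \colon V \to \so(W_j) \cong \br$ is a linear map from a space of dimension $2$ or $3$ to a line, hence has a kernel of dimension $\geq 1$; any nonzero $J$ in that kernel then annihilates $W_j$, contradicting non-singularity. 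Hence $q = 0$, so $\f = \bigoplus_{i=1}^p \s_i$ and $\ag = \bigoplus_{i=1}^p U_i$ with each $U_i$ of dimension $4$, giving $V \subset \bigoplus_{i=1}^p \s_i$. Finally, for each fixed $i$, the projection $\pi_i \colon V \to \s_i$ must be injective: if $J \in \ker \pi_i$ is nonzero, then $J$ annihilates $U_i$, again contradicting non-singularity. This finishes both cases.

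I do not expect a serious obstacle here — the corollary really is a routine consequence of Proposition~\ref{p:ct} together with the elementary remark that a nonzero element of $\so(3)$ acts invertibly on the quaternions, plus a dimension count for the projections to the rank-one pieces. The one point requiring a little care is the claim that invertibility of $J \in \f$ is equivalent to non-vanishing of each block projection: this uses that $\s_i$ acts on $U_i = \br^4$ by (a copy of) the standard $\Sp(1)$-representation, where $J \mapsto J^2$ is a negative multiple of the identity for every nonzero $J$, exactly as in the proof of the ``if'' part of Proposition~\ref{p:ct} and in Lemma~\ref{l:red}\eqref{it:redCdiva}. Once that is in hand, everything reduces to linear algebra on the projections $\pi_i$ and $\pi_{W_j}$, and the statement that $\dim V \le 3$ never actually enters except to guarantee that the projections to the lines $\so(W_j)$ have nontrivial kernel (for $\dim V \ge 2$). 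I would present the proof in roughly the order: (i) reduce to $\cV \subset \f$ nice via Proposition~\ref{p:ct}; (ii) dispose of $\dim V = 1$; (iii) kill the trivial summand $Z$ and the $2$-dimensional blocks $W_j$ by non-singularity; (iv) conclude injectivity of $\pi_i$ on each $\s_i$.
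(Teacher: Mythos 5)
Your proof is correct and follows essentially the same route as the paper: invoke Proposition~\ref{p:ct} to place $V$ inside a nice subalgebra $\f$, then use non-singularity to force each block projection of $V$ to be injective, which kills the summands $Z$ and $W_j$ (whose $\so$-part has dimension $0$ or $1$) and gives injectivity of each $\pi_i$. The only cosmetic difference is that you rederive directly the fact that $\pi_L \colon V \to \pi_L(V)$ is injective when $V$ is non-singular, whereas the paper cites Lemma~\ref{l:redst}\eqref{it:redV}, whose proof is the same one-line observation you make.
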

This, together with the fact that the inner product on $V$ is arbitrary, gives a large class of GO nilmanifolds of centralizer type. The examples with $\dim V=2$ were discovered by  Y.Nikonorov in \cite{Nik}.
\begin{proof}
	The claim for $\dim V=1$ is clear. If $L$ is one of the $V$-invariant subspaces in the definition of $\f$, then $\pi_L(V)$ and $V$ have the same dimension. Hence if $1 < \dim V\le 3$ then $W_j=Z=\{0\}\}$.
\end{proof}
\smallskip

The following lemma gives some  sufficient conditions for a GO subspace to be of centralizer type, which will be useful for us later on.

\begin{lemma} \label{l:forc}
    Let $(\cV, \ipr)$ be a GO-pair, where $\cV \subset \so(\ag)$. In each of the following cases:
    \begin{enumerate}[label=\emph{(\alph*)},ref=\alph*]
        \item \label{it:large}
        The principal isotropy subalgebra of the action of $\Pg(\cV)$ on $\cV$ is trivial,

        \item \label{it:cpab}
        The pure normalizer $\Pg(\cV)$ is abelian,
  \end{enumerate}
  the subspace $\cV$ is of centralizer type.
\end{lemma}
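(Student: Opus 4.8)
The plan is to exploit the orthogonal ideal splitting $\Ng(\cV)=\cC(\cV)\oplus\Pg(\cV)$ together with $[\cC(\cV),\Pg(\cV)]=0$ from Lemma~\ref{l:sum}\eqref{it:CgCg}. Given $J\in\cV$, $X\in\ag$ and a normalizer $N=N(J,X)\in\Ng(\cV)$ furnished by the GO condition, write $N=N_0+N_1$ with $N_0\in\cC(\cV)$ and $N_1\in\Pg(\cV)$. Since $N_0$ centralizes $\cV$ we have $[N_0,J]=0$, so $[N,J]=[N_1,J]$; thus the relation $[N,J]=0$ is equivalent to $[N_1,J]=0$, i.e.\ $N_1$ lies in the isotropy subalgebra at $J$ of the (faithful) action of $\Pg(\cV)$ on $\cV$.

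First I would treat case~\eqref{it:large}. The group generated by $\Pg(\cV)$ acts properly on $\cV$, so the principal isotropy Lemma applies and the set $\cV^{\mathrm{reg}}\subset\cV$ of points whose isotropy subalgebra is trivial is open and dense. For $J\in\cV^{\mathrm{reg}}$ the observation above forces $N_1=0$, hence $N=N_0\in\cC(\cV)$ and $NX=JX$; so the centralizer-type GO condition already holds for every $J\in\cV^{\mathrm{reg}}$ and every $X\in\ag$. To pass to an arbitrary $J_0\in\cV$, fix $X\in\ag$ and choose $J_k\in\cV^{\mathrm{reg}}$ with $J_k\to J_0$. For each $k$ we have $J_kX\in\cC(\cV)X:=\{CX\mid C\in\cC(\cV)\}$, which is the image of a linear map, hence a closed linear subspace of $\ag$; therefore $J_0X=\lim_k J_kX\in\cC(\cV)X$, i.e.\ $J_0X=N_0X$ for some $N_0\in\cC(\cV)$. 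As $X$ and $J_0$ were arbitrary, $\cV$ is of centralizer type.

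For case~\eqref{it:cpab} I would reduce to~\eqref{it:large}. Since $\Pg(\cV)$ is abelian and acts faithfully on $\cV$ by skew-symmetric operators, it embeds in a maximal torus of $\so(\cV)$, so $\cV=\cV_0\oplus\bigoplus_\alpha\cV_\alpha$ splits into the fixed subspace $\cV_0$ and two-dimensional weight spaces $\cV_\alpha$ with nonzero weights $\alpha\in\Pg(\cV)^*$, on which $M\in\Pg(\cV)$ acts as the rotation generator scaled by $\alpha(M)$. For $J=J_0+\sum_\alpha J_\alpha$ with all $J_\alpha\neq0$ the isotropy subalgebra equals $\bigcap_\alpha\ker\alpha$, and any element of this intersection acts trivially on every $\cV_\alpha$ and on $\cV_0$, hence on all of $\cV$; by faithfulness it is $0$. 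Thus the principal isotropy subalgebra is trivial and~\eqref{it:large} applies.

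The only delicate point is the transition from regular $J$ to all $J$ in case~\eqref{it:large}; this is dispatched by noting that $\cC(\cV)X$ is a closed linear subspace of $\ag$, so no boundedness of the approximating normalizers $N_0=N_0(J_k,X)$ is needed. Everything else is bookkeeping with the ideal decomposition of $\Ng(\cV)$ and the standard structure of torus actions on a Euclidean space.
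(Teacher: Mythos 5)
Your proof is correct and follows essentially the same route as the paper: reduce to showing that for regular $J$ the normalizer element supplied by the GO condition must lie in $\cC(\cV)$, then pass to arbitrary $J$ by density, and dispose of~\eqref{it:cpab} by observing that a faithful abelian action has trivial principal isotropy. The paper states both the density step and the abelian-torus fact without elaboration; your closedness argument for $\cC(\cV)X$ and your explicit weight-space decomposition are correct fillings-in of those steps, not a genuinely different proof.
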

\begin{proof} Since the Lie group $\sP$ with Lie algebra $\Pg(\cV)$ acts properly on $V$,  the set of regular points $V_{reg}\subset V$ is open and dense. Thus if $J\in V_{reg}$, the Lie algebra of the principle isotropy group, which is given by $\{N\in \Pg(\cV)\mid [N,J]=0 \}$, is trivial. Thus it lies in the ineffective kernel $\cC(\cV)$ and hence  $[N,J]=0$ implies that  $N\in \cC(\cV)$. Thus  for any $X \in \ag$ we have $JX \in \{NX \,| \, N \in \cC(\cV)\}$. As this is satisfied for a generic $J \in \cV$, it is satisfied for an arbitrary $J \in \cV$, and so $\cV$ is of centralizer type.

Assertion~\eqref{it:cpab} follows from the fact that the principle isotropy group of an effective action of an abelian group is trivial.
\end{proof}
Note that the cases where the action of $\sP$ is not almost free, i.e. the principle isotropy group is not finite, are relatively rare. The classification of such actions of a semisimple compact Lie group is given in \cite{HH}. For example, if $\sP$ is  simple, the list contains only the defining representations of the classical Lie groups $\SO(n), \SU(n), \Sp(n)$, the adjoint representations, some $s$-representations, and a small number of exceptional ones.
Thus, if the GO-pair $(\cV, \ipr)$ is not of centralizer type, the representation of $\Pg(\cV)$ on $V$ is already quite special.

Another useful application of GO subspaces of centralizer type is as follows.

\begin{lemma} \label{l:W}
Let $(\cV,\ipr)$ be a GO-pair and let $J \in V_{reg}$ be a regular element of the action of $\sP$ on $V$. Denote $\cC(J)$ the centralizer of $J$ in $\Ng(\cV)$ and $W \subset \cV$ the subspace of elements which are centralized by $\cC(J)$. Then:

\begin{enumerate}[label=\emph{(\alph*)},ref=\alph*]
    \item \label{it:Wct}
    The subspace $W \subset \so(\ag)$ is a GO subspace of centralizer type.

    \item \label{it:Worbit} % Lie  \subset \SO(\cV)
    The orbit of $W$ under the action of the group $\sP$ on $\cV$ is the whole $\cV$, and $\sP$ acts on $\cV$ with cohomogeneity $\dim W$.
\end{enumerate}
Under additional assumption that $V$ is non-singular we have the following:
\begin{enumerate}[label=\emph{(\alph*)},ref=\alph*,resume]
  \item \label{it:W123}
  The group $\sP$ acts on $\cV$ with cohomogeneity $\dim W \in \{1,2,3\}$.

  \item \label{it:Wsimple}
  If $J$ has a simple eigenvalue, then  $W= \br J$ and $\sP$ acts transitively (and almost effectively) on the unit sphere of $\cV$.
\end{enumerate}
\end{lemma}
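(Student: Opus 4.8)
The plan is to treat the four assertions in order, using the principal isotropy machinery set up in Section~\ref{ss:nc} together with Proposition~\ref{p:ct}. Fix a regular element $J \in \cV_{reg}$ for the $\sP$-action on $\cV$, where $\sP$ is the (proper) group with Lie algebra $\Pg(\cV)$. Write $\cC(J)$ for the centralizer of $J$ in $\Ng(\cV)$; since $J$ is regular, the argument of Lemma~\ref{l:forc}\eqref{it:large} shows that $\cC(J)\subset\Ng(\cV)$ actually lies in $\cC(\cV)$ together with the Lie algebra of the (finite, by properness of a principal isotropy) isotropy group — more precisely, modulo $\cC(\cV)$ its Lie algebra is the principal isotropy subalgebra of $\Pg(\cV)$ acting on $\cV$, which may be nontrivial in general. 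The subspace $W=\{A\in\cV \mid [C,A]=0 \text{ for all } C\in\cC(J)\}$ is exactly the fixed-point set of $\cC(J)$ acting on $\cV$; it contains $J$, so it is nonzero.

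For assertion~\eqref{it:Wct}: I would show that the GO condition for $(\cV,\ipr)$, when restricted to $J'\in W$, can be satisfied with $N(J',X)$ lying in $\cC(W)$. Given $J'\in W$ and $X\in\ag$, pick $N=N(J',X)\in\Ng(\cV)$ with $[N,J']=0$, $NX=J'X$. The key point is an averaging/genericity argument: since $J$ is a regular point of the $\sP$-action, and $W$ is the fixed set of $\cC(J)$, one can arrange (by choosing $N$ appropriately, or by replacing $N$ by a suitable element of its $\cC(J)$-orbit closure / by projecting onto the $\cC(J)$-fixed part) that $N$ also commutes with $\cC(J)$, hence $N$ normalizes $W$ and its ad-action on $W$ is skew-symmetric; then apply Lemma~\ref{l:generic}-type reasoning inside $W$ together with the fact that a regular $J$ forces $N$ into a Cartan — and since $W$ is built from the centralizer data, the relevant $N$ ends up centralizing all of $W$. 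Thus $W$ satisfies the GO condition of centralizer type. For~\eqref{it:Worbit}: the fixed-point set of a principal isotropy subgroup meets every principal orbit, and a standard slice/principal-isotropy argument (the "principal isotropy Lemma" invoked in Section~\ref{ss:nc}) gives that the $\sP$-saturation of $W$ is dense, hence all of $\cV$ by closedness of $\cV$; counting dimensions, the cohomogeneity of the $\sP$-action on $\cV$ equals $\dim W$.

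For~\eqref{it:W123} and~\eqref{it:Wsimple}, now assume $\cV$ is non-singular. By~\eqref{it:Wct}, $W$ is a non-singular (by Lemma~\ref{l:redst}\eqref{it:redV}, restricting to $\cV$-invariant pieces preserves non-singularity, and $W\subset\cV$) GO subspace of centralizer type; by Proposition~\ref{p:ct} it sits inside a nice subalgebra $\f=\bigoplus\s_i\oplus\bigoplus\so(W_j)$, and non-singularity kills the $\so(W_j)$ and $Z$ summands (as in Corollary~\ref{c:GOct}), so $W\subset\bigoplus_{i=1}^p\s_i$ with each projection $\pi_i\colon W\to\s_i\cong\so(3)$ injective; hence $\dim W\le 3$. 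This gives~\eqref{it:W123}. For~\eqref{it:Wsimple}, suppose $J$ has a simple (real) eigenvalue, i.e. a $1$-dimensional eigenspace for $J^2$ on which $J$ acts with a multiplicity-one block; then any $A\in W$ commuting appropriately with the Cartan determined by $J$ must preserve that block, and combined with $W\subset\bigoplus\s_i$ and the block structure forced by regularity, one deduces $W=\br J$. Then cohomogeneity one means $\sP$ acts transitively on the unit sphere of $\cV$, and almost effectiveness follows since the ineffective kernel $\cC(\cV)$ acts trivially on $\cV$ by definition while $\Pg(\cV)$ acts faithfully on $\cV$.

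The main obstacle I anticipate is assertion~\eqref{it:Wct}: making rigorous the claim that the normalizing element $N$ furnished by the GO condition for $J'\in W$ can be taken to centralize $W$ (equivalently, to lie in $\cC(W)$ rather than merely in $\Ng(W)$). This is where the regularity of $J$ must be used decisively — via the structure of centralizers of regular elements in $\so(\ag)$ (Cartan subalgebras) as in Lemma~\ref{l:generic} — to show that the "normalizer freedom" available to $W$ is already exhausted by its centralizer. The dimension bounds in~\eqref{it:W123} and the identification in~\eqref{it:Wsimple} are then comparatively routine once Proposition~\ref{p:ct} and Corollary~\ref{c:GOct} are in hand.
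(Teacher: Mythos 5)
Your plan for assertions (b)--(d) is broadly aligned with the paper's, but your treatment of assertion~(a) — which you correctly flag as the crux — takes a route that does not work, and the paper's actual argument is both different and much simpler.

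For~(a), you start with an \emph{arbitrary} $J' \in W$ and a normalizer element $N = N(J',X) \in \Ng(\cV)$ from the GO condition, and you propose to "fix" $N$ by averaging over $\sC(J)$, or by projecting $N$ onto the $\cC(J)$-fixed part of $\Ng(\cV)$. Neither modification preserves the crucial equation $NX = J'X$: replacing $N$ by $gNg^{-1}$ with $g \in \sC(J)$ turns the equation into $gN(g^{-1}X) = J'X$, which is a statement about the different point $g^{-1}X$, and averaging destroys the equation entirely. Appealing to Lemma~\ref{l:generic} also does not help: that lemma concerns elements $J$ that are regular \emph{as matrices in $\so(\ag)$}, whereas here $J$ is only assumed regular for the action of $\sP$ on $\cV$ — a much weaker condition. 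The paper sidesteps all of this by choosing the \emph{test element} wisely rather than trying to modify $N$ after the fact. Because $J$ is a regular point of the $\sP$-action, for $J' \in W$ \emph{sufficiently close to} $J$ one has $\cC(J')=\cC(J)$ (the isotropy is locally constant on the slice, and $W$ near $J$ lies in the slice by the principal orbit/slice theorem). Consequently, for such $J'$, any $N$ satisfying~\eqref{eq:NJ} and~\eqref{eq:NXJX} automatically lies in $\cC(J')=\cC(J)$, and $\cC(J)$ centralizes $W$ by the very definition of $W$. One then fixes $X$, picks a basis $\{J_s\}$ of $W$ of such nearby elements, obtains $N_s \in \cC(J) \subset \cC(W)$ with $N_sX=J_sX$, and extends linearly in $J' \in W$. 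No averaging, no projection, no invocation of Lemma~\ref{l:generic}. (A minor additional confusion in your setup: you describe the principal isotropy as "finite, by properness", but a proper action can have positive-dimensional principal isotropy — you then contradict this yourself by saying the principal isotropy subalgebra "may be nontrivial in general". Only the latter is correct.)

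For~(d), your block-structure/Cartan argument gestures at the conclusion but never actually pins down $W = \br J$. The paper's argument is short and sharp: if $\dim W \in \{2,3\}$, then by Corollary~\ref{c:GOct}\eqref{it:dim23} $W$ sits inside $\bigoplus_i \s_i$ with each $\s_i \cong \so(3) \subset \so(4)$, and every nonzero element of such a subspace squares to a scalar on each $U_i = \br^4$, hence has all eigenvalues of multiplicity at least two. Thus an element of $W$ with a simple eigenvalue forces $\dim W = 1$, so $W = \br J$ (using $J \in W$), and~(c) then gives cohomogeneity one, i.e.\ transitivity of $\sP$ on the unit sphere; almost effectiveness is immediate since $\Pg(\cV)$ acts faithfully on $\cV$.
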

\begin{proof}
  \eqref{it:Wct} Notice that all the elements $J' \in W$ which are close to $J$ have the same centralizer $\cC(J')=\cC(J)$ in $\Ng(\cV)$. Choose a basis $\{J_s\}$ for $W$ made of such elements, and fix $X \in \ag$. By~\eqref{eq:NXJX}, for any $s=1, \dots, \dim W$, we can find an element $N_s \in \cC(J)$ which satisfies $N_sX=J_sX$, and so for an arbitrary $J' \in W$, an element $N \in \cC(J)$ which satisfies~\eqref{eq:NXJX} can be constructed by linearity. Hence $W$ is a GO subspace of centralizer type.

  \eqref{it:Worbit} follows from the Principal Orbit theorem and assertion~\eqref{it:Wct}.

  \eqref{it:W123} follows from Corollary~\eqref{c:GOct}, as $W$ is non-singular.

  \eqref{it:Wsimple} From Corollary~\eqref{c:GOct}\eqref{it:dim23}, no element of $W$ has a simple eigenvalue when $\dim W =2, 3$. So $\dim W =1$, and hence by~\eqref{it:W123} the group $\sP$ acts on $V$ with cohomogeneity $1$.
\end{proof}

\subsection{GO-pairs of Clifford type}
\label{ss:cliff}

We say that a subspace $\cV \subset \so(\ag)$ is of \emph{Clifford type} (also called \emph{H-type}), if $J^2$ it a multiple of the identity $\id_{\ag}$ for any $J \in \cV$. Clearly, any such subspace $\cV$ is non-singular and is spanned by $\dim \cV$ pairwise anticommuting complex structures which can be constructed from representations of the corresponding Clifford algebra on $\ag$.

We say that a GO-pair $(\cV,\ipr)$ is of \emph{Clifford type}, if the subspace $\cV$ of Clifford type. GO-pairs of Clifford type are classified in \cite{Rie} for the standard inner product, and in \cite[Theorem~5.10]{Lau} for an arbitrary inner product $\ipr$:

\begin{theorema} [{\cite{Rie},\cite[Theorem~5.10]{Lau}}] \label{tha:Cliff}
  Let a subspace $\cV \subset \so(\ag)$ be of Clifford type. A GO-pair $(\cV, \ipr)$ is of Clifford type precisely in the following cases.
  \begin{enumerate}[label=\emph{(\alph*)},ref=\alph*]
    \item \label{it:Cliff123}
    $\dim \cV \in \{1,2,3\}$ and $\ipr$ is arbitrary.

    \item \label{it:Cliff567}
    $\dim \cV \in \{5,6,7\}, \; \dim \ag = 8$, and $\ipr$ is such that its restriction to a $5$-dimensional subspace of $V$ is standard.

    \item \label{it:Cliff73}
    $\dim \cV = 7, \; \dim \ag \in \{16, 24\}$, the inner product $\ipr$ is standard, and $\cV$ is spanned by $7$ pairwise anticommuting complex structures $K_i$ such that $K_1 \dots K_7 = \pm \id_\ag$.
  \end{enumerate}
\end{theorema}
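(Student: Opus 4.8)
The plan is to split the argument in two: since a pair $(\cV,\ipr)$ that is a GO-pair for \emph{some} inner product is automatically a GO subspace, I would first classify the Clifford-type subspaces $\cV\subset\so(\ag)$ that are GO subspaces, and then, for each of them, determine the admissible inner products. Write $m=\dim\cV$. Because a $\cV$-invariant subspace of $\ag$ is exactly a submodule over the Clifford algebra generated by $\cV$, Lemma~\ref{l:redst}\eqref{it:redV} lets me first treat the case that $\ag$ is an irreducible Clifford module and afterwards determine which direct sums of irreducibles remain GO. For the standard inner product one has $\Ng(\cV)=\cN(\cV)=[\cV,\cV]\oplus\cC(\cV)$, where $[\cV,\cV]$ is the normalizer of $\cV$ inside the generated subalgebra $\m(\cV)$.

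Assume first that $\ag$ is irreducible. For $m\le 3$ the subspace $\cV$ lies in a nice subalgebra and hence is of centralizer type by Proposition~\ref{p:ct}, so it is a GO subspace over every $\ag$. For $m\ge 4$ the structure is rigid: $\m(\cV)\cong\mathfrak{spin}(m+1)$, with $\cV$ its ``degree one'' part and $[\cV,\cV]\cong\mathfrak{spin}(m)$ its ``degree two'' part, while by Lemma~\ref{l:red}\eqref{it:redCdiva} the algebra $\cC(\cV)$ is $0$, $\so(2)$ or $\spg(1)$ according as the module is of real, complex or quaternionic type. Fixing a unit $J\in\cV$ and completing it to an orthonormal basis $J=J_1,J_2,\dots,J_m$ of $\cV$, one computes $\cC_{\cN(\cV)}(J)=\Span(J_aJ_b:2\le a<b\le m)\oplus\cC(\cV)\cong\mathfrak{spin}(m-1)\oplus\cC(\cV)$, so the GO condition at $J$ becomes: \emph{$JX\in\big(\mathfrak{spin}(m-1)\oplus\cC(\cV)\big)X$ for every $X\in\ag$}. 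Now I would split into cases. For $m=4$ the module is $\bh^2$ with $\cC(\cV)\cong\spg(1)$, and testing $X=(1,1)$ shows $JX$ lies off that subspace, so $\cV$ is \emph{not} a GO subspace. For $m\in\{5,6,7\}$ the module is $8$-dimensional and the condition does hold: when $m=6$ or $7$ the isotropy algebra $\mathfrak{spin}(m-1)$ (namely $\spg(2)$ or $\su(4)$) already acts transitively on $S^7$, so $\big(\mathfrak{spin}(m-1)\oplus\cC(\cV)\big)X=X^\perp\ni JX$; when $m=5$, where $\mathfrak{spin}(4)$ alone is too small, the key point is that $J$ and the isotropy commute with the involution $\tau=J_2J_3J_4J_5$ and that $\mathfrak{spin}(4)\cong\so(3)\oplus\so(3)$ acts transitively on the $4$-dimensional $\tau$-eigenspaces, on each of which $J$ acts as a complex structure. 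Finally, for $m\ge 8$ the irreducible module has dimension at least $16$, the element $J$ necessarily interchanges two distinct isotypic components of the $\mathrm{Spin}(m-1)$-action, and an explicit $X$ (as in the $m=4$ case) violates the GO condition.

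Passing to direct sums, $m\le 3$ works over every $\ag$; for $m\in\{5,6\}$ only the irreducible module is GO; and for $m=7$, where the two inequivalent irreducibles are distinguished by the sign of $K_1\cdots K_7$, a module built from $k$ copies of a \emph{single} irreducible has $\cC(\cV)\cong\so(k)$, and repeating the eigenspace/orbit analysis with this extra summand one finds the GO condition survives exactly for $k\le 3$, i.e.\ $\dim\ag\in\{8,16,24\}$, whereas a module mixing the two irreducibles has $\cC(\cV)=0$ and fails. This gives the list of Clifford-type GO subspaces. To finish, I would determine the admissible inner products case by case: for $m\le 3$ the space is of centralizer type, so $\cC(\cV)$ --- and hence the GO property --- is independent of $\ipr$ and every inner product is admissible; for $m=7$ with $\dim\ag\in\{16,24\}$ the subalgebra $\cN(\cV)$ is already minimal for the condition to hold, so any deviation of $\ipr$ from standard deletes a needed element of $\Ng(\cV)$ and $\ipr$ must be standard; and in the remaining case $m\in\{5,6,7\}$, $\dim\ag=8$, one tracks which elements of $\cN(\cV)$ are actually used in the GO argument --- these lie in the subalgebra $\mathfrak{spin}(5)\subset\mathfrak{spin}(m)$ responsible for the transitivity on the relevant spheres --- and concludes that $\ipr$ is admissible precisely when its restriction to a suitable $5$-dimensional subspace of $\cV$ is standard, with the stated orthogonality of the one or two extra directions for $m=6,7$, which are otherwise free to rescale.

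The main obstacle, I expect, is the irreducible analysis in the second and third paragraphs --- ruling out all $m\ge 8$ uniformly and pinning down the sharp bound $k\le 3$ for $m=7$ both require a careful description of how a single complex structure in $\cV$ sits relative to the $\mathrm{Spin}(m-1)$-decomposition of $\ag$ --- together with the admissibility analysis for $\dim\ag=8$, which needs the precise sub-Lie-algebra of $\cN(\cV)$ on which the GO condition genuinely depends to be isolated.
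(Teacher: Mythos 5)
The paper does not prove Theorem~\ref{tha:Cliff}: it is quoted from Riehm~\cite{Rie} (standard inner product) and Lauret~\cite[Theorem~5.10]{Lau} (general $\ipr$), so there is no internal argument to compare against. On its own terms, your architecture is sound and is essentially the Riehm--Lauret strategy: reduce to $\cV$-irreducible $\ag$, exploit that $\m(\cV)\cong\mathfrak{spin}(m+1)$ with $[\cV,\cV]\cong\mathfrak{spin}(m)$ and $\cC_{\cN(\cV)}(J_1)\cong\mathfrak{spin}(m-1)\oplus\cC(\cV)$, and read the GO condition as ``$J_1X$ lies in the tangent space at $X$ of the $\cC_{\cN(\cV)}(J_1)$-orbit, for every $X$''. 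The positive verifications for $m\in\{5,6,7\}$ on $\br^8$ via transitivity of $\Sp(2)$, resp.\ $\mathrm{SU}(4)$, on $S^7$, together with the $\tau=J_2J_3J_4J_5$ eigenspace splitting for $m=5$, are correct and appealingly conceptual.

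The gaps are in the negative results, which are the substance of the theorem. The exclusion of $m=4$ is left entirely to the reader, and it is not a dimension count: $\cC_{\cN(\cV)}(J_1)\cong\spg(1)\oplus\spg(1)$ is $6$-dimensional on $\br^8$, so one genuinely has to exhibit a bad $X$. For $m\ge 8$, the phrase ``interchanges two distinct isotypic components of the $\mathrm{Spin}(m-1)$-action'' is wrong already at $m=8$: there $\ag=\br^{16}$ restricted to $\mathrm{Spin}(7)$ is $2\Delta_7$, a \emph{single} isotypic component. The usable fact is that $J_1$ spans the centralizer $\so(2)$ of $\mathfrak{spin}(7)$ in $\so(16)$ and hence fails to preserve a generic $\mathfrak{spin}(7)$-invariant $\Delta_7$-copy while $\cC_{\cN(\cV)}(J_1)$ does; and this has to be re-argued as $m$ runs through the residues mod $8$, since the module type and $\cC(\cV)$ change. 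Most seriously, the reducible analysis --- the source of the bound $\dim\ag\in\{8,16,24\}$ for $m=7$ and of the exclusion of $2\Delta_6$, $2\Delta_7$ for $m\in\{5,6\}$ --- is dispatched in one sentence (``repeating the eigenspace/orbit analysis \dots survives exactly for $k\le 3$''); this sharp cutoff is exactly the nontrivial part of Riehm's theorem, and nothing in the sketch explains why $k=3$ succeeds while $k=4$ fails. The final admissibility paragraph has the same defect: it names a strategy (``track which elements of $\cN(\cV)$ are actually used'') without isolating the relevant $\mathfrak{spin}(5)$ subalgebra, showing nothing smaller suffices, or treating $\dim\ag\in\{16,24\}$, whereas Lauret's Theorem~5.10 is a genuine computation. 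As an outline of the cited proof the proposal is a reasonable reconstruction; as a proof it is incomplete precisely where the theorem is hard.
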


Note that the GO-pairs $(\cV, \ipr)$ in Theorem~\ref{tha:Cliff}\eqref{it:Cliff123} are of centralizer type (see Section~\ref{ss:centr}).

In the following table, we collect information on the dimensions of spin representations of the algebras $\so(n)$ (which will be the most common case in what follows), their type, and the maximal dimensions $d$ of non-singular subspaces of skew-symmetric matrices in the corresponding representation spaces given by~\eqref{eq:RH}.

{\renewcommand{\arraystretch}{1.4}
    \begin{table}[h!]
    \caption{Real spin representations and dimensions of non-singular subspaces.}
    \centering
    \begin{tabular}{|c|c|c|c|c|c|c|c|c|c|c|c|c|c|c|}
      \hline
        & $\Delta_3$ & $\Delta_4^\pm$ & $\Delta_5$ & $\Delta_6$ & $\Delta_7$ & $\Delta_8^\pm$ & $\Delta_9$ & $\Delta_{10}$ & $\Delta_{11}$ & $\Delta_{12}^\pm$ & $\Delta_{13}$ & $\Delta_{14}$ & $\Delta_{15}$ & $\Delta_{16}^\pm$\\
      \hline
      space & $\br^4$ & $\br^4$ & $\br^8$ & $\br^8$ & $\br^8$ & $\br^8$ & $\br^{16}$ & $\br^{32}$ & $\br^{64}$ & $\br^{64}$ & $\br^{128}$ & $\br^{128}$ & $\br^{128}$ & $\br^{128}$\\
      \hline
      type  & $\bh$ & $\bh$ & $\bh$ & $\bc$ & $\br$ & $\br$ & $\br$ & $\bc$ & $\bh$ & $\bh$ & $\bh$ & $\bc$ & $\br$ & $\br$\\
      \hline
      $d$  & $3$ & $3$ & $7$ & $7$ & $7$ & $7$ & $8$ & $9$ & $11$ & $11$ & $15$ & $15$ & $15$ & $15$\\
      \hline
    \end{tabular}
    \label{tab:spin}
    \end{table}
}

\subsection{Non-singular GO-pairs of Rep type}
\label{ss:gorep}

Recall that a subspace $\cV \subset \so(\ag)$ is called of \emph{Rep type}, if it is a subalgebra of $\so(\ag)$. If $\cV$ is also non-singular and $\dim \cV > 1$, then $\dim \cV = 3$ and $\cV$ is the image of the representation of $\so(3)$ on $\ag$ all of whose irreducible sub-representations $\ag_i\subset\ag$ are of dimensions divisible by $4$ (see e.g. \cite[Section~5.1]{LO}). The GO condition is always satisfied for a standard inner product on $V$, with $N=J$. Moreover, for any admissible inner product, the pure normalizer $\Pg(\cV)$ is a subalgebra of $\so(V)\simeq\so(3)$ by Lemma~\ref{l:red}\eqref{it:redP}. If $\Pg(\cV)=\so(3)$, then any invariant inner product is standard. Otherwise $\Pg(\cV)$ is abelian and hence Lemma~\ref{l:forc}\eqref{it:cpab} implies that $\cV$ is of centralizer type. Then by Corollary~\ref{c:GOct} $\dim \ag_i=4$ for all $i$ and any inner product is admissible. In the case when $\dim \ag_i>4$ for some $i$, the only admissible inner product is standard.

\section{Examples}
\label{s:examples}

In this Section we discuss the remaining examples in Theorem~\ref{th:class}. We already covered the cases of the examples of centralizer type. It remains to discuss case~\eqref{it:th3rep} of $\dim V=3$, and cases $\dim V=6,7$ with $\dim \ag = 8$.

\subsection{\ensuremath{\texorpdfstring{\dim}{dim} V = 3}}
\label{ss:dim3}

An equivalent description of the GO-pairs in case~\eqref{it:th3}\eqref{it:th3rep} of Theorem~\ref{th:class} is as follows. For $i=1,\ldots, p$, let $\rho_i : \so(3) \to \so(\ag_i)$ be irreducible representations with $\dim \ag_i = 4m_i$, and $\la_i$ nonzero constants such that $\la_i = 1$ if $m_i > 1$. The subspace $\cV \subset\oplus_{i=1}^p \so(\ag_i)$ is spanned by the three elements $J_a$ with $\pi_i(J_a)=\la_i \rho_i(E_a)$, where $E_1,E_2,E_3$ is a basis of $\so(3)$. We will now show that it is indeed a GO-pair, with respect to the standard inner product. We can assume that $V$ is not of centralizer type, and hence $m_i > 1$ for at least one $i$.

Notice that  $V_i=\rho_i(\so(3))$ is a subalgebra of $\so(\ag_i)$. Hence, since $\cP(\cV_i)$ acts effectively on $V_i$,  it follows that $\cP(\cV_i)=V_i  =\rho_i(\so(3))  \subset \so(\ag_i)$. By Lemma~\ref{l:forc}\eqref{it:cpab}, the pure normalizer $\cP(\cV)$ cannot be abelian. Thus $\cP(\cV) \cong \so(3)$ since it is isomorphic to a subalgebra of $\so(\cV) = \so(3)$.  It now follows from Lemma~\ref{l:red}\eqref{it:redP} that $ \cP(\cV) \subset \oplus_{i=1}^p \cP(\cV_i)=\oplus_{i=1}^p \rho_i(\so(3))=\rho(\so(3))$ where $\rho=\oplus_{i=1}^p \rho_i$ is the direct sum of the representations $\rho_i$. Thus $ \cP(\cV) =\rho(\so(3))$.

If $m_i=1$, we have $\so(\ag_i)\simeq \so(4)= \s_i\oplus\s_i'$ as the direct sum of ideals. The irreducible representation $\rho_i$ is the inclusion of $\spg(1)$ into $\so(4)$. Thus we can assume that $\rho_i(V_i)=\s_i$ and hence $\cC(\cV_i) =\s_i'$.

Take an arbitrary $J=\sum_{i=1}^p \la_i\rho_i(E) \in \cV$, with  $E \in \so(3)$, and an arbitrary $X = \sum_{i=1}^{p} X_i \in \ag$, with $X_i \in \ag_i$. Let  $N = \rho(E) + \sum_{i: m_i=1} C_i\in  \cN(V)$, where $C_i \in \cC(\cV_i)$ if $m_i=1$. Any such $N$ normalizes $\cV$ and commutes with $J$. Then the second GO  condition $JX=NX$ is equivalent to $\la_i\rho_i(E)X_i = \rho_i(E)X_i$ when $m_i > 1$ and to $(\la_i-1)\rho_i(E)X_i=C_i X_i$ when $m_i=1$. The former is clearly satisfied, and for the latter we can assume $\lambda_i \ne 1$ since otherwise we  choose $C_i=0$. Next we observe that for any $X_i \in \ag_i$, which we can assume to be non-zero, we have  $C_i X_i \in X_i^\perp$ and the map from $\cC(\cV_i)=\s_i'$ to $X_i^\perp \subset \ag_i$ defined by $C \mapsto CX_i$ is surjective. Thus we can always choose $C_i$ in order to satisfy the second equation.

Finally, we claim that any admissible inner product is standard. Indeed, if $\ip$ is admissible, then $\Pg(V)\subset \cP(V)\simeq \so(3)$. We can again assume that $\Pg(V)$ is not $1$-dimensional since $V$ is not of centralizer type, and hence $\Pg(V)= \cP(V)$. The inner product $\ip$ being admissible means that for any $N\in \cP(V)$, the endomorphism $(\ad N)_{|V}$ is skew-symmetric in $\ip$. We can write $\<J,J'\>= (SJ,J')$   where $(\ , \ )$ is a standard inner product and $S$  a positive definite symmetric linear map on $\cV$. Since $(\ad N)_{|V}$ is skew-symmetric in $(\, \ )$,  it is skew-symmetric in $\ip$ if and only if it commutes with $S$. Since $\cP(V)$ acts irreducibly on $V$, it follows that $S$ is a multiple of the identity and hence $\ip$ is standard as well.

\subsection{\ensuremath{\texorpdfstring{\dim}{dim} \texorpdfstring{\ag}{\unichar{"1D51E}} = 8}}
\label{ss:dima8}

Consider the standard representation $\pi$ of the algebra $\spg(2)$ on $\br^8$. Then $\Lambda^2\pi$ splits into the following sum of $\spg(2)$-modules: the adjoint module, which can be viewed as the subalgebra $\spg(2) \subset \so(8)$, the three-dimensional trivial module, which is the subalgebra $\spg(1) \subset \so(8)$ which centralizes $\spg(2)$, and the sum of three isomorphic $5$-dimensional modules $\br^5$ on which $\spg(2)$ acts via  the standard representations of $\so(5) \cong \spg(2)$.

We can describe these modules explicitly as follows. We let $\bh$ be the algebra of quaternions, $\Im \bh$ the imaginary quaternions, and denote by $\overline{q}$ the conjugate of $q$. For $q \in \bh$, let $L_q, R_q: \bh \to \bh$ be  the left and  right multiplication by $q$, respectively.

If we identify $\br^8$ with $\bh^2$, then
relative to an orthonormal basis for $\br^8 = \bh^2$, the above $\spg(2)$-submodules of $\so(8)$ are given as follows:
\begin{equation}\label{eq:sp2modules1}
	\begin{gathered}
		\spg(2)=\bigg\{\begin{pmatrix}
			L_a & L_q \\
			L_{-\overline{q}} & L_b
		\end{pmatrix} \, | \, a, b \in \Im \bh,\, q \in \bh\bigg\}, \quad
		\spg(1)=\bigg\{\mathcal{R}_c = \begin{pmatrix}
			R_c & 0_4 \\
			0_4 & R_c
		\end{pmatrix} \, | \, c \in \Im \bh\bigg\}, \\
		\br^5_d=\bigg\{\begin{pmatrix}
			R_d & 0_4 \\
			0_4 & R_d
		\end{pmatrix}
		\begin{pmatrix}
			\mu I_4 & L_p \\
			L_{\overline{p}} & -\mu I_4
		\end{pmatrix} \, | \, \mu \in \br,\, p \in \bh\bigg\}\ \text{\ with }\  0\ne d\in\Im \bh ,
	\end{gathered}
\end{equation}

For a subspace $W \subset \Im \bh$, we denote $\spg(1)_W=\Span(\mathcal{R}_c \, | \, c \in W)$ in the notation of~\eqref{eq:sp2modules1} and  introduce the $\spg(2)$-module
\begin{equation}\label{VdW}
	\cV_{d,W} = \br^5_d \oplus \spg(1)_W\ \text{with}\ d\in\Im\bh, \ |d|=1\quad \text{and } W \subset \Im \bh
\end{equation}

\begin{remark} \label{rem:J67}
We observe that for $\theta \in (0, \pi/2)$, the subspaces $\cV_{d,W} \subset \so(8)$ with $W= \br\, \mathrm{j}$ and $d=\cos \theta \mathrm{i} + \sin \theta \mathrm{j}$ are precisely those in Theorem~\ref{th:class}\eqref{it:th6}, and  $\cV_{d,W} $ with $W= \Span(\mathrm{j},\mathrm{k})$ and $d=\cos \theta \mathrm{i} + \sin \theta \mathrm{j}$, are those in Theorem~\ref{th:class}\eqref{it:th7}. To see this we notice that any element of $\br^5_d$ as given in~\eqref{eq:sp2modules1} squares to $-\mu^2 \|d\|^2 \|p\|^2 I_8$, and so $\br^5_d$ is spanned by $5$ anticommuting complex structures $J_1, \ldots, J_5$ on $\br^8$. Moreover, $J_6=\mathcal{R}_{\mathrm{k}}$ and $J_7=\mathcal{R}_{-\sin \theta \mathrm{i} + \cos \theta \mathrm{j}}$ are anticommuting complex structures which anticommute with $J_1, \ldots, J_5$ as well, and $J'=J_7 \cos \theta + J_6J_7 \sin \theta = \mathcal{R}_{\mathrm{j}}$.
\end{remark}

In order to show that these subspaces have the properties as claimed, we prove the following general statement.

\begin{lemma}\label{l:sp2R8}
	Let  $\cV_{d,W} = \br^5_d + \spg(1)_W \subset \so(8)$  as defined in \eqref{VdW}. Then
	\begin{enumerate}[label=\emph{(\alph*)},ref=\alph*]
		\item \label{it:sp2R8GO}
		$\cV_{d,W} \subset \so(8)$ is an irreducible GO subspace.
		
			\item \label{it:sp2R8nonsing}
		$\cV_{d,W}$ is non-singular iff $d \notin W$.
		
		\item \label{it:sp2R8types}
		$\cV_{d,W}$  is of  Clifford type iff  $d \perp W$,
		
		\item \label{it:sp2R8normalizer}
		If $d \notin W$ and $d \not\perp W$, then $\cC(\cV_{d,W})=0$ and $\cN(\cV_{d,W})=\cP(\cV_{d,W})=\spg(2)$. Moreover, for almost all pairs $(J,X)$ an element $N=N(J,X) \in \cN(\cV_{d,W})$ satisfying the GO condition is unique and the set of such elements $N(J,X)$ is open in $\cN(\cV_{d,W})$.
		
		\item \label{it:sp2R8rep}
			$\cV_{d,W}$ is never of Rep type or centralizer type.
	
	\end{enumerate}
\end{lemma}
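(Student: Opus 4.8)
The plan is to derive everything from one identity. I would begin by parametrizing a general element of $\cV_{d,W}$ as $J=\mathcal{R}_d B+\mathcal{R}_c$ with $c\in W$ and $B=\left(\begin{smallmatrix}\mu I_4 & L_p\\ L_{\bar p} & -\mu I_4\end{smallmatrix}\right)$ symmetric — noting that $\br^5_d=\mathcal{R}_d\cdot\{B\}$, since $L_p$ and $R_d$ commute so $\mathcal{R}_dB$ really has the form in \eqref{eq:sp2modules1} — and then computing, via $R_cR_d+R_dR_c=-2\<c,d\>I_4$ for $c,d\in\Im\bh$ and $B^2=(\mu^2+|p|^2)I_8$, the \emph{master formula}
\[ J^2=-(\mu^2+|p|^2+|c|^2)\,I_8\;-\;2\<c,d\>\,B. \]
Parts \eqref{it:sp2R8nonsing} and \eqref{it:sp2R8types} then drop out. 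As $B$ is symmetric with $B^2=(\mu^2+|p|^2)I_8$, the operator $J^2$ has nontrivial kernel iff $(\mu^2+|p|^2+|c|^2)^2=4\<c,d\>^2(\mu^2+|p|^2)$; combining $\<c,d\>^2\le|c|^2$ (as $|d|=1$) with the AM--GM inequality forces $|c|^2=\mu^2+|p|^2$ and $c$ parallel to $d$, and such a nonzero $c\in W$ exists precisely when $d\in W$ — this is \eqref{it:sp2R8nonsing}. And $J^2$ is a scalar for every $J$ iff $\<c,d\>B$ is always a scalar, i.e.\ (taking $\mu=1$, $p=0$) iff $\<W,d\>=0$ — this is \eqref{it:sp2R8types}.

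The substantial part is the GO property \eqref{it:sp2R8GO}. For irreducibility, the master formula with $c=0$ shows that $\br^5_d$ is spanned by five anticommuting complex structures, hence is a representation of $\Cl(5)\cong M_4(\bc)$ on $\br^8$; since $\br^8$ is the unique irreducible $\Cl(5)$-module, $\ag$ is $\br^5_d$-irreducible, a fortiori $\cV_{d,W}$-irreducible. For the GO condition I would use only $N\in\spg(2)$, which normalizes $\cV_{d,W}$ since $[\spg(2),\br^5_d]\subset\br^5_d$ ($\br^5_d$ being an $\spg(2)$-submodule of $\so(8)$, by \eqref{eq:sp2modules1}) and $[\spg(2),\spg(1)_W]=0$. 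The device I expect to be the main obstacle — producing the element $N=N(J,X)$ — is then handled by a normal form: $\Sp(2)$ preserves $\cV_{d,W}$, acts on $\br^5_d\cong\br^5$ transitively on spheres through $\SO(5)$, and acts trivially on $\spg(1)_W$ by conjugation, so every $J\in\cV_{d,W}$ is $\Sp(2)$-conjugate to $\diag(R_u,R_v)$ with $u=rd+c$, $v=-rd+c$; since the GO condition is $\Sp(2)$-equivariant it suffices to treat such $J$. For these, as $L$ and $R$ commute, every $N=\diag(L_a,L_b)$ with $a,b\in\Im\bh$ lies in $\spg(2)$ and commutes with $J$, and taking $a=x_1ux_1^{-1}$, $b=x_2vx_2^{-1}$ (conjugates of imaginary quaternions being imaginary; with the obvious convention if some $x_i=0$) gives $NX=JX$ for $X=(x_1,x_2)$. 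This proves \eqref{it:sp2R8GO}; when $d\perp W$ it also follows from Theorem~\ref{tha:Cliff}\eqref{it:Cliff567}.

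For \eqref{it:sp2R8normalizer}: $\cC(\cV_{d,W})\subset\cC(\br^5_d)=\br\mathcal{R}_d$ (the commutant of $\Cl(5)\cong M_4(\bc)$ inside $\so(8)$), and $\mathcal{R}_d$ centralizes $\spg(1)_W$ only if $d\times c=0$ for all $c\in W$, i.e.\ $W\subset\br d$; so the hypothesis $d\notin W$ forces $\cC(\cV_{d,W})=0$, hence $\cN(\cV_{d,W})=\cP(\cV_{d,W})$. This compact subalgebra contains $\spg(2)$ and normalizes $\m(\cV_{d,W})\supset\br^5_d+[\br^5_d,\br^5_d]=\su(4)$, so it lies in the normalizer of $\su(4)$ in $\so(8)$, which is $\su(4)\oplus\br\mathcal{R}_d$; as $\spg(2)$ is maximal in $\su(4)$, the only candidates are $\spg(2)$, $\spg(2)\oplus\br\mathcal{R}_d$, $\su(4)$, $\su(4)\oplus\br\mathcal{R}_d$, and the three larger ones are excluded — those containing $\br^5_d$ would force $\spg(2)=[\br^5_d,\br^5_d]\subset\cV_{d,W}$, which is impossible, and $\spg(2)\oplus\br\mathcal{R}_d$ is excluded because $[\mathcal{R}_d,\spg(1)_W]\not\subset\spg(1)_W$ once $d\notin W$ and $W\ne0$. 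Hence $\cN(\cV_{d,W})=\cP(\cV_{d,W})=\spg(2)$. In the normal form with $u\ne v$ — the generic case, since $\<d,c\>\ne0$ generically because $d\not\perp W$ — a short block computation ($[N,J]$ has off-diagonal blocks $L_qR_{v-u}$, $L_{-\bar q}R_{u-v}$, forcing $q=0$) shows the centralizer of $J$ in $\spg(2)$ is exactly $\{\diag(L_a,L_b)\}$, so for $x_1,x_2\ne0$ the $N$ above is uniquely determined by $a=x_1ux_1^{-1}$, $b=x_2vx_2^{-1}$; letting $(J,X)$ vary over this dense open set, these $N$ sweep, up to $\Sp(2)$-conjugacy, over generic elements of the subalgebra $\so(4)=\{\diag(L_a,L_b)\}\subset\spg(2)$, whose $\Sp(2)$-conjugates fill a dense open subset of $\spg(2)$ (they contain every regular element), which gives the openness assertion.

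Finally, \eqref{it:sp2R8rep}: $\cV_{d,W}$ is not a subalgebra, since that would force $\spg(2)=[\br^5_d,\br^5_d]\subset\cV_{d,W}$, impossible as $\dim\cV_{d,W}=5+\dim W\le8<10$; and by \eqref{it:sp2R8GO} together with Proposition~\ref{p:ct} it cannot be of centralizer type, for otherwise it would lie in a nice subalgebra, all of whose invariant subspaces have dimension at most $4$, contradicting the $\br^5_d$-irreducibility of $\br^8$. What remains are routine matrix verifications — the explicit evaluation of the master formula, of $[\spg(2),\br^5_d]\subset\br^5_d$, and of the centralizer block computation — which I would relegate to short checks.
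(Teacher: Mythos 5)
Your proposal is correct and follows the same overall skeleton as the paper's proof (reduce to a $\Sp(2)$-normal form, find $N$ in $\spg(2)$ by solving $xq = ax$ with $a = xqx^{-1}$, count intermediate subalgebras between $\spg(2)$ and $\so(8)$), but with some genuine technical variations that are worth noting. For parts~\eqref{it:sp2R8nonsing} and~\eqref{it:sp2R8types} the paper conjugates $J$ to block-diagonal form and reads off eigenvalues; your \emph{master formula}
\[ J^2 = -(\mu^2 + |p|^2 + |c|^2) I_8 - 2\langle c,d\rangle B \]
is coordinate-free and extracts both conclusions at once via Cauchy--Schwarz and AM--GM, which is cleaner and avoids the normal-form step entirely for these parts. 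For part~\eqref{it:sp2R8normalizer} the paper lists the intermediate subalgebras $\spg(2) \subset \spg(2)\oplus\br \subset \spg(2)\oplus\spg(1) \subset \so(8)$ and $\spg(2)\subset\su(4)\subset\so(8)$ directly; you instead pin down $\cN$ to the range $\spg(2)\subset\cN\subset\ug(4)$ by observing that $\m(\cV_{d,W})\supset\su(4)$ and invoking Lemma~\ref{l:sum}\eqref{it:cNcCm}, which gives a shorter list of candidates and a more systematic exclusion. For part~\eqref{it:sp2R8rep} the paper dismisses the centralizer-type possibility by citing the bound $\dim V\le 3$; that bound (Corollary~\ref{c:GOct}) is stated only for non-singular pairs, so your argument via Proposition~\ref{p:ct} and irreducibility is actually more robust, covering the singular case $d\in W$ as well. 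The normal-form reduction and the explicit construction $a=x_1 u x_1^{-1}$, $b = x_2 v x_2^{-1}$ for part~\eqref{it:sp2R8GO}, and the block computation showing $q=0$ for the uniqueness statement, match the paper. One small slip: the genericity you need for the centralizer block computation is $r\ne 0$ (so that $u\ne v$), not $\langle d,c\rangle\ne 0$; the latter is what makes $|u|\ne |v|$ and is not what forces $q=0$, but this does not affect the argument since both conditions hold on a dense open set.
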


\begin{proof}
\eqref{it:sp2R8GO}	First  note that the subalgebra $\spg(2)$ normalizes  $\cV_{d,W}$, and so it is sufficient to prove that for any $J \in \br^5_d + \spg(1)$ and $X \in \br^8$, the GO conditions  are satisfied for some $ N \in \spg(2)$. Since the action of $\spg(2)$ on $\br^5_d$ is the standard action of $\so(5)$ on $\br^5$, and $\Sp(2)$ acts transitively on the unit sphere, we can conjugate by an element of $\Sp(2)$ and hence assume  that $p=0$. Thus	it suffices to prove the GO conditions with the choice
	\begin{equation} \label{eq:sp2R8J1}
		J = \begin{pmatrix}
			R_d & 0_4 \\
			0_4 & R_d
		\end{pmatrix}
		\begin{pmatrix}
			\mu I_4 & 0_4 \\
			0_4 & -\mu I_4
		\end{pmatrix} +
		\begin{pmatrix}
			R_c & 0_4 \\
			0_4 & R_c
		\end{pmatrix} =
		\begin{pmatrix}
			R_{\mu d+c} & 0_4 \\
			0_4 & R_{-\mu d + c}
		\end{pmatrix} \quad c\in W, \ \mu\in \br.
	\end{equation}
	An element of $\spg(2)$ as given by~\eqref{eq:sp2modules1} commutes with  $J$ when $\mu q = 0$, and so taking $N \in \spg(2)$ with $q=0$ we have $[J,N]=0$. Then for a vector $X=(x,y)^t,\; x, y \in \bh$, the condition $JX=NX$ is equivalent to the existence of $a, b \in \Im \bh$ such that the two quaternionic equations $x(\mu d + c) = a x$ and $y(-\mu d + c) = b y$ are satisfied. For the first equation, if $x=0$, there is nothing to prove, and if $x \ne 0$, we take $a= x(\mu d + c) x^{-1}$. Notice  that $a \in \Im \bh$ follows, as $c,d \in \Im \bh$ and $\mu \in \br$. A similar argument applies to the second equation. This shows that  $\cV_{d,W}$ is a GO subspace.
	
\eqref{it:sp2R8types} Any $J\in \cV_{d,W}$  is conjugate to a matrix $J$ given by~\eqref{eq:sp2R8J1}, which can be singular only if $\pm \mu d + c = 0$ for some $\mu \in \br$ and some $c \in V$. So $\cV_{d,W}$ is non-singular if and only if $d \notin W$.
	
\eqref{it:sp2R8nonsing} Note that the eigenvalues of $J^2$ are $-(\mu^2 + \|c\|^2 \pm 2 \mu \<c,d\>)$, which are  equal only if $\<c,d\>=0$ when $\mu \ne 0$. Thus $J$ is a multiple of the identity, i.e.,   $\cV_{d,W}$ is of Clifford type, only if $d \perp W$.
	
\eqref{it:sp2R8normalizer} Recall that  $\spg(2) \subset \cP(\cV_{d,W}) \subset \so(8)$. On the other hand, the only intermediate subalgebras are $ \spg(2) \subset  \spg(2) \oplus \br  \subset  \spg(2) \oplus \spg(1) \subset \so(8)$, or $\spg(2)\subset \su(4)  \subset \so(8)$. The latter is excluded since it does not centralize $\spg(1)$. In the former case one easily sees that no element of $\spg(1)$ normalizes $\cV_{d,W}$, using the fact that $d \notin V$ and $d \not \perp V$. Thus $\cN(\cV_{d,W})=\cP(\cV_{d,W})=\spg(2)$ and $\cC(\cV_{d,W})=0$. Furthermore,  the calculation in the proof of~\eqref{it:sp2R8GO} shows that for $J$ given by~\eqref{eq:sp2R8J1} and $X=(x,y)^t,\; x, y \in \bh \setminus \{0\}$, there is a unique $N \in \cN(\cV)$ which satisfies the GO condition, namely $N = \left(\begin{smallmatrix} L_a & 0\\ 0 & L_b \end{smallmatrix}\right)$, where $a= x(\mu d + c) x^{-1}$ and $b = y(-\mu d + c) y^{-1}$. Conjugating by $\Sp(2)$ we obtain that the set of such elements $N$ is an open subset of $\cN(\cV) = \spg(2)$.
	
\eqref{it:sp2R8rep}  $\cV_{d,W}$ is not a subalgebra of $\so(8)$ since	$[\cV_{d,W},\cV_{d,W}] \supset [\br^5_d, \br^5_d]=\spg(2)$, hence no $\cV_{d,W}$ is of Rep type. It is not of centralizer type either since those satisfy $\dim V\le 3$.
\end{proof}
	
Notice that in both cases $\dim V =6$ and $\dim V = 7$ in Theorem~\ref{th:class}, the subspaces $\cV_{d,W}$ with different $\theta \in (0, \pi/2)$ are not conjugate, even by $\SO(8)$. Indeed, from the proof of~\eqref{it:sp2R8nonsing} and by Remark~\ref{rem:J67}, the maximum of the maximal eigenvalue of the matrix $J^2$, where $J \in \cV$ with $\Tr (J^2)=-1$, equals $\frac18(\sin \theta -1)$. This implies that the nilmanifolds constructed from them are pairwise non-isomorphic.

We next determine admissible inner products. From Lemma~\ref{l:sp2R8}\eqref{it:sp2R8normalizer} we know that $\Ng(\cV) \subset \cN(\cV) = \spg(2)$ and that the set of elements $N \in \cN(\cV)$ needed to satisfy the GO conditions is an open subset of $\cN(\cV) = \spg(2)$. Thus an inner product $\ipr$ is admissible if and only if $\Ng(\cV) = \cN(\cV) = \spg(2)$. This means that the action of $\spg(2)$ on $\cV_{d,W}$ has to be skew-symmetric relative to $\ipr$. This action is the standard action of $\so(5) \cong \spg(2)$ on $\br^5_d$ and is trivial on $\spg(1)_W$ in the notation of~\eqref{VdW}. Hence an inner product is admissible if and only if its restriction to $\br^5_d$ is standard and $\br^5_d \perp \spg(1)_W$. This is equivalent to the condition in the Introduction by the construction in Remark~\ref{rem:J67}.

\section{Proof of Theorem~\ref{th:class} for \ensuremath{\texorpdfstring{\dim}{dim} \texorpdfstring{\ag}{\unichar{"1D51E}} \texorpdfstring{\le}{\unichar{"2264}} 8}}
\label{s:proofdim8}

The most interesting and important case of Theorem~\ref{th:class} is the case $\dim \ag =8$. We give the proof of this case here assuming the dimension bounds from Table~\ref{tab:dims1} which we prove later in Proposition~\ref{p:dimbound}. We discuss the case $\dim \ag < 8$ in the end of the section.

\begin{lemma} \label{l:dima8}
Let $\cV \subset \so(\ag)$ be a non-singular GO subspace such that $\dim \ag =8$ and the space $\ag$ is $\cV$-irreducible. Then we have one of the following:
	\begin{enumerate}[label=\emph{(\alph*)},ref=\alph*]
		\item \label{it:dim8Cl}
		$(\cV, \ipr)$ is of Clifford type, as given in Theorem~\ref{tha:Cliff}\eqref{it:Cliff567}.
		
		\item \label{it:dim8Rep}
		$(\cV, \ipr)$ is of Rep type, and $\cV$ is the image of an irreducible representation of $\so(3)$ on $\br^8$, see Section~\ref{ss:gorep}.
		
		\item \label{it:dim8J'}
		$(\cV, \ipr)$ is as given in Theorem~\ref{th:class}\eqref{it:th6} or \eqref{it:th7}.
\end{enumerate}
\end{lemma}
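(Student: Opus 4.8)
The strategy is to start from the classification of non-singular GO subspaces of dimension small enough to be controlled by Table~\ref{tab:spin} together with the structural results of Section~\ref{s:prelim}, and to show that the only possibilities in dimension $8$ are the three listed. First I would invoke the dimension bounds from Table~\ref{tab:dims1} (Proposition~\ref{p:dimbound}), which restrict $\dim\cV$ for an irreducible non-singular GO subspace on $\br^8$ to a short list of values. Combined with Table~\ref{tab:spin} (the maximal dimension of a non-singular subspace of $\so(8)$ is $d=7$), this immediately rules out $\dim\cV\ge 8$, so $\dim\cV\in\{1,\dots,7\}$. The cases $\dim\cV=1,2$ are of centralizer type by the remark in Section~\ref{ss:centr}, but a centralizer-type non-singular GO-pair has all irreducible blocks of dimension $4$ by Corollary~\ref{c:GOct}, contradicting $\cV$-irreducibility of $\ag=\br^8$; so $\dim\cV\ge 3$.

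\textbf{Key steps.} For $\dim\cV=3$: by Lemma~\ref{l:forc} and Section~\ref{ss:gorep}, either $\cV$ is of centralizer type (again impossible by irreducibility on $\br^8$, since the blocks would be $4$-dimensional) or $\Pg(\cV)\cong\so(3)$ and $\cV$ is the image of an irreducible $\so(3)$-representation on $\br^8$, giving case~\eqref{it:dim8Rep}. For $\dim\cV\in\{5,6,7\}$: here I would use the analysis of Section~\ref{ss:dima8}. The pure normalizer $\Pg(\cV)$ acts on $\ag=\br^8$, and by Lemma~\ref{l:redst}\eqref{it:redPir} (applied with the possible centralizer types on an irreducible $\br^8$) and Lemma~\ref{l:red}\eqref{it:redCdiva} one pins down $\cC(\cV)$ and forces $\Pg(\cV)$ to contain a copy of $\spg(2)$ acting as the standard representation on $\br^8=\bh^2$; the intermediate-subalgebra argument from the proof of Lemma~\ref{l:sp2R8}\eqref{it:sp2R8normalizer} then shows $\cN(\cV)$ is one of $\spg(2),\ \spg(2)\oplus\br,\ \spg(2)\oplus\spg(1)$. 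Decomposing $\Lambda^2\br^8$ into $\spg(2)$-modules as in~\eqref{eq:sp2modules1}, the GO subspace $\cV$ must be built out of copies of $\br^5_d$ and $\spg(1)_W$, i.e. $\cV=\cV_{d,W}$ for suitable $d\in\Im\bh$ and $W\subset\Im\bh$. Non-singularity forces $d\notin W$ (Lemma~\ref{l:sp2R8}\eqref{it:sp2R8nonsing}); if $d\perp W$ we are in the Clifford case~\eqref{it:dim8Cl} (Lemma~\ref{l:sp2R8}\eqref{it:sp2R8types}), and otherwise $W=\br c$ or $W=\Span(c,c')$ with $d\not\perp W$, which after conjugation by $\Sp(2)\times\Sp(1)$ and $\O(\Im\bh)$ is exactly the normal form of Remark~\ref{rem:J67}, giving case~\eqref{it:dim8J'} with $\theta\in(0,\pi/2)$. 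Finally, for $\dim\cV=4$ one must show no non-singular irreducible GO subspace occurs on $\br^8$: this follows because $\dim\cV=4$ does not appear in the Clifford list (Theorem~\ref{tha:Cliff}) and cannot be of Rep type (a non-singular Rep-type subspace with $\dim>1$ has dimension $3$) nor of centralizer type, and the $\spg(2)$-module decomposition above has no $4$-dimensional GO subspace that is simultaneously non-singular and irreducible.

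\textbf{Main obstacle.} The delicate point is the middle step: showing that in the cases $\dim\cV\in\{4,5,6,7\}$ the pure normalizer must actually be large enough to contain $\spg(2)$ acting standardly, so that the $\spg(2)$-module decomposition~\eqref{eq:sp2modules1} applies. A priori $\Pg(\cV)$ could be some smaller subalgebra of $\so(8)$ and one needs the GO condition itself (not just normalizer bookkeeping) to force enough elements into $\Ng(\cV)$; concretely, one has to rule out, for each candidate dimension, that $\cV$ sits inside $\ug(4)$ or inside a proper subalgebra with too small a centralizer to supply the required $N(J,X)$. I expect this to be handled by the genericity argument of Lemma~\ref{l:generic} combined with Lemma~\ref{l:W} (regular elements of the $\sP$-action and the structure of $W\subset\cV$), reducing to the centralizer-type subspace $W$ whose dimension is $\le 3$, and then a short case check on how such $W$ can sit inside a $5$-, $6$-, or $7$-dimensional non-singular subspace of $\so(8)$. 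The bookkeeping is routine once the right list of subalgebras of $\so(8)$ containing $\spg(2)$ is in hand, so the real content is extracting the $\spg(2)$ from the abstract GO hypothesis.
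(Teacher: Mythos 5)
Your skeleton matches the paper's: restrict $\dim\cV$, handle small cases by centralizer/Rep type, then in the ``common type'' regime force $\Pg(\cV)$ to contain $\spg(2)$ acting standardly on $\br^8=\bh^2$ and read off $\cV=\cV_{d,W}$ from the $\spg(2)$-module decomposition of $\Lambda^2\br^8$. However, the step you flag as ``the delicate point'' -- extracting $\spg(2)$ from the abstract GO hypothesis -- is exactly the content of the proof, and your proposed route (Lemma~\ref{l:generic} together with Lemma~\ref{l:W}) is too vague to close it; Lemma~\ref{l:generic} in fact plays no role here. What the paper actually does is pick a generic $J\in\cV$ (with $J\notin\cN(\cV)$, $J^2$ having the maximal number of distinct eigenvalues, and $J$ regular for the $\sP$-action) and split into two cases according to whether $J$ has a simple eigenvalue. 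In each case it writes down the full centralizer $\s=\cC_{\so(8)}(J)$ explicitly -- $\br^2\oplus\su(3)$ in the simple-eigenvalue case, $\br^2\oplus\su(2)\oplus\su(2)$ otherwise -- and uses this concrete ambient algebra to (i) rule out $\cC(\cV)\cong\spg(1)$, and (ii) combine the inequality $\dim\cC(J)\ge 6$ from Proposition~\ref{p:dimbound} with the cohomogeneity bounds of Lemma~\ref{l:W} and Straume's classification of low-cohomogeneity sphere actions to force $\cP(\cV)=\so(5)$ (or $\so(5)\oplus\so(2)$) with $\cC_P(J)=\so(4)$, eliminating candidates like $\g_2$ or $\so(4)\subset\s$ by inspection of $\s$ or by representation-theoretic dimension counts. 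Only after this is $\so(5)\cong\spg(2)$ pinned down and the module decomposition~\eqref{eq:sp2modules1} applicable.

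A secondary issue: your dismissal of $\dim\cV=4$ appeals to the $\spg(2)$-module decomposition, which is circular since obtaining that decomposition is precisely what requires the cohomogeneity argument; in the paper, $\dim\cV=4$ is excluded because the Straume-table analysis only yields $\cP(\cV)$ related to $\so(5)$, which by dimension forces $\dim\cV\ge 5$. You also invoke a chain ``$\cN(\cV)$ is one of $\spg(2)$, $\spg(2)\oplus\br$, $\spg(2)\oplus\spg(1)$'' citing the intermediate-subalgebra argument of Lemma~\ref{l:sp2R8}\eqref{it:sp2R8normalizer}, but that argument presupposes $\spg(2)\subset\cP(\cV)$ as a hypothesis and so again cannot be used before that inclusion is established. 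In short: the architecture of your argument is sound, but the central reduction to the $\spg(2)$ picture is not carried out, and the tools you name are not the ones that do it.
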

\begin{proof}
Assume that $\cV \subset \so(8)$ is a non-singular GO subspace which is of neither Rep, nor Clifford type. Then $V$ contains an open and dense set of elements $J$ such that $J \notin \cN(\cV)$ and that $J^2$ has at least two different eigenvalues. Choose $J$ in the intersection of these two sets and also belonging to the set $V_{reg}$ of regular elements of the action of $\sP$ on $V$.

The centralizer $\cC(J)$ of $J$ in $\cN(\cV)$ is a subalgebra of the centralizer $\cC_{\so(8)}(J)$ of $J$ in $\so(8)$. Moreover, we have $\cC(J)=\cC_P(J) \oplus \cC(\cV)$, the direct sum of ideals, where $\cC_P(J) = \cC(J) \cap \Pg(\cV)$, and where $\cC(\cV)$ is given in Lemma~\ref{l:red}\eqref{it:redCdiva}.

\medskip

From Table~\ref{tab:dims1}, case $18$, we have $\dim \cC(J) \ge 6$ and $4 \le \dim V \le 7$.

\medskip

We first consider the case when $J$ has a simple eigenvalue. Choosing an orthonormal basis for $\br^8$ we obtain
	\begin{equation}\label{eq:R8simple}
		J=\left(
		\begin{array}{ccc}
			0 & \a & 0 \\
			-\a & 0 & 0 \\
			0 & 0 & K \\
		\end{array}
		\right), \quad
		\cC(J) \subset \s = \Big\{
		\left(
		\begin{array}{ccc}
			0 & t & 0 \\
			-t & 0 & 0 \\
			0 & 0 & U \\
		\end{array}
		\right)
		\, | \, t \in \br, \, U \in \ug(3)\Big\} \cong \br^2 \oplus \su(3),
	\end{equation}
where $\a \ne 0$ and $K$ is a non-singular $6 \times 6$ matrix having no eigenvalues $\pm \a \, \mathrm{i}$. We note that the subalgebra $\s$ does not contain a subalgebra $\spg(1)$ consisting of matrices which square to a multiple of the identity, and so the centralizer $\cC(\cV)$ in the decomposition $\cC(J)=\cC_P(J) \oplus \cC(\cV)$ can be either trivial, or $1$-dimensional. As $\dim \cC(J) \ge 6$, we obtain that $\dim \cC_P(J) \ge 5$. We also note that $\dim \cC_P(J) \le \dim \cC(J) < 10$, as $J \notin \cC(J)$, and so $\cC(J)$ is a proper subalgebra of $\s$.
	
	Furthermore, by Lemma~\ref{l:W}\eqref{it:Wsimple}, the group $\sP \subset \SO(\cV)$ acts transitively and almost effectively on the unit sphere of $\cV$. The principal isotropy subalgebra of this action is $\cC_P(J)$. From \cite[Table~I]{Str}, when $4 \le \dim \cV \le 7$ and $5 \le \dim \cC_P(J) \le 9$, this is only possible in the following two cases: $\SO(5)$ acting on $\br^5$ with $\cC_P(J)=\so(4)$ and $G_2$ acting on $S^6$ with $\cC_P(J)=\su(3)$. But the first case is not possible as the algebra $\s$ given in~\eqref{eq:R8simple} does not contain a subalgebra $\so(4)$, and in the second case we arrive at a contradiction with Lemma~\ref{l:redst}\eqref{it:redPir}, as $\g_2$ has no nontrivial representation whose dimension divides $8$.
	
\medskip

Now suppose $J$ has no simple eigenvalues. Then choosing an orthonormal basis for $\br^8$ we obtain
	\begin{equation*}
		J=\left(
		\begin{array}{cc}
			\a K & 0_4 \\
			0_4 & \b K \\
		\end{array}
		\right), \quad
		\cC(J) \subset \s = \Big\{
		\left(
		\begin{array}{cc}
			U_1 & 0_4 \\
			0_4 & U_2 \\
		\end{array}
		\right)
		\, | \, U_1, U_2 \in \ug(2)\Big\} \cong \br^2 \oplus \su(2) \oplus \su(2),
	\end{equation*}
	where $\a \ne \b,\, \a, \b \ne 0$, and $K \in \so(4)$ is such that $K^2=-I_4$, and $U_1, U_2 \in \so(4)$ commute with $K$. Similar to the previous case we note that the centralizer $\cC(\cV)$ in the decomposition $\cC(J)=\cC_P(J) \oplus \cC(\cV)$ cannot be isomorphic to $\spg(1)$. Indeed, the only subalgebra of $\s$ isomorphic to $\spg(1)$ and consisting of matrices whose square is a multiple of $I_8$ is the diagonal of $\su(2) \oplus \su(2)$, up to conjugation, and then its centralizer in $\s$ is the abelian ideal $\br^2 \subset \s$. Then both $J$ and $\cC_P(J)$ lie in this ideal, which contradicts the fact that $\dim \cC(J) \ge 6$. Thus $\cC(\cV)$ is either trivial, or $1$-dimensional, and as $\dim \cC(J) \ge 6$, we obtain that $\dim \cC_P(J) \ge 5$. On the other hand, $\dim \cC_P(J) \le \dim \cC(J) < 8$, as $J \notin \cC(J)$, and so $\cC(J)$ is a proper subalgebra of $\s$.
	
	By Lemma~\ref{l:W}\eqref{it:W123}, the subgroup $\sP \subset \SO(\cV)$ acts on $\cV$ (almost effectively and) with cohomogeneity $c \in \{1,2,3\}$. From~\cite[Tables I, II and III]{Str} we obtain that the only possible cases for such action when $4 \le \dim \cV \le 7$ and $5 \le \dim \cC_P(J) \le 7$ are as follows: either $\cP(\cV) = \so(5)$ acting by its standard representation on $\br^5$, $\dim \cV = 5, 6, 7$ (in the latter two cases, $\cP(\cV)$ acts trivially on $(\br^5)^\perp \subset \cV$), or $\cP(\cV) = \so(5) \oplus \so(2)$ acting by its standard representations on $\cV = \br^7= \br^5 \oplus \br^2$. In both cases, $\cC_P(J) = \so(4)$. Then by Lemma~\ref{l:redst}\eqref{it:redPir}, the representation of the ideal $\so(5) \subset \cP(\cV)$ on $\br^8$ is the standard representation of $\spg(2)$. But then the subspace $\cV$ (of dimension at least $5$) is an $\spg(2)$-submodule in $\so(8)$, and so in the notation of~\eqref{eq:sp2modules1} and Lemma~\ref{l:sp2R8} we obtain $\cV = \cV_{d,V}$, where $d$ is a unit imaginary quaternion and $V \subset \Im \bh$. Any such subspace $\cV_{d,V} \subset \so(8)$ is GO by Lemma~\ref{l:sp2R8}\eqref{it:sp2R8GO}, and then if we want to avoid $\cV_{d,V}$ to be singular (Lemma~\ref{l:sp2R8}\eqref{it:sp2R8nonsing}) or to be of Clifford type Lemma~\ref{l:sp2R8}\eqref{it:sp2R8types}, we obtain one of the spaces as in Theorem~\ref{th:class}(\ref{it:th6}, \ref{it:th7}) --- see Remark~\ref{rem:J67} in Section~\ref{s:examples}.
\end{proof}

In case $\dim \ag < 8$, we obtain that either $\dim \cV \le 2$, or $\dim \ag = 4$ and $\dim \cV = 3$ -- see~\eqref{eq:RH} below. In both cases the subspace $\cV$ is of centralizer type: when $\dim \cV \le 2$, we know this from Section~\ref{ss:centr}, and when $\dim \ag = 4$ and $\dim \cV = 3$, all such subspaces are known from \cite[Example~2.14]{Gor}. Hence we obtain the GO subspaces from cases~\eqref{it:th1}, \eqref{it:th2} or \eqref{it:th3}\eqref{it:th3cen} of Theorem~\ref{th:class}.

\section{Proof of Theorem~\ref{th:class}}\label{s:irst}

\subsection{Preparation}
\label{ss:prep}
First of all we note that the `if' direction of Theorem~\ref{th:class} is already proved in the earlier Sections. Indeed, the Clifford type is fully covered by Theorem~\ref{tha:Cliff}. Furthermore, the pairs in cases \eqref{it:th1}, \eqref{it:th2} and \eqref{it:th3}\eqref{it:th3cen} of Theorem~\ref{th:class} are of centralizer type. The fact that they are indeed GO is established in Section~\ref{ss:centr}. The pairs in Theorem~\ref{th:class}~\eqref{it:th3}\eqref{it:th3rep} are shown to be GO in Section~\ref{ss:dim3}, and for the pairs Theorem~\ref{th:class}~\eqref{it:th6}\eqref{it:th7} in  Lemma~\ref{l:dima8}.

Second, for the rest of the proof we can (and will) assume that \emph{the inner product $\ip$ on $V$ is standard}, and so $V$ is a GO subspace in our terminology. Indeed, suppose Theorem~\ref{th:class} is already established in the assumption that $\ip$ is standard and we want to find all admissible inner products. In cases \eqref{it:th1}, \eqref{it:th2} and \eqref{it:th3}\eqref{it:th3cen} of Theorem~\ref{th:class}, the subspace $V$ is of centralizer type, and so any inner product is admissible. The fact that only standard inner product is admissible for subspaces in Theorem~\ref{th:class}~\eqref{it:th3}\eqref{it:th3rep} is shown in Section~\ref{ss:dim3}. Admissible inner products for subspaces in Theorem~\ref{th:class}~\eqref{it:th6} and~\eqref{it:th7} are found in the end of Section~\ref{ss:dima8}.

Third, the non-singular GO subspaces of centralizer, Clifford and Rep type are fully understood --- see Sections~\ref{ss:centr}, \ref{ss:cliff} and \ref{ss:gorep}, respectively. For brevity, we call a GO subspace of neither of these three types a \emph{GO subspace of common type}. If $V$ is of common type, we call an element $J \in \cV$ \emph{generic}, if
  \begin{enumerate}[label=(\roman*),ref=\roman*]
    \item \label{it:genJnotinN}
    $J \notin \cN(\cV)$,

    \item \label{it:genJmaxeigen}
    $J^2$ has the maximal number of pairwise distinct eigenvalues among the elements of $\cV$ (at least two, by our assumption), and

    \item \label{it:genJreg}
    $J$ is a regular element of the action of the identity component $\sN \subset \SO(\ag)$ of the normalizer of $\cV$ in $\SO(\ag)$ (note that $\sN$ is compact and hence this action is proper).
  \end{enumerate}
Notice that generic elements form an open and dense subset of $\cV$. Moreover, their centralizers of generic elements in $\cN(\cV)$ are conjugate by elements of $\sN$ by the Principal Orbit Theorem.

Furthermore, we recall that the maximal dimension of a non-singular subspace $\cV \subset \so(\ag)$ (not necessarily GO) is controlled by the Radon-Hurwitz number: if $\dim \ag = 2^{4b+c}u$, where $u$ is odd and $0 \le c \le 3$, then
\begin{equation}\label{eq:RH}
  \dim \cV \le 2^c + 8 b - 1.
\end{equation}

We also make the following simple, but useful observation.

\begin{lemma} \label{l:dimkerN}
  Suppose $(\cV,\ipr)$ is a GO-pair, where the subspace $\cV \subset \so(\ag)$ is non-singular. If $\ag$ is $\cV$-irreducible, then for any nonzero element $N \in \Ng(\cV)$, we must have $\dim \Ker N \le \frac12 \dim \ag$.
\end{lemma}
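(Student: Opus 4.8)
The plan is to argue by contradiction: suppose $N \in \Ng(\cV)$ is nonzero with $\dim \Ker N > \tfrac12 \dim \ag$, and derive that $\cV$ contains a singular element, contradicting non-singularity. The natural first step is to set $L = \Ker N$, so that $\dim L > \tfrac12 \dim \ag$, and to consider, for a fixed $J \in \cV$, the subspace $L \cap J^{-1}L$. By dimension count, $\dim(L \cap J^{-1}L) \ge 2\dim L - \dim \ag > 0$, so there is a nonzero $X \in L$ with $JX \in L$ as well; in fact one gets a whole subspace of such $X$. The idea is then to exploit the GO condition at the pair $(J,X)$: there is $N' = N(J,X) \in \Ng(\cV)$ with $[N',J]=0$ and $N'X = JX$.

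The key point I would push is that $N$ itself can be used to produce a singular element of $\cV$, using the skew-normalizer structure. Since $N \in \Ng(\cV)$, the operator $(\ad_N)_{|\cV}$ is skew-symmetric on $\cV$, so it is semisimple with purely imaginary eigenvalues; decompose $\cV$ into $\ad_N$-invariant pieces. On $\Ker\big((\ad_N)_{|\cV}\big) = \cC_\cV(N)$ we have elements $J$ with $[N,J]=0$, hence $J$ preserves $L = \Ker N$ and $L^\perp$; restricting such a $J$ to $L$ and using that $\dim L > \tfrac12 \dim \ag \ge \dim L^\perp$, a rank argument should force $\pi_{L^\perp}(J)$ — or rather a suitable combination — to have nontrivial kernel, making $J$ singular unless $\cC_\cV(N)$ is very small. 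So I expect the argument to split according to the size of $\cC_\cV(N)$ relative to $\cV$, and in the complementary (non-centralizing) directions one uses the $\ad_N$-rotation together with the GO condition to again manufacture an $X$ with $JX = 0$ for some $0 \ne J \in \cV$. Throughout, $\cV$-irreducibility of $\ag$ is what prevents $\Ker N$ (an $\Ng(\cV)$-, hence by Lemma~\ref{l:redst}\eqref{it:redN} a $\cV$-, invariant subspace when $N \in \cC(\cV)$) from being all of $\ag$ or forcing degenerate splittings — indeed if $[N,\cV]=0$ then $\Ker N$ is genuinely $\cV$-invariant and irreducibility gives $\Ker N = 0$, handling that case immediately and showing the only real work is when $N \notin \cC(\cV)$.

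Concretely, here is the order I would carry out the steps. First, dispose of the case $N \in \cC(\cV)$: then $\Ker N$ is $\cV$-invariant by Lemma~\ref{l:redst}\eqref{it:redN} (applied with $\Ng(\cV) \supset \cC(\cV)$), so by $\cV$-irreducibility $\Ker N \in \{0, \ag\}$; since $N \ne 0$ we get $\Ker N = 0$, and $0 \le \tfrac12\dim\ag$ trivially — wait, that is not what we want, so instead note $\Ker N = 0$ gives $\dim\Ker N = 0 \le \tfrac12\dim\ag$, consistent with the claim. Second, assume $N \notin \cC(\cV)$ and set $L = \Ker N$, $r = \dim L$; suppose $r > \tfrac12 \dim \ag$. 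Third, pick any $J \in \cV$; since $\dim(L \cap JL^\perp)$... more carefully, consider the map $X \mapsto \pi_{L^\perp}(JX)$ from $L$ to $L^\perp$: its kernel $L_0 = \{X \in L : JX \in L\}$ has dimension $\ge r - \dim L^\perp = 2r - \dim\ag > 0$. Fourth, for $X \in L_0$ invoke the GO condition to get $N' = N(J,X)$ with $N'X = JX$ and $[N',J]=0$, and study the relation between $N$, $N'$, $J$ on the subspace $L_0$; the aim is to show $J$ restricted to $L_0$ — or its composition with the projection $\pi_L$ — cannot be injective, yielding $X \ne 0$ with $JX = 0$, i.e. $J$ singular. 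The main obstacle will be exactly this fourth step: extracting from the single scalar/vector identities $N'X = JX$ and $[N',J]=0$ enough control to contradict non-singularity. I expect one needs to feed in a cleverly chosen $J$ (or to run the argument for generic $J \in \cV$ and use that $\Ng(\cV)$-orbits cover $\cV$), and possibly to use that $N$ and $N'$ together generate a compact group acting on $\ag$, so that the analysis of kernels can be done on the Lie-algebra level via the structure of $\Ng(\cV)$ from Lemma~\ref{l:red}. If a direct argument proves elusive, the fallback is to reduce to the $\Pg(\cV)$-isotypic decomposition of $\ag$ (Lemma~\ref{l:redst}\eqref{it:redPir}) where $\cC(\cV)$ and the complex/quaternionic structures give explicit block forms for $N$, and check the inequality block by block.
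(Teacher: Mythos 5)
Your setup is correct up to a point — the dimension count on $L=\Ker N$ and the reduction to $N\notin\cC(\cV)$ are both sound moves, and your handling of the $N\in\cC(\cV)$ case is fine (though the paper instead observes that nonzero elements of $\cC(\cV)$ square to a negative multiple of the identity by Lemma~\ref{l:red}\eqref{it:redCdiva}, hence are invertible, which dispatches that case without invoking irreducibility). But from your third step onward the plan is aimed at the wrong target: you try to show that some $J\in\cV$ \emph{itself} is singular, by squeezing $JX=0$ out of the GO condition on $L_0$. That cannot work as stated, because if $\cV$ is non-singular then every nonzero $J\in\cV$ is injective on \emph{every} subspace, so $J|_{L_0}$ being non-injective is equivalent to $J$ being singular and there is nothing forcing that. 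The GO condition gives you $N'X=JX$ with $[N',J]=0$, which constrains $J$ along one vector $X$ at a time; it does not localize $J$'s kernel, and you correctly flag this as ``the main obstacle.''

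The gap is that you looked at $J$ when you should have looked at $[N,J]$. Since $N\notin\cC(\cV)$, there is $J\in\cV$ with $[N,J]\ne 0$, and since $N\in\Ng(\cV)$ we have $[N,J]\in\cV$. Now for $X\in\Ker N$ one computes $[N,J]X=NJX-JNX=NJX\in\operatorname{Im}N$, so $[N,J]$ restricts to a linear map $\Ker N\to\operatorname{Im}N$. As $N$ is skew-symmetric, $\operatorname{Im}N=(\Ker N)^\perp$, hence $\dim\operatorname{Im}N=\dim\ag-\dim\Ker N<\tfrac12\dim\ag<\dim\Ker N$, so this restricted map has a nontrivial kernel. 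That produces a nonzero $X$ with $[N,J]X=0$, i.e.\ a nonzero singular element of $\cV$ — a contradiction. Note this uses only the normalizer structure and non-singularity, not the GO condition; the ``GO-pair'' hypothesis in the lemma is inherited from context but plays no role in the proof. Your ``fallback'' of passing to the $\Pg(\cV)$-isotypic block structure is also unnecessary once the commutator trick is in hand.
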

\begin{proof}
  Suppose that the kernel of a nonzero $N \in \Ng(\cV)$ is of dimension greater than $\frac12 \dim \ag$. Such $N$ cannot lie in $\cC(\cV)$ (as by Lemma~\ref{l:red}\eqref{it:redCdiva} all nonzero elements of $\cC(\cV)$ square to a multiple of the identity), and so there exists $J \in \cV$ such that the element $[N, J]$ (which still lies in $\cV$) is nonzero. But then $[N, J]$ must be singular, a contradiction.
\end{proof}

\medskip

The proof of Theorem~\ref{th:class} goes as follows. Suppose we are given a Euclidean space $(\ag, \ip)$ and a non-singular subspace $\cV \subset \so(\ag)$ with a standard inner product $\ipr$. If $L \subset \ag$ is $\cV$-invariant, then by Lemma~\ref{l:red}\eqref{it:redV}, the pair $(\pi_L (\cV), \ipr_L)$ is also GO, and it is clear that the subspace $\pi_L (\cV) \subset \so(L)$ is non-singular as well. This reduces the classification of non-singular GO spaces to the case when $\ag$ is $\cV$-irreducible. As soon as this classification is known, `gluing' irreducible subspaces back together can be done using Lemma~\ref{l:red} --- see Section~\ref{ss:redu}.

The classification of GO subspaces $\cV$, where $\ag$ is $\cV$-irreducible is given in Proposition~\ref{p:irst} below. The core of the proof is Proposition~\ref{p:dimbound} in Section~\ref{ss:dim}, in which from the study of the action of the Lie group $\sN \subset \SO(\ag)$ which normalizes $\cV$, we establish an upper bound for the dimension of $\ag$ is in terms of the dimension of $\cV$. Combining this upper bound with the lower bound coming from~\eqref{eq:RH} we obtain a finite list of possible cases for the dimensions of $\ag$ and of $\cV$, as well as the possible candidates for the pure normalizer $\cP(\cV)$ and the centralizer $\cC(\cV)$ in each case (recall that by Lemma~\ref{l:red}\eqref{it:redCdiva}, for $\cV$-irreducible $\ag$, the centralizer $\cC(\cV)$ is either trivial or is spanned by a complex or a quaternionic structure on $(\ag,\ip)$). This list is given in Table~\ref{tab:dims1}. In Section~\ref{ss:cases} we go through this list and show that if $\dim \ag > 8$, then $V$ may only be of either Clifford, or centralizer or Rep type. The proof of Proposition~\ref{p:irst} is completed by Lemma~\ref{l:dima8} which treats the case $\dim \ag = 8$.

\medskip

In the next two sections we prove the following.

\begin{proposition} \label{p:irst}
  Suppose $\cV \subset \so(\ag)$ is a non-singular GO subspace of common type such that the space $\ag$ is $\cV$-irreducible (and the inner product on $\cV$ is standard). Then $\dim\ag=8$ and $\cV$ is one of the spaces in Theorem~\ref{th:class} \eqref{it:th6} or \eqref{it:th7}.
\end{proposition}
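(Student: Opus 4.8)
The plan is to prove Proposition~\ref{p:irst} in two stages, following the strategy already announced in the excerpt: first establish an upper bound for $\dim\ag$ in terms of $\dim\cV$ via the action of the normalizer $\sN\subset\SO(\ag)$, and then combine this with the Radon--Hurwitz lower bound~\eqref{eq:RH} to reduce to finitely many cases, each of which we dispatch. Throughout we may assume $\ag$ is $\cV$-irreducible, that the inner product on $\cV$ is standard (so $\Ng(\cV)=\cN(\cV)$ and $\Pg(\cV)=\cP(\cV)$), and that $\cV$ is of common type, i.e. not of Clifford, centralizer, or Rep type.

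\medskip

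\textbf{Step 1: dimension bounds.} I would pick a \emph{generic} element $J\in\cV$ in the sense of the three conditions~\eqref{it:genJnotinN}--\eqref{it:genJreg} listed above, and study its centralizer $\cC(J)$ in $\cN(\cV)$. Since $J\notin\cN(\cV)$ but $[N,J]\in\cV$ for all $N\in\cN(\cV)$ (and $[N,J]$ must be invertible or zero by non-singularity), the ``missing'' elements of $\cV$ near $J$ force $\cC(J)$ to be large; more precisely, $\ad_J\colon\cN(\cV)\to\cV$ has image of dimension $\dim\cN(\cV)-\dim\cC(J)$, while its cokernel in $\cV$ is controlled because $\sN$ must act on $\cV$ with enough orbits to realize all the vectors $JX$, $X\in\ag$ (Lemma~\ref{l:W} plus Corollary~\ref{c:GOct} limit the cohomogeneity to $\{1,2,3\}$ in the non-singular case). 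Combined with Lemma~\ref{l:dimkerN} (every nonzero $N\in\cN(\cV)$ has $\dim\Ker N\le\tfrac12\dim\ag$), the eigenvalue structure of $J$ constrains $\cC(J)$ to sit inside $\ug(n_1)\oplus\cdots\oplus\ug(n_r)$ where $2n_1,\dots,2n_r$ are the multiplicities of the eigenvalue-pairs of $J$; the constraint $\dim\Ker([N,J])\le\tfrac12\dim\ag$ bounds how unbalanced these multiplicities can be. Running this analysis — essentially a counting argument pairing $\dim\cC(J)$ against $\dim\cV$ and $\dim\ag$, using that regular isotropy subalgebras of cohomogeneity $\le 3$ actions of compact groups on spheres/Euclidean spaces are classified (the Straume tables~\cite{Str} and the non-almost-free list~\cite{HH}) — yields the finite Table~\ref{tab:dims1} of admissible triples $(\dim\ag,\dim\cV,\cP(\cV),\cC(\cV))$. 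This is the content of Proposition~\ref{p:dimbound}, which I would invoke here; the hard part of the whole proof lives inside it, namely extracting a clean numerical inequality from the interplay of the isotropy representation of $\sN$ on $\cV$ with the eigenvalue multiplicities of $J$ on $\ag$.

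\medskip

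\textbf{Step 2: eliminating $\dim\ag>8$.} For each row of Table~\ref{tab:dims1} with $\dim\ag>8$ I would argue that $\cV$ is forced to be Clifford, centralizer, or Rep type, contradicting the common-type assumption. The main tool is Lemma~\ref{l:redst}\eqref{it:redPir}: the irreducible $\cP(\cV)$-submodules of $\ag$ are mutually isomorphic, of real/complex/quaternionic type correlated with $\cC(\cV)$, and have dimension dividing $\dim\ag$; this, together with the candidate $\cP(\cV)$ in the table, severely restricts the representation and typically identifies $\cV$ with a known family. Where $\cP(\cV)$ acts on $\cV$ with trivial principal isotropy, Lemma~\ref{l:forc}\eqref{it:large} gives centralizer type; where $\cP(\cV)$ is abelian, Lemma~\ref{l:forc}\eqref{it:cpab} does the same; where $\cV$ turns out to be a subalgebra (detectable from $[\cV,\cV]\subset\cV$, which one checks on generators), it is Rep type and Section~\ref{ss:gorep} applies; and the remaining large-dimension rows, where $J^2$ is forced to be a scalar, give Clifford type via Theorem~\ref{tha:Cliff}. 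This is the case-by-case sweep of Section~\ref{ss:cases}. The only surviving dimension is $\dim\ag=8$.

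\medskip

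\textbf{Step 3: the case $\dim\ag=8$.} Here I simply invoke Lemma~\ref{l:dima8}: a non-singular GO subspace $\cV\subset\so(8)$ with $\ag$ $\cV$-irreducible is of Clifford type, of Rep type, or one of the spaces in Theorem~\ref{th:class}\eqref{it:th6}--\eqref{it:th7}. Since $\cV$ is assumed of common type, the first two are excluded, so $\cV$ is as in~\eqref{it:th6} or~\eqref{it:th7}, completing the proof of Proposition~\ref{p:irst}. I expect Step~1 (Proposition~\ref{p:dimbound}) to be the genuine obstacle — producing a sharp enough dimension bound requires carefully balancing the size of the normalizer's orbits on $\cV$ against the kernel constraint of Lemma~\ref{l:dimkerN} — whereas Steps~2 and~3, though lengthy, are mechanical given the table and the structural lemmas of Section~\ref{s:prelim}.
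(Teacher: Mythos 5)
Your three-step skeleton matches the paper's exactly: Proposition~\ref{p:dimbound} to produce Table~\ref{tab:dims1}, a case sweep to eliminate $\dim\ag>8$, and Lemma~\ref{l:dima8} for $\dim\ag=8$. Steps~2 and~3 are correctly identified, and since the paper itself treats Proposition~\ref{p:dimbound} as a separate result, invoking it is fair.

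However, your informal account of \emph{how} the dimension bound in Step~1 is produced misidentifies the mechanism and, as sketched, would not yield the paper's inequality. You describe controlling ``the cokernel of $\ad_J\colon\cN(\cV)\to\cV$'' by ``$\sN$ acting on $\cV$ with enough orbits to realize the vectors $JX$''; but the vectors $JX$ live in $\ag$, not $\cV$, and the GO condition constrains the action of $\cC(J)$ on $\ag$, not the adjoint action on $\cV$. The paper's actual engine is different: for a generic $J$ and a point $X\in\ag$ regular for the (orbit-equivalent) action of $\overline{\sC}(J)$, the stabilizer $\overline{\cC}(J)_X$ is nontrivial (it contains $N-J$ with $NX=JX$), and the common kernel $L_X=\bigcap\{\Ker\overline N\mid\overline N\in\overline{\cC}(J)_X\}$ must contain the full normal slice to the orbit plus a tangent block, giving the lower bound~\eqref{eq:dimLX} for $d_X=\dim L_X$. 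Pairing this against the upper bound $d_X\le\tfrac34\dim\ag$ (obtained by two applications of Lemma~\ref{l:dimkerN}) is what forces $\dim\cC(J)\gtrsim\tfrac14\dim\ag$, and only then do the cohomogeneity constraint from Lemma~\ref{l:W} and the Straume/Hsiang--Hsiang tables enter, in the two subcases $d_X>\tfrac12\dim\ag$ and $d_X\le\tfrac12\dim\ag$. The slice-dimension estimate via $L_X$ is the idea you are missing; without it, the ``counting argument pairing $\dim\cC(J)$ against $\dim\cV$ and $\dim\ag$'' has no leverage. A smaller inaccuracy: your Step~2 attributes the eliminations to Lemma~\ref{l:forc} and Rep-type detection, but the arguments of Section~\ref{ss:cases} actually proceed by decomposing $\Lambda^2$ of the relevant spin module, showing either that no $\cN(\cV)$-submodule of the right dimension exists or that every candidate forces $J^2$ to be scalar, and then contradicting Theorem~\ref{tha:Cliff}; Lemma~\ref{l:forc} plays no role there.
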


Note that in the assumptions of Proposition~\ref{p:irst}, we can assume that $\dim \ag \ge 8$. Furthermore, the proof of Proposition~\ref{p:irst} in case $\dim \ag = 8$ is given in Lemma~\ref{l:dima8}, \emph{assuming} certain inequalities which we prove in the next section.

Recall that in the assumptions of Proposition~\ref{p:irst}, $\ag$ is an irreducible $\cN(\cV)$-module, by Lemma~\ref{l:redst}\eqref{it:redN}. From Lemma~\ref{l:red}\eqref{it:redCdiva} we know that $\cC(\cV)$, if it is nontrivial, is spanned by a complex or by a quaternionic structure on $(\ag,\ip)$, and then both $\cV$ and $\cP(\cV)$ lie in the corresponding subalgebras $\su(\frac12 \dim \ag)$ and $\spg(\frac14 \dim \ag)$ of $\so(\ag)$, respectively.

\subsection{Dimension bounds}
\label{ss:dim}

Denote $\sN, \sC$ and $\sP$ the connected Lie subgroups of $\SO(\ag)$ with the Lie algebras $\cN(\cV), \cC(\cV)$ and $\cP(\cV)$, respectively. Note that $\sN$ is the (almost) direct product of $\sC$ and $\sP$, and that all three subgroups are compact (as the normalizer and the stabilizer subgroups of any subset of $\ag$ are compact).

For $J \in \cV$, denote $\cC(J)$ its centralizer in $\cN(\cV)$. Note that $\cC(J)=\cC_P(J) \oplus \cC(\cV)$ (orthogonal decomposition into ideals), where $\cC_P(J)$ is the centralizer of $J$ in the pure normalizer of $\cV$, that is, $\cC_P(J) = \cC(J) \cap \cP(\cV)$. At the group level, we consider the subgroup $\sC(J)$ which is the identity component of the intersection of the stabilizer of $J$ in $\SO(\ag)$ with $\sN$. Note that $\sC(J)$ is also compact.

We now assume that $J$ is generic and denote $\overline{\cC}(J) = \cC(J) \oplus \br J$. Let $\overline{\sC}(J) \subset \SO(\ag)$ be the connected subgroup with the Lie algebra $\overline{\cC}(J)$. We make the following observation.

\begin{observation*} %\label{obs:proper}
  At the Lie algebra level, the algebra $\overline{\cC}(J)$ is the direct sum of the ideal $\cC(J)$ and the $1$-dimensional ideal $\br J$. At the Lie group level, $\overline{\sC}(J)$ is the (almost) direct product of the compact group $\sC(J)$ and the $1$-dimensional subgroup $\{\exp (tJ) \, | \, t \in \br\} \subset \SO(\ag)$. If all the eigenvalues of $J$ are pairwise rationally dependent, then the latter subgroup is compact, as also is $\overline{\sC}(J)$, and hence $\overline{\sC}(J)$ acts properly on $\ag$. However if they are not, then $\overline{\sC}(J) \cong \sC(J) \times \br$, and then the action of $\overline{\sC}(J)$ on $\ag$ is improper. Luckily, the GO property comes to the rescue here. Indeed, by~\eqref{eq:NXJX}, for any $X \in \ag$, we can find $N \in \cC(J)$ such that $NX=JX$, and as $N$ and $J$ commute, it follows that $\exp (tN)X = \exp (tJ) X$, for all $t \in \br$. Thus, although $J \notin \cC(J)$, the orbit $\exp (tJ) X$ lies on the orbit $\sC(J) X$, for all $X \in \ag$, and so the (potentially improper) action of $\overline{\sC}(J)$ and the proper action of the compact group $\sC(J)$ on $\ag$ are \emph{orbit-equivalent}: they have the same orbits. Following the standard proofs, we see that the action of $\overline{\sC}(J)$ enjoys the same nice properties as a proper action, of which we will need the following two.

  Firstly, the Principal Orbit Theorem: there is a principal orbit type and the union of principal orbits is open and dense in $\ag$; we call the points belonging to that union regular. Secondly, the normal slice representation of the stationary subalgebra of $\overline{\cC}(J)$ at a regular point is trivial. To see this, we notice that the closure of $\exp (tJ)$ in $\SO(\ag)$ is a flat torus $T$ which still commutes with $\sC(J)$, and moreover, for any point $X \in \ag$, the orbit $TX$ lies in the orbit $\sC(J) X$. Then the action of the compact group $\sC'(J)$ generated by $\sC(J)$ and $T$ is orbit-equivalent to the action of $\sC(J)$ and has the same regular points. Moreover, for the former action the normal slice representation at the regular points is trivial. As $\overline{\sC}(J)$ is a subgroup of $\sC'(J)$, its stationary subalgebra at any point is a subalgebra of the stationary subalgebra of the group $\overline{\sC}(J)$, and the claim follows \footnote{We are thankful to Ramiro Lafuente for discussing this argument with us.}.
\end{observation*}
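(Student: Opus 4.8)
The plan is to establish the three claims bundled into the Observation separately: (i) the ideal splitting $\overline{\cC}(J)=\cC(J)\oplus\br J$; (ii) orbit-equivalence of the (possibly improper) action of $\overline{\sC}(J)$ on $\ag$ with a genuinely proper action; (iii) the resulting Principal Orbit Theorem and triviality of the normal slice representation of $\overline{\cC}(J)$ at a regular point. For (i): by definition $\cC(J)$ is the centraliser of $J$ inside $\cN(\cV)$, so $[\cC(J),J]=0$, whence $\overline{\cC}(J)=\cC(J)+\br J$ is a subalgebra with $[\overline{\cC}(J),\overline{\cC}(J)]\subseteq\cC(J)$, and both $\cC(J)$ (a subalgebra centralising $J$) and $\br J$ (central) are ideals; the sum is direct because genericity forces $J\notin\cN(\cV)\supseteq\cC(J)$, so $\dim\overline{\cC}(J)=\dim\cC(J)+1$. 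At the group level $\overline{\sC}(J)$ is generated by the commuting subgroups $\sC(J)$ and $\{\exp(tJ)\mid t\in\br\}$; if the eigenvalues of $J$ are pairwise rationally dependent the second factor is a closed circle, $\overline{\sC}(J)$ is compact, and its action on $\ag$ is automatically proper, so the only case needing work is when $\{\exp(tJ)\}\cong\br$ is non-closed and $\overline{\sC}(J)\cong\sC(J)\times\br$ acts improperly.

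For (ii), which is where the GO hypothesis is used and is really the crux, I would argue as follows. Fix $X\in\ag$. The GO condition~\eqref{eq:NXJX} for the pair $(J,X)$ produces $N=N(J,X)\in\cC(J)$ with $NX=JX$; since $[N,J]=0$ this upgrades to $\exp(tJ)X=\exp(tN)X$ for all $t\in\br$, so $\{\exp(tJ)X\}\subseteq\sC(J)X$. Combined with the obvious $\sC(J)$-invariance of $\sC(J)X$, this shows that the $\overline{\sC}(J)$-orbit of $X$ coincides with the $\sC(J)$-orbit of $X$. Hence the possibly improper action of $\overline{\sC}(J)$ and the proper action of the compact group $\sC(J)$ induce the same orbit decomposition of $\ag$.

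For (iii), I would transfer the structure theory from the proper $\sC(J)$-action across this orbit-equivalence. The Principal Orbit Theorem then reads off immediately: the union of principal $\sC(J)$-orbits is open and dense in $\ag$, and these points are precisely the regular points of $\overline{\sC}(J)$. For triviality of the normal slice representation at a regular $X$, I would pass to the compact group $\sC'(J)$ generated by $\sC(J)$ and the closure torus $T=\overline{\{\exp(tJ)\}}$, which commutes with $\sC(J)$; continuity of the orbit map together with density of $\{\exp(tJ)\}$ in $T$ and closedness of $\sC(J)X$ gives $TX\subseteq\sC(J)X$, so $\sC'(J)$ has the same orbits, and the same regular points, as $\sC(J)$. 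At a principal orbit of the compact group $\sC'(J)$ the isotropy acts trivially on the normal space, by the standard characterization of principal orbits; and since $\overline{\sC}(J)\subseteq\sC'(J)$ with identical orbits, the stationary subalgebra $\overline{\cC}(J)_X$ sits inside $\Lie(\sC'(J))_X$, so its slice representation is a restriction of the trivial one and is therefore trivial.

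The main obstacle here is conceptual rather than computational: an improper group action does not a priori carry a slice theorem or a principal-orbit theorem, so one cannot simply quote the usual structure theory; the resolution, and the one genuinely new ingredient, is to use the GO identity $NX=JX$ to replace $\overline{\sC}(J)$ by a proper action (that of $\sC(J)$, or of $\sC'(J)$) with exactly the same orbits, after which everything is classical.
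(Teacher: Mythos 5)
Your proposal is correct and follows essentially the same route as the paper: you prove the ideal decomposition from centrality of $J$, establish orbit-equivalence of the $\overline{\sC}(J)$- and $\sC(J)$-actions via the GO identity $NX=JX$ upgraded to $\exp(tN)X=\exp(tJ)X$, and then pass to the compact group $\sC'(J)$ generated by $\sC(J)$ and the closure torus $T$ to read off the Principal Orbit Theorem and the triviality of the slice representation. The only cosmetic difference is that you spell out the density/closedness argument for $TX\subseteq\sC(J)X$, which the paper merely asserts.
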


We prove the following.

\begin{proposition} \label{p:dimbound}
  In the assumptions of Proposition~\ref{p:irst}, there is a finite number of possible cases for the dimensions of $\ag$ and $\cV$, the algebras $\cP(\cV)$ and $\cC(\cV)$, and the representations of $\cP(\cV)$ on $\cV$ and on $\ag$. They are listed in Table~\ref{tab:dims1} below (where in the last two columns, $\br^n$ and $\bc^n$ mean the standard representations, $\Delta_n$ the spin representation and $\t$ a trivial representation).

{\renewcommand{\arraystretch}{1.4}
  \begin{table}[h!]
    \caption{Possible cases for $\ag, \cV, \cP(\cV)$ and $\cC(\cV)$.}
    \centering
    \begin{tabular}{|c|c|c|c|c|c|c|}
      \hline
       \#  & $\dim \ag$ & $\dim \cV$ & $\cP(\cV)$ & $\cC(\cV)$ & $\cP(\cV)$ \emph{on} $\cV$ & $\cP(\cV)$ \emph{on} $\ag$ \\
      \hline
      $1$ & $256$ & $15$ & $\so(15)$ & $\so(2)$ & $\br^{15}$ & $2 \Delta_{15}$\\
      \hline
      $2$ & $128$ & $15$ & $\so(15)$ & $0$ & $\br^{15}$ & $\Delta_{15}$\\
      \hline
      $3$ & $128$ & $15,14$ & $\so(14)$ & $0, \so(2)$ & $\br^{14} \oplus \t$ & $\Delta_{14}$\\
      \hline
      $4$ & $128$ & $15, 14, 13$ & $\so(13)$ & $0, \so(2), \spg(1)$ & $\br^{13} \oplus \t$ & $\Delta_{13}$\\
      \hline
      $5$ & $64$ & $11$ & $\so(11)$ & $0, \so(2), \spg(1)$ & $\br^{11}$ & $\Delta_{11}$\\
      \hline
      $6$ & $64$ & $11,10$ & $\so(10)$ & $\spg(1)$ & $\br^{10} \oplus \t$ & $2\Delta_{10}$\\
      \hline
      $7$ & $64$ & $9$ & $\so(9)$ & $\spg(1)$ & $\br^{9}$ & $4\Delta_{9}$\\
      \hline
      $8$ & $32$ & $9$ & $\so(9)$ & $\so(2)$ & $\br^{9}$ & $2\Delta_{9}$\\
      \hline
      $9$ & $32$ & $9,8$ & $\so(8)$ & $\spg(1)$ & $\br^{8} \oplus \t$ & $4\Delta^{\pm}_{8}, 4 \br^8$\\
      \hline
      $10$  & $32$ & $7$ & $\so(7)$ & $\spg(1)$ & $\br^{7}$ & $4\Delta_{7}$\\
      \hline
      $11$ & $16$ & $8$ & $\so(8)$ & $\so(2)$ & $\br^{8}$ & $2\Delta^{\pm}_{8}, 2 \br^8$\\
      \hline
      $12$ & $16$ & $8,7$ & $\so(7)$ & $\so(2)$ & $\br^{7} \oplus \t$ & $2\Delta_{7}$\\
      \hline
      $13$ & $16$ & $8$ & $\so(7)$ & $\so(2)$ & $\Delta_{7}$ & $2\Delta_{7}$\\
      \hline
      $14$ & $16$ & $8,7,6$ & $\so(6)$ & $\spg(1)$ & $\br^6 \oplus \t$ & $2\Delta_{6}$\\
      \hline
      $15$ & $16$  & $8$ & $\so(6)$ & $\spg(1)$ & $\bc^4$ \emph{(as $\su(4)$)} & $2\Delta_{6}$  \\
      \hline
      $16$ & $16$  & $8$ & $\so(6) \oplus \so(2)$ & $\spg(1)$ & $\br^6 \oplus \br^2$ & $\Delta_{6} \hat\otimes \br^2_{\so(2)}$  \\
      \hline
      $17$ & $16$  & $8$ & $\so(6) \oplus \so(2)$ & $\spg(1)$ & $\bc^4$ \emph{(as $\ug(4)$)} & $\Delta_{6} \hat\otimes \br^2_{\so(2)}$  \\
      \hline
      $18$ & $8$ & $7,6,5,4$ &  \multicolumn{4}{c|}{$\dim \cC(J) \ge 6$} \\
      \hline
    \end{tabular}
    \label{tab:dims1}
    \end{table}
}
\end{proposition}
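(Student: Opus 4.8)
The plan is to reduce the whole question to a classification of the action of the compact group $\sP$ (with Lie algebra $\cP(\cV)$) on the vector space $\cV$, and then, case by case, to pin down $\dim\ag$ by combining representation theory with a dimension count on the $\sN$-action made available by the GO property and the Observation above.

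First I would use that $\cV$ is of common type, hence in particular not of centralizer type. By Lemma~\ref{l:forc} this forces $\cP(\cV)$ to be non-abelian and the principal isotropy subalgebra of the $\sP$-action on $\cV$ to be nontrivial; recall also that $\cP(\cV)$ acts faithfully on $\cV$ and that $\cP(\cV)$ embeds into $\so(\cV)$ by Lemma~\ref{l:red}\eqref{it:redP}. By Lemma~\ref{l:W}\eqref{it:W123} this action has cohomogeneity $\dim W\in\{1,2,3\}$, so $\sP$ acts on the unit sphere of $\cV$ with cohomogeneity at most two. Faithful orthogonal representations of a non-abelian compact connected group of such low cohomogeneity and with non-finite principal isotropy are classified --- see \cite{HH} for the non-almost-free restriction and the tables in \cite{Str} for the low-cohomogeneity lists --- and this yields a short, explicit list of candidates for the pair $(\cP(\cV),\cV)$: up to a trivial summand in $\cV$ and a central circle factor, the ones surviving the later constraints turn out to be the standard representations of $\SO(n)$ and $\SU(n)$, the spin representation of $\SO(7)$, and $\SO(6)\times\SO(2)$ acting on $\br^6\oplus\br^2$ or on $\bc^4$. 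Inside each such family, the admissible value of $\dim\cV$ together with $\cC(\cV)$ is then restricted because $\cC(\cV)$ is an ideal of $\cN(\cV)$ acting trivially on $\cV$, is one of $0,\so(2),\spg(1)$ by Lemma~\ref{l:red}\eqref{it:redCdiva}, and $\ag$ must be $\cN(\cV)$-irreducible by Lemma~\ref{l:redst}\eqref{it:redN}.

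Next, fixing such a candidate, I would bound $\dim\ag$. By Lemma~\ref{l:redst}\eqref{it:redPir} the $\cP(\cV)$-module $\ag$ is a sum of $p\in\{1,2,4\}$ copies of a single irreducible $\ag_1$, with $p$ tied to the real, complex or quaternionic type of $\ag_1$ and to $\cC(\cV)$. Since $\cP(\cV)\subset\so(\ag)$ acts faithfully on $\ag$, Lemma~\ref{l:dimkerN} --- applied to a root vector of $\cP(\cV)$ --- shows that $\dim\Ker$ would exceed $\tfrac12\dim\ag$ if $\ag_1$ were the standard representation of $\SO(n)$ with $n>4$ (or a still larger representation), so that $\ag_1$ must be a spin representation $\Delta_n$ in the $\SO(n)$-cases, whose dimension is then fixed by $n$; hence $\dim\ag=p\dim\ag_1$ takes only finitely many values. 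A complementary compatibility constraint is that $\cV\subset\so(\ag)=\Lambda^2\ag$ is an $\cN(\cV)$-submodule, so its now-known module structure must be realizable inside $\Lambda^2(p\,\ag_1)$. The inequality $\dim\cV\le 2^c+8b-1$ of~\eqref{eq:RH} then excludes the configurations whose $\dim\ag$ is too small for the given $\dim\cV$, and the upper bound on $\dim\ag$ in terms of $\dim\cV$ coming from the $\sN$-analysis (next paragraph) excludes those with $\dim\ag$ too large; together these also force $\dim\cV\le 15$, and what remains is exactly rows~$1$--$17$ of Table~\ref{tab:dims1}. The leftover case $\dim\ag=8$ is treated apart: there I would not try to identify $\cP(\cV)$ but only record, from the same count, the inequality $\dim\cC(J)\ge 6$ for a generic $J$, which is precisely what Lemma~\ref{l:dima8} uses.

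The dimension count on the $\sN$-action is the technical heart, and I expect it to be the main obstacle. Choosing a generic $J\in\cV$ as in Section~\ref{ss:prep} (so $J\notin\cN(\cV)$, $J^2$ has the maximal number $\ge2$ of distinct eigenvalues, and $J$ is regular for the $\sN$-action), the GO condition gives for every $X\in\ag$ an $N\in\cC(J)$ with $NX=JX$, so that the orbits of $\sC(J)$ and of the group $\overline{\sC}(J)$ generated by $\sC(J)$ and $\{\exp(tJ)\mid t\in\br\}$ coincide; by the Observation this possibly improper action still has a principal orbit type whose normal slice representation is trivial. Restricting to an eigenspace of $J^2$, on which $\cC(J)$ acts and $J$ acts as a nonzero multiple of a complex structure, the relation $JX\in\cC(J)X$ forces the $\sC(J)$-orbits there to be large; comparing the orbit dimensions and the trivial normal slice with $\dim\ag$ produces the upper bound $\dim\ag\le f(\dim\cV)$, and in the eight-dimensional case the inequality $\dim\cC(J)\ge 6$. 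The bulk of the work is then to run this count uniformly and to verify, candidate by candidate, that only the configurations listed in Table~\ref{tab:dims1} are consistent with all of the above constraints simultaneously.
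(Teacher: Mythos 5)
The proposal captures the architecture of the argument in broad strokes, but the technical heart that actually produces the dimension bound on $\ag$ is missing, and you yourself flag it as such. Let me be specific.

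Your outline correctly identifies the main ingredients: $\cP(\cV)$ non-abelian with positive-dimensional principal isotropy on $\cV$ (Lemma~\ref{l:forc}), $\cP(\cV)\hookrightarrow\so(\cV)$ (Lemma~\ref{l:red}\eqref{it:redP}), cohomogeneity~$\le3$ from Lemma~\ref{l:W}\eqref{it:W123}, the Radon--Hurwitz lower bound~\eqref{eq:RH}, the module constraints from Lemma~\ref{l:redst}\eqref{it:redPir}, and, crucially, some upper bound $\dim\ag\le f(\dim\cV)$ extracted from the GO condition via the orbit-equivalence of $\overline{\sC}(J)$ and $\sC(J)$. But the last item is never derived: you write that ``comparing the orbit dimensions and the trivial normal slice with $\dim\ag$ produces the upper bound'' and immediately admit this is ``the main obstacle.'' That is exactly where the paper's proof does the real work. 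It fixes a generic $J$, a regular $X$, sets $L_X=\bigcap\{\Ker\overline N\mid\overline N\in\overline{\cC}(J)_X\}$ and $d_X=\dim L_X$, and proves the key inequality $d_X\ge\dim\ag-\dim\cC(J)-1+\dim\overline{\cC}(J)_X+\dim\h_X$ together with the cap $d_X\le\tfrac34\dim\ag$ (Lemma~\ref{l:ineq}), which rests on Lemma~\ref{l:dimkerN}. It then splits into the two regimes $d_X>\tfrac12\dim\ag$ and $d_X\le\tfrac12\dim\ag$: in the first, $\dim\overline{\cC}(J)_X=1$ and $\sP$ is forced to act \emph{transitively} on the unit sphere of $\cV$ (a much stronger constraint than cohomogeneity~$\le2$ on the sphere), yielding inequality~\eqref{eq:ccpjdX34}; in the second, the cohomogeneity bound is not used directly at all --- instead inequality~\eqref{eq:ccpjdX12} lower-bounds $\dim\cC_P(J)$, and the candidates for $\cP(\cV)$ are read off Dynkin's list of maximal subalgebras of $\so(n)$ combined with the nontrivial-principal-isotropy tables of~\cite{HH}. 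Without this dichotomy and its quantitative output, neither the bound $\dim\cV\le15$ nor the entries of Table~\ref{tab:dims1} follow; the Straume tables alone for cohomogeneity~$\le 2$ give a much longer and dimension-unbounded list.

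Two smaller points. First, your invocation of Lemma~\ref{l:dimkerN} on a root vector of $\cP(\cV)$ to exclude all multiples of the standard $\so(n)$-module for $n>4$ is a correct observation and not one the paper uses at this stage: the element $e_1\wedge e_2\in\so(n)$ has kernel of dimension $k(n-2)>\tfrac12 kn$ on $k\,\br^n$ whenever $n>4$, contradicting Lemma~\ref{l:dimkerN}. This would actually trim the $4\br^8$ and $2\br^8$ possibilities in rows~$9$ and~$11$, which the paper instead rules out later in Section~\ref{ss:cases}. But note this is additional pruning, not a substitute for the missing $\sN$-dimension count; it does nothing to bound $\dim\ag$ from above for spin modules. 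Second, the ``restrict to an eigenspace of $J^2$'' device you sketch is not what the paper does: the paper works with $L_X$ and the normal slice at a \emph{regular} $X$, not with $J^2$-eigenspaces, and the compactness trick in the Observation (replacing the possibly non-compact $\overline{\sC}(J)$ by $\sC(J)$ via orbit-equivalence) is essential to even speak of a principal orbit type with trivial normal slice representation. To make your sketch rigorous you would essentially have to reconstruct that argument.
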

\begin{proof}
  Recall that the assumptions of Proposition~\ref{p:irst} imply that $\dim \ag \ge 8$.

  Choose and fix $X \in \ag$ which is regular for the action of $\overline{\sC}(J)$. Let $\overline{\sC}(J)_X$ be the stabilizer subgroup of $X$, and $\overline{\cC}(J)_X \subset \overline{\cC}(J)$ its Lie algebra. By the GO property~\eqref{eq:NXJX}, the subalgebra $\overline{\cC}(J)_X$ is nontrivial and is not contained in $\cC(J)$, and $J \notin \overline{\cC}(J)_X$, as $J$ is non-singular. It follows that $\overline{\cC}(J)_X = \Span(N-J, N_1, \dots, N_k),\; k = \dim \overline{\cC}(J)_X - 1$, where $N \ne 0$ and $N, N_1, \dots, N_k \in \cC(J)$.

  Let $L_X = \cap \{\Ker \overline{N} \, | \, \overline{N} \in \overline{\cC}(J)_X\}$. As the orbit $\overline{\sC}(J) X$ is principal, the group $\overline{\sC}(J)_X$ acts trivially on the normal space $\nu_X (\overline{\sC}(J)X)$ (a neighbourhood of $X$ in which is a slice); recall that all the orbits of the action of $\overline{\sC}(J)$ are compact and coincide with the orbits of the action of the compact group $\sC(J)$. It follows that $L_X \supset \nu_X (\overline{\sC}(J)X)$. Furthermore, let $\h_X \subset \overline{\cC}(J)$ be the (maximal) subalgebra which centralises the subalgebra $\overline{\cC}(J)_X$ and is orthogonal to it relative to the restriction of the Killing form of $\so(\ag)$ to $\overline{\cC}(J)$. Note that the subspace $\h_X X$ is tangent to the orbit $\overline{\sC}(J) X$ at $X$ and is of dimension $\dim \h_X$, as the projection from the orthogonal complement to $\overline{\cC}(J)_X$ in $\overline{\cC}(J)$ to the tangent space $T_X (\overline{\sC}(J) X)$ is a linear bijection. Denote $d_X=\dim L_X$ (note that $d_X$ is locally constant, at least for regular points close to $X$). We obtain
  \begin{equation}\label{eq:dimLX}
    d_X \ge \dim \nu_X (\overline{\sC}(J)X) + \dim \h_X = \dim \ag - \dim \cC(J) - 1 + \dim \overline{\cC}(J)_X  + \dim \h_X.
  \end{equation}

  {
  \begin{lemma} \label{l:ineq}
    We have $d_X \le \frac34 \dim \ag$. Moreover, $\dim \overline{\cC}(J)_X  + \dim \h_X \ge \rk \cC(J)+1$ if $\overline{\cC}(J)_X$ is abelian, and $\dim \overline{\cC}(J)_X  + \dim \h_X \ge 5$ if it is not. Furthermore, if $d_X > \frac12 \dim \ag$, then the following holds.
    \begin{enumerate}[label=\emph{(\roman*)},ref=\roman*]
      \item \label{it:cldim1}
      $\dim \overline{\cC}(J)_X = 1$ and $\dim \h_X \ge \rk \cC(J)$.

      \item \label{it:cltrans}
      The group $\sP$ acts transitively on the unit sphere of $\cV$ (recall that this action is almost effective).

      \item  \label{it:clrk}
      $\rk \cC_P(J) = \rk \cP(\cV)$.
    \end{enumerate}
  \end{lemma}
  \begin{proof}
    Suppose $d_X > \frac34 \dim \ag$ for some regular point $X \in \ag$. Then there exists an element $N-J \in \overline{\cC}(J)_X$ whose kernel has dimension greater than $\frac34 \dim \ag$, where $N \in \cC(J) \subset \cN(\cV)$ is nonzero. As $N - J \ne 0$ (since $J \notin \cN(\cV)$), there is a nearby regular point $X' \in \ag$ such that $(N-J)X' \ne 0$, and an element  $N' \in \cN(\cV)$ such that $\dim \Ker (N'-J) > \frac 34 \dim \ag$. It follows that $\dim \Ker (N-N') \ge \dim \Ker (N'-J) + \dim \Ker (N'-J) - \dim \ag > \frac12 \dim \ag$. As $N-N' \ne 0$, we arrive at a contradiction with Lemma~\ref{l:dimkerN}.

    To prove the inequalities for $\dim \overline{\cC}(J)_X  + \dim \h_X$ we recall that $\overline{\cC}(J)_X = \Span(N-J, N_1, \dots, N_k),\; k = \dim \overline{\cC}(J)_X - 1$, where $N \ne 0$ and $N, N_1, \dots, N_k \in \cC(J)$. Note that $\kg = \Span(N_1, \dots, N_k) = \overline{\cC}(J)_X \cap \cC(J)$, and so $\kg$ is a subalgebra of $\overline{\cC}(J)_X$ of codimension one. As all algebras are reductive, we can assume that $N$ is chosen in such a way that $[N-J, \kg]= 0$ and $N-J \perp \kg$ (relative to the Killing form of $\so(\ag)$). Now if $\overline{\cC}(J)_X$ is abelian, we obtain $\dim \overline{\cC}(J)_X  + \dim \h_X \ge \rk \overline{\cC}(J)_X  + \rk \h_X = \rk \overline{\cC}(J)$, as $\h_X$ is the orthogonal complement to $\overline{\cC}(J)_X$ in the centralizer of $\overline{\cC}(J)_X$ in $\overline{\cC}(J)$. The required inequality follows as $\overline{\cC}(J) = \cC(J) \oplus \br J$. If $\overline{\cC}(J)_X$ is not abelian, then there exists a semisimple subalgebra $\kg' \subset \kg$, and so $\overline{\cC}(J)_X = \br \, (N-J) \oplus \kg \supset \br \, (N-J) \oplus \kg'$. Now if $\kg' \ne \kg$ we get $\overline{\cC}(J)_X \ge 5$, and if $\kg' = \kg$, then $\h_X$ contains the orthogonal complement $\br J'$ to the element $N-J$ in $\Span(N,J)$. Indeed, $[J', \overline{\cC}(J)_X]=0$, as both $J$ and $J-N$ commute with $\kg$, and $J' \perp \kg$, as $\kg$ is semisimple. It follows that $\dim \overline{\cC}(J)_X  + \dim \h_X \ge 4+1=5$.

    Now suppose that $d_X > \frac12 \dim \ag$, for some regular point $X \in \ag$.

    If $\dim \overline{\cC}(J)_X > 1$, then $\overline{\cC}(J)_X$ contains a nonzero element $N \in \cC(J) \subset \cN(\cV)$. Then $\dim \Ker N > \frac12 \dim \ag$ contradicting Lemma~\ref{l:dimkerN}. This proves assertion~\eqref{it:cldim1}. Hence we have $\overline{\cC}(J)_X = \br \, (N-J)$, for some nonzero $N \in \cC(J) \subset \cN(\cV)$. As $\overline{\cC}(J)_X$ is abelian, the inequality $\dim \h_X \ge \rk \cC(J)$ follows from the previous assertion.

    For assertion~\eqref{it:cltrans}, suppose a principal orbit of $\sP$ on the unit sphere of $\cV$ has positive codimension. Then there exists $J' \in \cV$ such that $J$ and $J'$ are linearly independent and $\cC_P(J) = \cC_P(J')$. It follows that $\cC(J) = \cC(J')$ (as $\cC(\cV)$ acts trivially on $\cV$), and so the $2$-dimensional subspace $\Span(J,J')$ is GO of centralizer type (see Lemma~\ref{l:W} for a more general statement). But from Proposition~\ref{p:ct} it follows that if $\Span(J,J')$ is non-singular, then the matrix $[J,J']$ is also non-singular. Now, we have an element $N \in \cC(J) = \cC(J')$ such that $\dim \Ker (N-J) > \frac12 \dim \ag$, which implies that $[J,J']=[J',N-J]$ is singular, a contradiction.

    To prove assertion~\eqref{it:clrk}, suppose that $\rk \cC_P(J) < \rk \cP(\cV)$ for some generic $J \in \cV$. As before, we have $\dim \Ker (N-J) > \frac12 \dim \ag$, for some $N \in  \cC(J)$. Recall that $\cC(J)=\cC_P(J) \oplus \cC(\cV)$ (direct sum of ideals), and so $N=N_1+N_2$, where $N_1 \in \cC_P(J)$ and $N_2 \in \cC(\cV)$. As $\rk \cC_P(J) < \rk \cP(\cV)$, there exists a nonzero $N' \in \cP(\cV)$ which commutes with $N_1$, but does not belong to $\cC_P(J)$. Then $[N',N-J] = -[N',J]$, which is a nonzero element of $\cV$ (since $N' \notin \cC_P(J)$), and hence is non-singular. This is a contradiction, as  $[N',N-J]$ is singular, since $\dim \Ker (N-J) > \frac12 \dim \ag$.
  \end{proof}
  }

  We separately consider two cases: $d_X > \frac12 \dim \ag$ and $d_X \le \frac12 \dim \ag$.

  {
  \begin{lemma} \label{l:dX>halfa}
  Suppose that for some generic $J \in \cV$ and regular $X \in \ag$ we have $d_X > \frac12 \dim \ag$. Then the only possible cases for $\dim \ag, \dim \cV, \cP(\cV), \cC(\cV)$ and the representations of $\cP(\cV)$ on $\cV$ and on $\ag$ are as given in cases $1, 2, 4, 5, 7, 8, 10, 12$ and $18$ of Table~\ref{tab:dims1}.
  \end{lemma}
  \begin{proof}
  From Lemma~\ref{l:ineq} we know that $d_X \le \frac34 \dim \ag$ and that $\dim \overline{\cC}(J)_X = 1$ and $\dim \h_X \ge \rk \cC(J)$, and so~\eqref{eq:dimLX} gives $\dim \cC(J) - \rk \cC(J) \ge \frac14 \dim \ag$. Furthermore, we have $\cC(J)= \cC_P(J) \oplus \cC(\cV)$, where $\cC(\cV)$ is isomorphic to a subalgebra of $\spg(1)$ (by Lemma~\ref{l:red}\eqref{it:redCdiva}), which gives
  \begin{equation}\label{eq:ccpjdX34}
  \dim \cC_P(J) - \rk \cC_P(J) \ge \tfrac14 \dim \ag - 2,
  \end{equation}
  where the right-hand side must be replaced with $\frac14 \dim \ag$ unless $\cC(\cV) \cong \spg(1)$.

  Furthermore, from Lemma~\ref{l:ineq}\eqref{it:cltrans},\eqref{it:clrk} we find that there are only two possibilities for the group $\sP$ (e.g., from \cite[Table~1]{Str}): either $\sP=G_2$ and then $\sC_P(J) = \SU(3)$ and $\dim \cV = 7$, or $\sP=\SO(2m+1), \, m \ge 1$, with its standard action on $\cV = \br^{2m+1}$ and with $\sC_P(J) = \SO(2m)$. In the first case, inequality~\eqref{eq:ccpjdX34} gives $\dim \ag \le 32$, and from formula~\eqref{eq:RH} we get $8 \mid  \dim \ag$, as $\dim \cV = 7$, so that $\dim \ag \in \{8,16,24,32\}$. But by Lemma~\eqref{l:redst}\eqref{it:redPir}, $\dim \ag$ must be divisible by the dimension of a nontrivial representation of $\cP(\cV)$, and the only representations of $\g_2$ of dimension less than or equal to $32$ have dimensions $7, 14$ and $27$, a contradiction. Thus $\cV= \br^{2m+1}, \, m \ge 1$, and $\cP(\cV) = \so(2m+1)$ (with its standard representation on $\cV$). Then $\cC_P(J) = \so(2m)$, and from inequality~\eqref{eq:ccpjdX34} we obtain
  \begin{equation}\label{eq:2mm-1}
  2m(m-1) \ge \tfrac14 \dim \ag - 2.
  \end{equation}
  On the other hand, by~\eqref{eq:RH}, $\dim \ag$ must be large enough for $\so(\ag)$ to admit a $(2m+1)$-dimensional non-singular subspace $\cV \subset \so(\ag)$.

  The rest of the argument follows the same scheme. First, from inequalities~\eqref{eq:RH} and~\eqref{eq:2mm-1} we determine all the possible values for $\dim \ag$ and $\dim \cV$. Then we apply Lemma~\eqref{l:redst}\eqref{it:redPir}: we know that $\cP(\cV) = \so(\cV)$, and that the $\cP(\cV)$-module $\ag$ is the sum of isomorphic irreducible (nontrivial) submodules (and we can have either one, or two, or four summands depending on their type and on $\cC(\cV)$). In particular, the dimension of these modules must divide $\dim \ag$, which in the most cases will be a power of $2$. Then we look at the dimensions of irreducible representation of the algebras $\so(2m+1)$ and list those of them which satisfy this condition; in the most cases, these will be the spin representations (see Table~\ref{tab:spin}). Possible candidates for $\cC(\cV)$ are also determined by Lemma~\eqref{l:redst}\eqref{it:redPir}.

  Following this procedure we first note that the number of cases will be finite: if $\dim \ag = 2^r u$, with $u$ odd, then by~\eqref{eq:RH} we get $\dim \cV \le 2r+1$, and so $m \le r$ giving $r(r-1) \ge 2^{r-3}u-1$ by~\eqref{eq:2mm-1}. It is not hard to see that already if $512 \mid \dim \ag$, the inequalities~\eqref{eq:RH} and~\eqref{eq:2mm-1} are incompatible. We now take $\dim \ag = 2^r u$, with $u$ odd, and with $2 \le r \le 8$ (if $r \le 1$, then $\dim \cV \le 2$ and hence $\cV$ is of centralizer type); we also note that for $r=2$ we must have $u \ge 3$, as $\dim \ag \ge 8$.

  Suppose $\dim \ag = 256 u$, for some odd $u$. From~\eqref{eq:RH} we obtain $2m+1 \le 16$, and so $m \le 7$. Then from~\eqref{eq:2mm-1} we get $u=1$ (and so $\dim \ag =256$) and $m=7$. Then $\dim \cV = 15$ and $\cP(\cV) = \so(15)$. By Lemma~\eqref{l:redst}\eqref{it:redPir}, the representation of $\cP(\cV)$ is the direct sum of isomorphic irreducible (nontrivial) modules, and so $\dim \ag = 256$ must be divisible by the dimension of these modules. This is only possible when $\ag = 2 \Delta_{15}$, and as $\Delta_{15}$ is of real type (see Table~\ref{tab:spin}), Lemma~\eqref{l:redst}\eqref{it:redPir} gives $\cC(\cV) \cong \so(2)$. We get case $1$ from Table~\ref{tab:dims1}.

  Next suppose $\dim \ag = 128 u$, for some odd $u$. Then from~\eqref{eq:RH} and~\eqref{eq:2mm-1} we obtain $\dim \ag = 128$ and $\dim \cV \in \{11, 13, 15\}$. From Lemma~\eqref{l:redst}\eqref{it:redPir}, the representation of $\cP(\cV) = \so(\cV)$ on $\ag$ is the direct sum of isomorphic irreducible modules, whose dimension divides $128$. For $\dim \cV = 15$, the only possible such representation is $\Delta_{15}$. As it is of real type (see Table~\ref{tab:spin}) we obtain $\cC(\cV) = 0$. Similarly, for $\dim \cV = 13$, the only possibility is $\ag=\Delta_{13}$. This module is of quaternionic type, and so $\cC(\cV)$ can be an arbitrary subalgebra of $\spg(1)$. If $\dim \cV = 11$, we must have $\ag = 2 \Delta_{11}$. But this contradicts Lemma~\eqref{l:redst}\eqref{it:redPir}, as $\Delta_{11}$ is of quaternionic type (see Table~\ref{tab:spin}). We obtain cases $2$ and $4$ (with $\dim \cV =13$) in Table~\ref{tab:dims1}.

  Suppose $\dim \ag = 64 u$, where $u$ is odd. Inequalities~\eqref{eq:RH} and~\eqref{eq:2mm-1} give $\dim \ag = 64$ and $\dim \cV \in \{9, 11\}$. Arguing as above we obtain that for $\dim \cV = 11$ we have $\ag = \Delta_{11}$ (then $\cC(\cV) \subset \spg(1)$ is arbitrary, as $\Delta_{11}$ is of quaternionic type). For $\dim \cV = 9$ we get $\ag = 4\Delta_{9}$, and then $\cC(\cV) \cong \spg(1)$ by Lemma~\eqref{l:redst}\eqref{it:redPir} and from Table~\ref{tab:spin}. This gives cases $5$ and $7$ in Table~\ref{tab:dims1}.

  When $\dim \ag = 32 u$, where $u$ is odd, inequalities~\eqref{eq:RH} and~\eqref{eq:2mm-1} are satisfied when $u=3$ and $m=4$ and when $u=1$ and $m = 3, 4$. But in the first case, we have a representation of $\so(9)$ on $\br^{96}$, which must be $6 \Delta_9$ contradicting Lemma~\eqref{l:redst}\eqref{it:redPir}. In the remaining two case we get $\dim \ag =32$ and either $\dim \cV = 9$, and then $\ag = 2 \Delta_9$ and $\cC(\cV)=\so(2)$ (as $\Delta_9$ is of real type from Table~\ref{tab:spin}), or $\dim \cV = 7$, and then $\ag = 4 \Delta_7$ and $\cC(\cV)=\spg(1)$. We arrive at cases $8$ and $10$ in Table~\ref{tab:dims1}.

  Suppose $\dim \ag = 16u$, where $u$ is odd. Inequalities~\eqref{eq:RH} and~\eqref{eq:2mm-1} give $m \in \{2,3\}$. If $m=2$, then $u=1$, and so $\dim \ag = 16$ and $\cP(\cV) = \so(5)$. Its only irreducible representation whose dimension divides $16$ is $\Delta_5$ (there is a complex irreducible representation of dimension $16$, but it gives a real representation of dimension $32$). But then $\ag = 2 \Delta_5$, in contradiction with Lemma~\eqref{l:redst}\eqref{it:redPir}, as $\Delta_5$ is of quaternionic type. Therefore we must have $m=3$ and then $\dim \cV = 7$ and $\cP(\cV) = \so(7)$. From~\eqref{eq:2mm-1} we obtain $u \in \{1,3\}$. If $u = 1$, we get $\dim \ag = 16$ and so $\ag = 2 \Delta_7$ and $\cC(\cV) \cong \so(2)$ by Lemma~\eqref{l:redst}\eqref{it:redPir}, as $\Delta_7$ is of real type.  We arrive at case $12$ in Table~\ref{tab:dims1}. If $u=3$, we get $\dim \ag = 48$. There are only two real, irreducible, nontrivial representations of $\so(7)$ whose dimension divides $48$: $\Delta_7$ of dimension $8$ and $\pi = R(\phi_1 + \phi_3)$ of dimension $48$. But we cannot have $\ag=6\Delta_7$ by Lemma~\eqref{l:redst}\eqref{it:redPir}. To see that the second case is also not possible, we note that $\pi$ is explicitly given as follows. Consider the space $\br^{56}$ with the basis $J_i \otimes e_a$, where $J_i, \, i=1, \dots, 7$, are anticommuting complex structure in $\br^8$, and $e_a, \, a=1, \dots, 8$, are orthonormal vectors in $\br^8$. The action of $\so(7)$ turns $\br^{56}$ into the $\so(7)$-module $\br^7 \otimes \Delta_7$. The map $\psi: \br^7 \otimes \Delta_7 \to \Delta_7$ defined by $\psi(J_i \otimes e_a)=J_ie_a$ is a module homomorphism. Then $\pi = \Ker \psi$. The representation $\pi$ is of real type, and so $\cC(\cV)=0$. In~\eqref{eq:dimLX}, we have $\cC(J) = \so(6)$ (as the action of $\so(7)$ on $\cV=\br^7$ is standard), $\dim \overline{\cC}(J)_X = 1$ and $\dim \h_X \ge 3$ by Lemma~\ref{l:ineq}\eqref{it:cldim1}, and so we get $d_X \ge 36$. On the other hand, from Lemma~\ref{l:ineq} we have $d_X \le 36$. It follows that for a generic $J \in \cV$ and a regular $X \in \ag$, there exists $N \in \cC(J) \subset \cN(\cV)$ such that $\dim \Ker (N-J) = 36$. Choosing $X' \in \ag \setminus \Ker(N-J)$ to be regular (with the same choice of $J$; recall that $J \notin \cN(\cV)$, so $N-J \ne 0$) we get a different element $N' \in \cC(J) \subset \cN(\cV)$ such that $\dim \Ker (N'-J) = 36$. Then $\dim \Ker (N'-N) \ge 24$, and hence $\dim \Ker (N'-N) = 24$ by Lemma~\ref{l:dimkerN}. But it turns out that no nonzero element of $\cN(\cV)$ can have such a large kernel. To see this, we look at the weights of $\pi$. Explicitly, take an arbitrary $w = \a_1 e_1 \wedge e_2 + \a_2 e_3 \wedge e_4  + \a_3 e_5 \wedge e_6 \in \so(7)$ (we can choose $w$ in the Cartan subalgebra $\Span(e_1 \wedge e_2,e_3 \wedge e_4,e_5 \wedge e_6) \subset \so(7)$) and act by $N=\pi(w)$ on an arbitrary element $X=\sum_{i=1}^{7} J_i \otimes u_i \in \ag$ (where $u_i \in \br ^8$ satisfy $\sum_{i=1}^{7} J_i u_i = 0$). Denote $K=\Delta_7(w) = \a_1 J_1 J_2 + \a_2 J_3 J_4  + \a_3 J_5 J_6 \in \so(8)$. We obtain $N.X= \sum_{i=1}^{7} J_i \otimes (Ku_i) + \sum_{s=1}^{3} 2\a_s (J_{2s} \otimes u_{2s-1} - J_{2s-1} \otimes u_{2s})$, and so $X \in \Ker N$ when $K u_{2s-1} = 2 \a_s u_{2s}$ and $K u_{2s} = -2 \a_s u_{2s-1}$ for $s=1,2,3$, and $Ku_7=0$. Note that $K^2$ has eigenvalues $-(\a_1 \pm \a_2 \pm \a_3)^2$, with the corresponding eigenspaces of multiplicity $2$ each. An easy combinatorial argument shows that the kernel of $N$ is of maximal dimension when (up to permutation and scaling) we have $\a_1=\a_2 = 1, \, \a_3 = 0$. Then $K^2$ has two eigenspaces, $E_0$ and $E_{-4}$, of dimension $4$ each, and so $X \in \Ker$ if and only if $u_5,u_6,u_7 \in E_0$ and $u_1, u_3 \in E_{-4}$ are arbitrary (and then $u_2, u_4 \in E_{-4}$ are uniquely determined). This subspace has dimension $20$, and then we subtract $4$, the dimension of $\Ker K$ (note that $K = \psi(\pi(w)) = \psi(N)$), to get $\dim \Ker \pi(w) = 16$. Thus $\dim \Ker N \le 16 < 24$, for all nonzero $N \in \cN(\cV)$, a contradiction.

  Let now $\dim \ag = 8u$, where $u$ is odd. Inequalities~\eqref{eq:RH} and~\eqref{eq:2mm-1} give $m \le 3$ and $u \le 7$. If $u=7$ we get $\dim \ag = 56$ and $\dim \cV = 7$, and so either $\ag=8 \br^7$ or $\ag=7 \Delta_7$, both contradicting Lemma~\ref{l:redst}\eqref{it:redPir}. If $u=5$ we obtain $\dim \ag = 40$ and $\dim \cV = 7$ which gives $\ag=5 \Delta_7$, still in contradiction with Lemma~\ref{l:redst}\eqref{it:redPir}. For $u=3$, we have $\dim \ag = 24$ and either $\dim \cV = 7$ which gives $\ag=3 \Delta_7$, or $\dim \cV = 7$ which gives $\ag=3 \Delta_5$. Both cases contradict Lemma~\ref{l:redst}\eqref{it:redPir}. Finally, let $u=1$, and so $\dim \ag = 8$. If $m \in \{2,3\}$, then $\dim \cV = 2m +1 \ge 5$ and $\cP(\cV) = \so(2m+1)$, so that $\cC(J) \supset \so(2m)$ and hence $\dim \cC(J) \ge 6$; this agrees with the inequalities in case $18$ of Table~\ref{tab:dims1}. If $m=1$, we have $\dim \cV = 3$ and $\cP(\cV)=\so(3)$. Then the representation of $\cP(\cV)$ is either $2 \Delta_3$ or is the irreducible $8$-dimensional representation $\pi$ of $\so(3)$. But the first case contradicts Lemma~\ref{l:redst}\eqref{it:redPir}, as $\Delta_3$ is of quaternionic type, and in the second case, $\Lambda^2 \pi$ contains only one nontrivial $3$-dimensional $\so(3)$-submodule, which is $\pi(\so(3))$ itself, and so $\cV = \cP(\cV)$, in contradiction with the fact that $\cV$ is of common type.

  And finally, if $\dim \ag = 4u$, where $u$ is odd, we must have $u \ge 3$ as $\dim \ag \ge 8$, and $m=1$ from~\eqref{eq:RH}, in contradiction with~\eqref{eq:2mm-1}.
  \end{proof}
  }

  Now we suppose that $d_X \le \frac12 \dim \ag$. Then~\eqref{eq:dimLX} gives us a stronger inequality, but we do not get any extra information on $\cP(\cV)$ (like in Lemma~\ref{l:ineq} for the case $d_X > \frac12 \dim \ag$).

  {
  \begin{lemma} \label{l:dXlehalfa}
  Suppose that for some generic $J \in \cV$ and regular $X \in \ag$ we have $d_X \le \frac12 \dim \ag$. Then the only possible cases for $\dim \ag, \dim \cV, \cP(\cV), \cC(\cV)$ and the representations of $\cP(\cV)$ on $\cV$ and on $\ag$ are as given in cases $2, 3, 4, 5, 6, 8, 9$  and $11$--$18$ of Table~\ref{tab:dims1}.
  \end{lemma}
  \begin{proof}
  From Lemma~\ref{l:ineq} we know that $\dim \overline{\cC}(J)_X  + \dim \h_X \ge \min(5, \rk \cC(J)+1)$, and so inequality~\eqref{eq:dimLX} gives
  \begin{equation}\label{eq:ccpjdX12}
  \begin{split}
     \dim \cC(J) \ge \tfrac12 \dim \ag + \min(4, \rk \cC(J)), &  \quad \text{and so}   \\
     \dim \cC_P(J) \ge \tfrac12 \dim \ag + \min(4, \rk \cC_P(J)), & \quad \text{when } \cC(\cV) = 0, \\
     \dim \cC_P(J) \ge \tfrac12 \dim \ag + \min(3, \rk \cC_P(J)), & \quad \text{when } \cC(\cV) \cong \so(2), \\
     \dim \cC_P(J) \ge \tfrac12 \dim \ag + \min(1, \rk \cC_P(J)-2),  & \quad \text{when } \cC(\cV) \cong \spg(1),
  \end{split}
  \end{equation}
  where the latter three inequalities follow from the fact that $\cC(J)=\cC_P(J) \oplus \cC(\cV)$, and $\cC(\cV)$ is either trivial, or is isomorphic to $\so(2)$ or $\spg(1)$.

  As $\cC_P(J)$ is a subalgebra of $\cP(\cV)$ (the latter acts faithfully on $\cV$) which is the isotropy subalgebra of a generic element $J \in \cV$, we have $\cC_P(J) \subset \so(\cV \cap J^\perp)$, and so $\dim \cC_P(J) \le \frac12(\dim \cV - 1)(\dim \cV - 2)$. On the other hand, from~\eqref{eq:RH} we know that if $\dim \ag = 2^r u$, with $u$ odd, then $\dim \cV \le 2r+1$, and so $\dim \cC_P(J) \le r(2 r- 1)$. Furthermore, the algebra $\cC_P(J)$ cannot be trivial (for otherwise $\cV$ is of centralizer type), and so the right-hand side of~\eqref{eq:ccpjdX12} is, at the very least, equal $\frac12 \dim \ag -1$. We get $r(2 r- 1) \ge 2^{r-1}u-1$ which implies that, similar to the case of Lemma~\ref{l:dX>halfa}, the number of possible cases is finite. It is not hard to see that already if $256 \mid \dim \ag$, we get a contradiction. We now take $\dim \ag = 2^r u$, with $u$ odd, and with $2 \le r \le 7$ (if $r \le 1$, then $\dim \cV \le 2$ and hence $\cV$ is of centralizer type); we also note that for $r=2$ we must have $u \ge 3$, as $\dim \ag \ge 8$ and, similarly to the proof of Lemma~\ref{l:dX>halfa}, check all the possible cases when both~\eqref{eq:ccpjdX12} and~\eqref{eq:RH} are satisfied. To do that, we first determine possible values for $\dim \cV$ coming from~\eqref{eq:RH}. Then we consider subalgebras of $\so(\cV)$ for which~\eqref{eq:ccpjdX12} is satisfied. We use the well known results on maximal subalgebras of $\so(n)$ \cite{Dyn} and also the list of representations of compact simple groups with nontrivial principal isotropy subgroups given in~\cite[Table~A]{HH}, to determine $\cC_P(J)$. This will give us a list of possible subalgebras $\cP(\cV)$ together with their representations on $\cV$. To determine the representations of $\cP(\cV)$ on $\ag$ we use Lemma~\ref{l:redst}\eqref{it:redPir}: $\ag$ must be the sum of either four, or two, or one isomorphic, irreducible $\cP(\cV)$ modules (depending on $\cC(\cV)$ and the type of spin representation given in Table~\ref{tab:spin}).

  Suppose $\dim \ag = 128 u$, where $u$ is odd. Then from~\eqref{eq:RH} we have $\dim \cV \le 15$. Then $u=1$ and so $\dim \ag = 128$, and from~\eqref{eq:ccpjdX12} we obtain $\dim \cC_P(J) \ge 63$. It follows that $\dim \cV \ge 13$ and that $\cP(\cV)$ is one of the algebras $\so(15), \so(14), \so(13)$ or $\so(13) \oplus \so(2)$. In the first three cases, the only representations whose dimensions divide $128$ are the corresponding spin representations, and then the possible candidates for $\cC(\cV)$ are determined from Table~\ref{tab:spin}. This gives cases $2,3$ and $4$ of Table~\ref{tab:dims1}. The case $\cP(\cV)=\so(13) \oplus \so(2)$ is not possible, as then from\eqref{eq:ccpjdX12} we must have $\cC(\cV) \cong \spg(1)$. But the representation of $\so(13)$ on $\ag$ must be $\Delta_{13}$ by Lemma~\ref{l:redst}\eqref{it:redPir}, and the centralizer of its image is $\spg(1)$ (see Table~\ref{tab:spin}), not $\so(2) \oplus \spg(1)$.

  Let $\dim \ag = 64 u$, where $u$ is odd. From~\eqref{eq:RH} we have $\dim \cV \le 11$. Then $u=1, \,\dim \ag = 128$ and $\dim \cC_P(J) \ge 31$. An argument similar to the above gives cases $5$ and $6$ of Table~\ref{tab:dims1}. Similarly, if $\dim \ag = 32 u$, for some odd $u$, we get $\dim \cV \le 9$ and $\dim \cC_P(J) \ge 16u-1$. Then $u=1,\, \dim \ag = 32$, and the only possibilities for $\cP(\cV)$ are $\so(8)$ and $\so(9)$. Using of Lemma~\ref{l:redst}\eqref{it:redPir}, we obtain cases $8$ and $9$ of Table~\ref{tab:dims1}.

  Now suppose that $\dim \ag = 16 u$, where $u$ is odd. From~\eqref{eq:RH} we obtain $\dim \cV \le 8$. Then $\cC_P(J) \subset \so(7)$, and so from~\eqref{eq:ccpjdX12} we get $u=1$ and $\dim \cC_P(J) \ge 8 + \min(1, \rk \cC_P(J)-2)$, in the worst possible case. Then from~\cite{Dyn} and~\cite[Table~A]{HH}, the list of possible candidates for $\cP(\cV) \subset \so(\cV)$ is as follows: $\so(8), \so(7), \mathfrak{spin}(7), \so(6), \so(6) \oplus \so(2), \ug(4)$ and $\su(4)$ given by their standard (or standard plus trivial) representations on $\cV$, with the corresponding principal isotropy subalgebras $\cC_P(J)$, respectively, $\so(7), \so(6), \g_2, \so(5), \so(5), \ug(3)$ and $\su(3)$, and also $\g_2$, with $\cC_P(J) = \su(3)$. In all these cases, except for the latter one, we determine the representation of $\cP(\cV)$ on $\ag$ using Lemma~\ref{l:redst}\eqref{it:redPir} and the type of the corresponding spin representations given in Table~\ref{tab:spin}; we obtain cases
  $11, 12, 13, 14, 16, 17$  and $15$ in Table~\ref{tab:dims1}, respectively. The fact that case $\cP(\cV) = \g_2$ is not possible also follows from Lemma~\ref{l:redst}\eqref{it:redPir}, as the only irreducible representations of $\g_2$ of dimension at most $16$ are in dimensions $7$ and $14$.

  Let now $\dim \ag = 8 u$, with $u$ odd. Then~\eqref{eq:RH} gives $\dim \cV \le 7$, and so from~\eqref{eq:ccpjdX12} we obtain $\dim \cC_P(J) \ge 4u -1$. As $\cC_P(J) \subset \so(6)$, we must have $u \in \{1,3\}$. But if $u=3$, the only possibility is $\cP(\cV)=\so(7)$, and so $\ag = 3 \Delta_7$, in contradiction with Lemma~\ref{l:redst}\eqref{it:redPir}. Then $u=1$, and the first inequality of~\eqref{eq:ccpjdX12} implies $\dim \cC(J) \ge 6$, as required in case $18$ of Table~\ref{tab:dims1}; the same inequality also implies $\dim \cV \ge 4$.

  To complete the proof it remains to consider the case $\dim \a = 4 u$, with $u$ odd. Note that $\dim \ag \ge 8$, and so $u \ge 3$, and by~\eqref{eq:RH}, $\dim \cV \le 3$. But then $\cP(\cV) \subset \so(3)$ and so $\dim \cC_P(J) \le 1$, in contradiction with~\eqref{eq:ccpjdX12}.
 \end{proof}
  }
This completes the proof of Proposition~\ref{p:dimbound}.
\end{proof}

The inequalities in the last row of Table~\ref{tab:dims1} are precisely those which we required in the proof of Lemma~\ref{l:dima8}. Hence Proposition~\ref{p:irst} is proved for $\dim \ag \le 8$.

\subsection{Proof of Proposition~\ref{p:irst} for \ensuremath{\texorpdfstring{\dim}{dim} \texorpdfstring{\ag}{\unichar{"1D51E}} > 8}}
\label{ss:cases}

In this section we complete the proof of Proposition~\ref{p:irst} by showing that in no cases with $\dim \ag > 8$ in Table~\ref{tab:dims1}, the subspace $\cV$ is of common type. Our arguments follow the same scheme. We consider the representation of $\cN(\cV) = \cP(\cV) \oplus \cC(\cV)$. In the most cases, this representation is the exterior tensor product of the corresponding spin representation of $\cP(\cV)$ and the representation of $\cC(\cV)$ on either $\br^2$ or $\br^4$ (recall that all the elements of $\cC(\cV)$ square to a multiple of the identity by Lemma~\ref{l:red}\eqref{it:redCdiva}). Using the exterior tensor product formula
\begin{equation}\label{eq:box}
  \Lambda^2(\pi \hat\otimes  \pi')=\Lambda^2(\pi)\hat\otimes S^2(\pi') + S^2(\pi) \hat\otimes \Lambda^2(\pi')
\end{equation}
 we show that in each case, either the $\cN(\cV)$-submodule $\cV \subset \Lambda^2(\ag)$ of the required dimension does not exist, or it is of Clifford type. In many cases we will use an explicit form of the spin representation constructed from anticommuting product structures or anticommuting complex structures on $\ag$.

\subsubsection{$\cP(\cV) = \so(15)$ or $\cP(\cV) = \so(14)$}
\label{sss:so1514}

From Table~\ref{tab:dims1}, case $1$, we have $\dim \ag =256$, and $\cN(\cV)$ is represented on $\ag$ by $\Delta_{15} \hat\otimes \br^2_{\so(2)}$. The image of $\Delta_{15}$ in $\Lambda^2(\br^{128})$ is given by $\Span(J_iJ_j \, | \, 1 \le i < j \le 15)$, where $J_1, \dots, J_{15}$ are anticommuting complex structures on $\br^{128}$. Moreover, $\Lambda^2(\Delta_{15})=\Lambda^1(\br^{15}) + \Lambda^2(\br^{15}) + \Lambda^5(\br^{15}) + \Lambda^6(\br^{15})$ and $S^2(\Delta_{15})=\Lambda^0(\br^{15}) + \Lambda^3(\br^{15}) + \Lambda^4(\br^{15}) + \Lambda^7(\br^{15})$, where the module $\Lambda^s(\br^{15}), \, s=0, \dots, 7$, is spanned by products of $s$ pairwise different $J_i$'s. Then from~\eqref{eq:box} we obtain that the only $15$-dimensional $(\Delta_{15} \hat\otimes \br^2_{\so(2)})$-submodule $\cV$ of $\Lambda^2(\ag)$ is $\Lambda^1(\br^{15}) \hat\otimes 1_{\so(2)}$ spanned by the matrices $\begin{smallmatrix} J_i & 0 \\ 0 & J_i \end{smallmatrix}$ relative to some orthonormal basis for $\ag$. Any linear combination of such matrices squares to a multiple of the identity, which is a contradiction, as $\cV$ is of Clifford type.

For case $2$ in Table~\ref{tab:dims1}, the contradiction follows from the fact that for a generic $J \in \cV$, the representation of the subalgebra $\cC(J)$ is irreducible (it is $\Delta_14$), and so $J$ must square to a multiple of the identity. The same argument also applies in case $3$.

\subsubsection{$\cP(\cV) = \so(13)$}
\label{sss:so13}

From Table~\ref{tab:dims1}, case $4$, we have $\dim \ag =128$ and $\cV=\cV' \oplus \t$, where $\cP(\cV)$ acts on $\cV' = \br^{13}$ by its standard representation and $t$ is a trivial module of dimension at most $2$. Then $\cV'$ is also an $\cN(\cV)$-module, and hence is a non-singular GO subspace. Now let $J_i \in \so(\ag), \, i=1, \dots, 15$, be pairwise anticommuting complex structures on $\ag$. Then the image of $\Delta_{13}$ in $\Lambda^2(\ag)$ is given by $\Span(J_iJ_j \, | \, 1 \le i < j \le 13)$, where $J_1, \dots, J_{15}$ are anticommuting complex structures on $\br^{128}$, and moreover, $\Lambda^2(\Delta_{13}) = 3 \cdot 1 + 3\Lambda^1(\br^{13})+ \Lambda^2(\br^{13})+ 3\Lambda^5(\br^{13}) + \Lambda^6(\br^{13}) + 3\Lambda^9(\br^{13}) + \Lambda^{10}(\br^{13})$, where the modules $\Lambda^s(\br^{13})$ are spanned by products of $s$ pairwise different $J_i, \, 1 \le i \le 13$. The module $3\Lambda^1(\br^{13})$ is given by $\Span_{i=1}^{13} (J_i) \oplus \Span_{i=1}^{13} (J_iJ_{14})\oplus \Span_{i=1}^{13} (J_iJ_{15})$, and so $\cV'=\Span_{i=1}^{13} (J_i(a\,\id+bJ_{14}+cJ_{15}))$, for some $a,b,c \in \br$, not all zeros. But then $J^2$ is a multiple of the identity, for any $J \in \cV'$. It follows that $\cV'$ is a GO subspace of Clifford type, which is a contradiction with~\cite{Rie} (see Theorem~\ref{tha:Cliff}), as $\dim \cV'=13$.

\subsubsection{$\cP(\cV) = \so(11)$}
\label{sss:so11}

From Table~\ref{tab:dims1}, case 5, we have $\dim \ag =64$, and the representation of $\cP(\cV) = \so(11)$ on $\ag$ is the spin representation $\Delta_{11}$. It is of quaternionic type, and so its image lies in $\spg(16) \subset \so(64)$. We have the following decomposition into irreducible $\so(11)$ modules: $\Lambda^2(\Delta_{11}) = 3\Lambda^0(\br^{11})+ \Lambda^1(\br^{11})+ \Lambda^2(\br^{11})+ \Lambda^5(\br^{11})+ 3\Lambda^3(\br^{11})+3\Lambda^4(\br^{11})$. Explicitly, we take $J_i \in \so(\ag), \, i=1, \dots, 11$, to be pairwise anticommuting complex structures on $\ag=\br^{64}$, and $\spg(1)=\Span(K_1, K_2,K_3)$ to be the centralizer of $\spg(16)$. Denote $\Phi_s=\Span(J_{i_1} \dots J_{i_s} \, | \, 1 \le i_1 < \dots < i_s \le 11)$. Then $\spg(1)$ is the trivial $\so(11)$-module, $\spg(16) = \Phi_1 + \Phi_2 + \Phi_5$ as an $\so(11)$-module, and $3\Lambda^s(\br^{11})= \Span_{a=1}^3 (K_a\Phi_s)$ for $s=3,4$. From Table~\ref{tab:dims1} we have $\dim \cV =11$, and so the only possibility for $\cV$ is $\cV=\Lambda^1(\br^{11})=\Phi_1=\Span_{i=1}^{11}(J_i)$. But then any element of $\cV$ squares to a multiple of the identity, a contradiction.

\subsubsection{$\cP(\cV) = \so(10)$}
\label{sss:so10}

From Table~\ref{tab:dims1}, case 6, we have $\dim \ag =64$ and $\cV=\cV' \oplus \t$, where $\cP(\cV)$ acts on $\cV' = \br^{10}$ by its standard representation and $t$ is a trivial module of dimension at most $1$. Then $\cV'$ is also an $\cN(\cV)$-module, and hence is also a non-singular GO subspace. Furthermore, $\ag$ is the sum of two isomorphic representations $\Delta_{10}$ on $\br^{32}$, which can be viewed as the restriction of the spin representation $\Delta_{11}$ to $\so(10)$. From the decomposition of $\Lambda^2(\Delta_{11})$ given in~\ref{sss:so11} we obtain that $\Lambda^2(\ag)$ contains the sum of only two $10$-dimensional $\so(10)$-submodules. Explicitly, if we take $J_i \in \so(\ag), \, i=1, \dots, 11$, to be pairwise anticommuting complex structures on $\ag=\br^{64}$, and take $\Span(J_i J_j \, | \, 1 \le i < j \le 10)$ for the image of $2\Delta_{10}$, then the sum of two standard $\so(10)$-submodules is given by $\Span(J_i \, | \, 1 \le i \le 10) \oplus \Span(J_iJ_{11} \, | \, 1 \le i \le 10)$. Hence $\cV'=\Span(J_i(a\,I_{64}+bJ_{11}) \, | \, 1 \le i \le 10)$, for some $a,b \in \br$, not both zeros. But then $J^2$ is a multiple of the identity, for any $J \in \cV'$, and so $\cV'$ is a GO subspace of Clifford type, in contradiction which~\cite{Rie} (see Theorem~\ref{tha:Cliff}), as $\dim \cV'=10$.

\subsubsection{$\cP(\cV) = \so(9)$}
\label{sss:so9}

We use the same approach in both cases $7$ and $8$ in Table~\ref{tab:dims1}. In both cases, $\dim \cV =9$.

In case $8$, we have $\dim \ag =32, \, \cN(\cV) = \so(9) \oplus \so(2)$ and $\ag = \Delta_9 \hat\otimes \br^2_{\so(2)}$ as an $\cN(\cV)$-module, where $\br^2_{\so(2)}$ is the standard $\so(2)$-module. We know that $\Lambda^2(\Delta_9)= \Lambda^2(\br^{9})+ \Lambda^3(\br^{9})$ and $S^2(\Delta_9)= 1+ \Lambda^1(\br^{9})+ \Lambda^4(\br^{9})$, and so from~\eqref{eq:box} we obtain that $\Lambda^2(\ag)$ contains only one $9$-dimensional $\cN(\cV)$-submodule $\Lambda^1(\br^{9}) \hat\otimes 1_{\so(2)}$. Explicitly, if $S_i \in \so(16), \, i=1, \dots, 9$, are pairwise anticommuting product structures on $\br^{16}$ (so that $S_i^t=S_i$ and $S_iS_j+S_jS_i = 2\,\id$ for $i \ne j$), we can choose a basis for $\ag$ in such a way that $\cP(\cV) = \Span\big(\left(\begin{smallmatrix}S_iS_j&0\\ 0&S_iS_j\end{smallmatrix}\right) \, | \, 1 \le i < j \le 9 \big)$ and $\cC(\cV) = \br \, \left(\begin{smallmatrix}0& I_{16} \\-I_{16}&0\end{smallmatrix}\right)$. Then $\cV = \Lambda^1(\br^{9}) \hat\otimes 1_{\so(2)} = \Span\big(\left(\begin{smallmatrix}0&S_i\\ -S_i&0\end{smallmatrix}\right) \, | \, 1 \le i \le 9 \big)$, and so every element of $\cV$ squares to a multiple of the identity, a contradiction.

In case $9$, we have $\dim \ag =64, \, \cN(\cV) = \so(9) \oplus \spg(1)$ and $\ag = \Delta_9 \hat\otimes \br^4_{\spg(1)}$ as an $\cN(\cV)$-module, where $\br^4_{\spg(1)}$ is the standard $\spg(1)$-module. Arguing as in the previous paragraph and noticing that $\Lambda^2(\br^4_{\spg(1)}) = 3 \cdot 1 + \br^3$ as the $\so(3)$ module, we obtain that $\cV$ is contained in the sum of three isomorphic $9$-dimensional standard $\cN(\cV)$-submodules. In the notation of the previous paragraph and of Section~\ref{ss:dima8}, we obtain that (relative to particular orthonormal bases for $\br^4$ and $\br^{16}$) there exists a unit, imaginary quaternion $a$ such that $\cV = \Span(R_a \otimes S_i \, | \, 1 \le i \le 9\}$. Then every element of $\cV$ squares to a multiple of the identity which again gives a contradiction.

\subsubsection{$\cP(\cV) = \so(8)$}
\label{sss:so8}

From Table~\ref{tab:dims1}, this is possible in two cases, $9$ and $11$. The argument in both cases is similar.

In case $11$, we have $\dim \ag = 16$. If the representation of $\cP(\cV)$ on $\ag$ is given by $2\Delta_8^{\pm}$, then the representation of $\cN(\cV)= \so(8) \oplus \so(2)$ on $\ag$ is $\Delta_8^{\pm} \hat\otimes \br^2_{\so(2)}$. But then for a generic $J \in \cV$, the representation of the subalgebra $\cC(J)= \so(8) \oplus \so(2)$ on $\ag$ is $\Delta_7^{\pm} \hat\otimes \br^2_{\so(2)}$, which is irreducible. Then $J^2$ is a multiple of the identity, a contradiction. Furthermore, if the representation of $\cP(\cV)$ on $\ag$ is given by $2\br^8$, then the representation of $\cN(\cV)= \so(8) \oplus \so(2)$ on $\ag$ is $\br^8{\pm} \hat\otimes \br^2_{\so(2)}$, and so by~\eqref{eq:box} we obtain $\Lambda^2(\br^8 \hat\otimes \br^2_{\so(2)}) = (1+S^2_0(\br^8)) \hat\otimes 1_{\so(2)} + \Lambda^2(\br^8) \hat\otimes (1 + \br^2)_{\so(2)}$, which contains no $8$-dimensional submodules, a contradiction.

In case $9$, we have $\dim \ag = 32$. If the representation $\cN(\cV)= \so(8) \oplus \spg(1)$ on $\ag$ is $\Delta_8^{\pm} \hat\otimes \br^4_{\spg(1)}$, then for a generic $J \in \cV$, the representation of the subalgebra $\cC(J)= \so(8) \oplus \spg(1)$ on $\ag$ is $\Delta_7^{\pm} \hat\otimes \br^2_{\so(2)}$. It is irreducible, and so $J^2$ is a multiple of the identity. If the representation of $\cN(\cV)= \so(8) \oplus \spg(1)$ on $\ag$ is $\br^8{\pm} \hat\otimes \br^4_{\spg(1)}$, then~\eqref{eq:box} gives $\Lambda^2(\br^8 \hat\otimes \br^4_{\spg(1)}) = (1+S^2_0(\br^8)) \hat\otimes (3 \cdot 1 + \br^3)_{\so(3)} + \Lambda^2(\br^8) \hat\otimes (1 + 3 \cdot \br^3)_{\so(3)}$, which contains no $8$-dimensional and no $9$-dimensional submodules, again a contradiction.

\subsubsection{$\cP(\cV) = \so(7)$}
\label{sss:so7}

Table~\ref{tab:dims1} gives three cases, $10, 12$ and $13$.

We first consider cases $12$ and $13$, when $\dim \ag = 16$. Then $\cN(\cV) = \so(7) \oplus \so(2)$ and $\ag = \Delta_7 \hat\otimes \br^2_{\so(2)}$ as an $\cN(\cV)$-module. As $\Lambda^2(\Delta_7) = \Lambda^1(\br^7) + \Lambda^2(\br^7)$ and $S^2(\Delta_7) = 1 + \Lambda^3(\br^7)$, we obtain $\Lambda^2(\ag) = (\Lambda^1(\br^7) + \Lambda^2(\br^7)) \hat\otimes (1 + \br^2)_{\so(2)} + (1 + \Lambda^3(\br^7)) \hat\otimes 1_{\so(2)}$. Note that this decomposition does not contain $\Delta_7 \hat\otimes (1_{\so(2)}$ (and so case $13$ cannot occur), and moreover, that there are only two possibilities for $\cV$: either $\cV= \br^7 \hat\otimes 1_{\so(2)}$ or $\cV= \br^7 \hat\otimes 1_{\so(2)} + 1_{\so(7) \oplus \so(2)}$. We can explicitly present the subalgebra $\cN(\cV) \subset \so(16)$ and the module $\cV$ as follows. Let $J_i, \, i = 1, \dots, 7$, be pairwise anticommuting complex structures on $\br^8$. Then $\cP(\cV) = \Span\big(\left(\begin{smallmatrix}J_iJ_j&0\\0&J_iJ_j\end{smallmatrix}\right) \, | \, 1 \le i < j \le 7 \big)$ and $\cC(\cV) = \br \, \left(\begin{smallmatrix}0& I_{8} \\-I_{8}&0\end{smallmatrix}\right)$ (relative to a choice of an orthonormal basis for $\ag = \br^{16}$. The module $\cV$ is given by either $\cV = \Span\big(\left(\begin{smallmatrix}J_i&0\\ 0&J_i \end{smallmatrix}\right) \, | \, 1 \le i \le 7 \big)$, or by $\cV = \Span\big(\left(\begin{smallmatrix}J_i&0\\0&J_i\end{smallmatrix}\right) \, | \, 1 \le i \le 7 \big) \oplus \cC(\cV)$. But in the former case, every element of $\cV$ squares to a multiple of the identity giving a contradiction, and in the latter case, any element $\left(\begin{smallmatrix}J_i&I_8\\-I_8&J_i\end{smallmatrix}\right) \in \cV$ is singular, which is, again, a contradiction.

In case $10$, we have $\dim \ag =32$. From Table~\ref{tab:dims1}, we obtain that $\cN(\cV)=\so(7) \oplus \spg(1)$ and $\ag=\Delta_7 \hat\otimes \br^4_{\spg(1)}$ as an $\cN(\cV)$-module. Then from~\eqref{eq:box}, $\Lambda^2(\ag) = (\Lambda^1(\br^7) + \Lambda^2(\br^7)) \hat\otimes (1 + 3 \cdot \br^3)_{\so(3)} + (1 + \Lambda^3(\br^7)) \hat\otimes (3 \cdot 1 + \br^3)_{\so(3)}$. Then $\cV = \br^7 \hat\otimes 1_{\so(3)}$ and is spanned
by the block-diagonal matrices $\diag(J_i, J_i, J_i, J_i), \, 1 \le i \le 7$, where $J_i$ are as in the previous paragraph, and so again, every element of $\cV$ squares to a multiple of the identity giving a contradiction.

\subsubsection{$\cP(\cV) = \so(6) \cong \su(4)$ or $\cP(\cV) = \so(6) \oplus \so(2) \cong \ug(4)$}
\label{sss:so6}

These are cases 14-17 in Table~\ref{tab:dims1}. In all the cases have $\dim \ag = 16$ and $\cC(\cV)= \spg(1)$. As $\Delta_6$ is of complex type (it is the standard representation of $\su(4)$ on $\br^8$), the centralizer of the image of $2\Delta_6$ in $\so(16)$ is isomorphic to $\ug(2)$. It splits into the direct sum of $\cC(\cV)=\spg(1)$ and the $1$-dimensional centre $\ug(1)$ which may or may not be a summand of $\cP(\cV)$ (in cases $16, 17$ and in cases $14, 15$, respectively). Denote $\cP'= \cP(\cV) + \ug(1) \cong \ug(4)$ (the sum may not be direct). Then both $\cV$ and $\cP'$ lie in $\spg(4) \subset \so(16)$, the centralizer of $\cC(\cV)$. As $\ug(4)$ has the same rank as $\spg(4)$, there is only one, up to conjugation, subalgebra of $\spg(4)$ isomorphic to $\ug(4)$. Then $(\spg(4),\ug(4))$ is a (Hermitian) symmetric pair, and so the $20$-dimensional $\ug(4)$-submodule of $\spg(4)$ which complements $\ug(4) \subset \spg(4)$ is $\ug(4)$-irreducible. It is also $\su(4)$-irreducible (and is isomorphic to $\Lambda^3(\br^6)$) which implies that there is no $\ug(4)$- and no $\su(4)$-submodules $\cV \subset \spg(4)$ of dimension in $\{6,7,8\}$, a contradiction.

\medskip

This completes the proof of Proposition~\ref{p:irst}.

\subsection{Reducible non-singular GO subspaces}
\label{ss:redu}

In this section we complete the proof of Theorem~\ref{th:class}. In view of Proposition~\ref{p:irst}, we can assume that $\ag$ is $\cV$-reducible (recall that the inner product $\ipr$ on $\cV$ is standard).

Suppose $\cV \subset \so(\ag)$ is a non-singular GO subspace such that $\ag$ is $\cV$-reducible. Let $\ag =\oplus_{i=1}^p \ag_i,\; p > 1$, be the orthogonal decomposition of $\ag$ into $\cV$-invariant subspaces such that no subspace $\ag_i$ contains a proper, nontrivial $\cV$-invariant subspace. In the notation of Section~\ref{ss:nc}, for $i =1, \dots, p$, denote $\pi_i: \so(\ag) \to \so(\ag_i)$ the orthogonal projection relative to the Killing form of $\so(\ag)$ and $\cV_i = \pi_i(\cV) \subset \so(\ag_i)$. From Lemma~\ref{l:redst}\eqref{it:redV} we know that $\cV_i \subset \so(\ag_i)$ is again a non-singular GO subspace. Moreover, $\ag_i$ is $\cV_i$-irreducible, and so the pair $(\cV_i, \ag_i)$ is one of the following:
\begin{itemize}
  \item
  $V_i$ is of \emph{common type}. By Proposition~\ref{p:irst}, $\dim \ag_i = 8$ and $V_i$ is as in Theorem~\ref{th:class} (\ref{it:th6}, \ref{it:th7}) (and in Lemma~\ref{l:sp2R8}).

  \item
  $V_i$ is of \emph{centralizer type}. By Corollary~\ref{c:GOct}, we obtain that either $\dim V_i = 1$ and $\dim \ag_i = 2$, or $\dim V_i \in \{2,3\}, \, \dim \ag_i = 4$, and $V_i$ lies in one of the $\so(3)$ ideals of $\so(\ag_i)$.

  \item
  $V_i$ is of \emph{Rep type}. From Section~\ref{ss:gorep}, $\dim V_i = 3, \, \dim \ag_i = 4m_i$, and $V_i$ is the image of an irreducible representation of $\so(3)$ on $\ag_i$. Note that if $m_i=1$, the subspace $V_i$ is also of centralizer type.

  \item
  $V_i$ is of \emph{Clifford type}, and is not of any of the types above. By Theorem~\ref{tha:Cliff}, $\dim \ag_i = 8$ and $\dim V_i \in \{5, 6, 7\}$.
\end{itemize}

Furthermore, as $\cV$ is non-singular, the projection $\pi_i: \cV \to \cV_i$ must be bijective, for all $i=1, \dots, p$. In particular, $\dim \cV_i = \cV$, and so $\dim\cV \in \{1,2,3,5,6,7\}$.

We consider these cases separately.

\subsubsection{$\dim \cV \in \{1,2\}$}
\label{sss:redu12}
Then $\cV$ is of centralizer type, and so by Corollary~\ref{c:GOct} we obtain the subspaces in Theorem~\ref{th:class}\eqref{it:th1} and ~\eqref{it:th2}.

\subsubsection{$\dim \cV = 3$}
\label{sss:redu3}
If $\cV$ is of centralizer type, Corollary~\ref{c:GOct} gives the subspaces in Theorem~\ref{th:class}~\eqref{it:th3}\eqref{it:th3cen}.

Suppose $\cV$ is not of centralizer type. By Lemma~\ref{l:forc}\eqref{it:cpab}, the pure centralizer $\cP(\cV)$ cannot be abelian. By Lemma~\ref{l:red}\eqref{it:redP}, it is isomorphic to a subalgebra of $\so(\cV) = \so(3)$, and so $\cP(\cV) \cong \so(3)$. Furthermore, from the above list it follows that each $\cV_i \subset \so(\ag_i)$ is of Rep type: in the decomposition $\ag = \oplus_{i=1}^p \ag_i$ we have $\dim \ag_i = 4m_i$, for $i=1, \dots, p$, and $\cV_i \subset \so(\ag_i)$ is a subalgebra which is the image of an irreducible representation $\rho_i$ of $\so(3)$ on $\ag_i$ (note that we must have $m_i > 1$ for at least one $i$, as otherwise $\cV$ is of centralizer type). For every such representation $\rho_i$ we have $\cC(\cV_i) = \spg(1) \subset \so(\ag_i)$ and $\cP(\cV_i) = \cV_i =\rho_i(\so(3)) \subset \so(\ag_i)$. It now follows from Lemma~\ref{l:red}\eqref{it:redP} that $\cP(\cV)=\rho(\so(3))$ where $\rho=\oplus_{i=1}^p \rho_i$ is the direct sum of the representations $\rho_i$. As $\cV$ is a $\cP(\cV)$-module, we obtain that there exist nonzero constants $\la_1, \dots, \la_p$ such that for a basis $E_1, E_2, E_3$ for $\so(3)$ we have $\cV = \Span_{s=1,2,3}(\sum_{i=1}^p \la_i\rho_i(E_s)) \subset \oplus_{i=1}^p \so(\ag_i) \subset \so(\ag)$.

To show that we obtain the subspaces $\cV$ which are given in Theorem~\ref{th:class}\eqref{it:th3}\eqref{it:th3rep} (we use an equivalent description given in Section~\ref{ss:dim3}), we need to show that the GO conditions for such subspaces are satisfied if and only if $\la_i = \la_j$ whenever $m_i, m_j >1$. The ``if" part is proved in Section~\ref{ss:dim3}. To the prove the ``only if" part, we can assume without loss of generality that $p=2$ and $m_1, m_2 > 1$ (by Lemma~\ref{l:redst}\eqref{it:redV}). Then we have $\cV = \Span_{s=1,2,3}(\la_1\rho_1(E_s) \oplus \la_2\rho_2(E_s))$ and $\cP(\cV) = \Span_{s=1,2,3}(\rho_1(E_s) \oplus \rho_2(E_s))$. If $m_1 \ne m_2$, the representations $\rho_1$ and $\rho_2$ are not isomorphic, and so by Lemma~\ref{l:red}\eqref{it:redCdec},\eqref{it:redCij} we find that $\cC(\cV) = \cC(\cV_1) \oplus \cC(\cV_2)$, and so $\cC(\cV)$ is isomorphic to the direct sum of the two copies $\spg(1)_i$ of $\spg(1)$, where for $i=1,2$, the subalgebra $\spg(1)_i \subset \so(\ag_i)$ is the  centralizer of $\rho_i(\so(3))$ in $\so(\ag_i)$ and is spanned by anticommuting complex structures $K_i^1, K_i^2, K_i^3 \in \so(\ag_i)$. Let $A_i = \rho_i(E), \, i=1,2$, for a nonzero $E \in \so(3)$, and denote $J = \la_1 A_1 + \la_2 A_2 \in \cV$. Then $\cC(J) = \Span(A_1 + A_2, \spg(1)_1 \oplus \spg(1)_2)$. Take generic elements $X_i \in \ag_i, \, i=1,2$. Then for $X = X_1+ X_2 \in \ag$, equation~\eqref{eq:NXJX} gives $\la_i A_i X_i = \mu A_i X_i + \sum_{s=1}^{3} \eta_i^s K_i^s X_i, \, i=1,2$, for some $\mu, \eta_i^s \in \br$. Hence $(\la_i - \mu) A_i X_i = \sum_{s=1}^{3} \eta_i^s K_i^s X_i$, and so by Lemma~\ref{l:linind} below we obtain $\la_i = \mu$, for $i=1,2$, hence $\la_1 = \la_2$, as required. A little more is needed when $m_1 = m_2 \,( =m)$. Then the representations $\rho_1$ and $\rho_2$ are isomorphic, and choosing the bases for $\ag_1$ and $\ag_2$ accordingly, we obtain by Lemma~\ref{l:red}\eqref{it:redCij} that $\cC(\cV)= \br \left(\begin{smallmatrix}0&I_{4m}\\-I_{4m}&0 \end{smallmatrix}\right) \oplus \Span_{s=1,2,3} (\left(\begin{smallmatrix} 0&K^s\\K^s&0 \end{smallmatrix}\right), \left(\begin{smallmatrix} K^s&0\\0&0\end{smallmatrix}\right), \left(\begin{smallmatrix}0&0\\0&K^s \end{smallmatrix} \right))$ (so that $\cC(\cV) \cong \spg(2)$), where the matrices $K^1, K^2, K^3$ span the subalgebra $\spg(1)$ which centralises $\rho_1(\so(3))$ in $\so(4m)=\so(\ag_1)$. Similar to the above, let $J = \left(\begin{smallmatrix}\la_1 A&0\\0& \la_2 A\end{smallmatrix}\right) \in \cV$, where $A \in \rho_1(\so(3)), \, A \ne 0$. Then $\cC(J) = \br \left(\begin{smallmatrix} A & 0\\ 0 & A\end{smallmatrix}\right) \oplus \cC(\cV)$. Taking $X=(Y,Y)^t$ in equation~\eqref{eq:NXJX}, where $Y \in \br^{4m}$ is generic, we obtain $\la_i A Y = \mu AY + \sum_{s=1}^{3} \eta_i^s K_i Y + \xi_i Y$ for $i=1,2$, where $\eta_i^s, \xi_i \in \br$. But then $(\la_i-\mu)AY \in \Span (K_1Y, K_2Y, K_3Y, Y)$, and so by Lemma~\ref{l:linind} below we obtain $\la_i = \mu$, for $i=1,2$, which again implies $\la_1 = \la_2$.

\begin{lemma} \label{l:linind}
  Suppose $A \in \so(4m), \, m > 1$, is nonzero and commutes with pairwise anticommuting complex structures $K_1, K_2, K_3 \in  \so(4m)$. Then for a generic $X \in \br^{4m}$, the vectors $AX, K_1X, K_2X, K_3X$ and $X$ are linearly independent.
\end{lemma}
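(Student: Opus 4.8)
The plan is to establish the stronger assertion that the set
$Z=\{X\in\br^{4m}: AX,K_1X,K_2X,K_3X,X\text{ are linearly dependent}\}$
is a \emph{proper} algebraic subvariety of $\br^{4m}$. Since $Z$ is the common zero locus of the $5\times5$ minors of the matrix $[\,AX\mid K_1X\mid K_2X\mid K_3X\mid X\,]$ (polynomials in the entries of $X$), it is Zariski closed; and $\br^{4m}$ is irreducible, so once $Z\neq\br^{4m}$ is known, its complement --- the set of generic $X$ --- is automatically dense, which is exactly the claim. Thus I would argue by contradiction, assuming $AX\in\Span_\br(X,K_1X,K_2X,K_3X)$ for \emph{every} $X\in\br^{4m}$, and derive $A=0$, contradicting the hypothesis.

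The key preliminary is the self-adjoint involution $\omega:=K_1K_2K_3$: a short Clifford computation gives $\omega^2=\Id$, $\omega^T=\omega$, and $\omega K_i=K_i\omega$, $\omega A=A\omega$. Decompose $\br^{4m}=V_+\oplus V_-$ into the $(\pm1)$-eigenspaces of $\omega$. Both are $K_i$- and $A$-invariant, and on $V_{\pm}$ one has $K_3=\mp K_1K_2$, so $K_1,K_2,K_1K_2$ are anticommuting complex structures with $(K_1)(K_2)=K_1K_2$; they endow $V_{\pm}$ with the structure of a right $\bh$-module, $V_{\pm}\cong\bh^{m_{\pm}}$ with $m_++m_-=m$, in which $K_1,K_2,K_3$ act componentwise as right multiplication by $\mathrm{i},\mathrm{j},\mp\mathrm{k}$. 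In such coordinates $A|_{V_{\pm}}$ is quaternion-linear and skew, hence left multiplication by a quaternion skew-Hermitian matrix $M_{\pm}$, and $A\neq0$ forces some $M_{\pm}$, say $M_+$, to be nonzero. One also checks directly, componentwise and using that $\bh$ is a division algebra, that $X,K_1X,K_2X,K_3X$ are linearly independent whenever $X\neq0$; hence $X\in Z$ precisely when $AX$ lies in the $4$-dimensional space $\Span_\br(X,K_1X,K_2X,K_3X)$.

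Now feed the contradiction hypothesis with $X\in V_+$: it says $M_+Y\in Y\bh$ for all $Y\in\bh^{m_+}$, where $Y\bh=\Span_\br(Y,K_1Y,K_2Y,K_3Y)$ is the componentwise right $\bh$-span. If $m_+\ge2$, a $\bh$-linearly independent pair of test vectors shows that the quaternion $q$ with $M_+Y=Yq$ does not depend on $Y$; evaluating on the standard basis and invoking skew-Hermiticity then forces $M_+=0$, a contradiction. So, relabelling $V_{\pm}$ if necessary, we may assume $m_+=1$, i.e.\ $M_+$ is left multiplication by some $p\in\Im\bh\setminus\{0\}$; since $m>1$, also $m_-\ge1$. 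If $m_-\ge2$ the same argument gives $M_-=0$, and then testing $X=x_++X_-$ with $x_+\in V_+\setminus\{0\}$ and $X_-\in V_-\setminus\{0\}$ leads, by comparing the $V_-$-components of $AX$ and of a general element of the span, to $px_+=0$, which is absurd. If $m_-=1$ (so $m=2$) then $A|_{V_-}$ is left multiplication by some $r\in\Im\bh$, and testing $X=x_++x_-$ with $x_{\pm}\neq0$ and then specialising $x_-=1$ forces $x_+^{-1}px_+$ to be independent of $x_+$, which is impossible since the conjugation orbit of $p\neq0$ is a $2$-sphere. In every case we have contradicted $A\neq0$, so $Z\neq\br^{4m}$.

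The one genuinely delicate point --- and the main obstacle --- is that the hypothesis does not postulate $K_1K_2=\pm K_3$, so the three $K_i$ need not arise from a single quaternionic structure on $\br^{4m}$. Splitting off the central involution $\omega=K_1K_2K_3$ repairs this on each eigenspace, but since an eigenspace may be quaternionically $1$-dimensional --- where $\Span_\br(X,K_1X,K_2X,K_3X)$ already fills the whole factor and no information is gained --- one is forced into the low-dimensional case analysis above, which has to play $V_+$ and $V_-$ off against each other rather than use either in isolation.
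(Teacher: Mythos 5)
Your proof is correct, and it follows a genuinely different route from the paper's. The paper silently works in the case $K_1K_2K_3 = \pm\Id$, which is what actually occurs in the application (there the $K_i$ generate the quaternionic structure $\cC(\cV_i)\cong\spg(1)$), reduces to $m=2$, and then does an explicit quaternion-matrix computation on $\bh^2\cong\br^8$ using the $\spg(1)\subset\spg(2)\subset\so(8)$ description from Section~3.2. You instead prove the lemma exactly as stated, without assuming $\omega:=K_1K_2K_3=\pm\Id$, by splitting $\br^{4m}=V_+\oplus V_-$ into the $\omega$-eigenspaces and arguing on each $\bh$-module separately; this is genuinely necessary, since for $\omega\ne\pm\Id$ the paper's reduction to a single $\bh^2$ does not directly apply (an $\omega$-eigenspace on which $A$ is nonzero may be quaternionically $1$-dimensional). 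You also correctly replace the paper's orthogonality reduction ($AX,K_iX\perp X$) by the direct observation that $X,K_1X,K_2X,K_3X$ are always independent for $X\ne0$, which again works even when $X$ has components in both eigenspaces. What the paper's approach buys is brevity and concreteness (everything is reduced to two short quaternion identities); what yours buys is that it actually proves the stated generality and makes the Clifford-module bookkeeping transparent.

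One step is lightly compressed: in the $m_+\ge 2$ case you say that after seeing $q_Y$ is $Y$-independent, \emph{evaluating on the standard basis and invoking skew-Hermiticity forces $M_+=0$}. That evaluation only gives $M_+=q\cdot\Id$ with $q\in\Im\bh$, which is not yet zero. The extra input is that constancy of $q_Y$ combined with the transformation law $q_{Ya}=a^{-1}q_Ya$ (coming from $M_+(Ya)=(M_+Y)a$) forces $q$ to be central in $\bh$, hence real, hence $0$ by skew-Hermiticity. Your reduction to a chain of $\bh$-independent pairs does deliver this, so the conclusion is right; it would just be worth spelling out that the centrality of $q$ is the point, not the diagonal shape of $M_+$ alone.
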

\begin{proof}
  As $AX, K_1X,K_2X,K_3X \perp X$, it suffices to show that for at least one $X \in \br^{4m}$, the vectors $AX, K_1X, K_2X, K_3X$ are linearly independent. Furthermore, it is sufficient to establish this fact for $m=2$. Identifying $\br^{8}$ with the module $\bh^2$, in the notation of Section~\ref{ss:dima8}, the subalgebras $\spg(1)$ and $\spg(2)$ have the form as given in~\eqref{eq:sp2modules1}, and so we need to show that for quaternions $a, b \in \Im \bh, \, p \in \bh$, not all three being zero, there exist $x, y \in \bh$ such that for no $u \in \Im \bh$, the two equations $ax+py=xu,\, -\overline{p}x + by = yu$ are consistent. Now if $p \ne 0$, taking $x=p, \, y=1$ in the second equation we obtain $|p|^2=b-u$, which is a contradiction, as the $b-u$ is imaginary. If $p=0$, take $y=1$. Then from the second equation, $u=b$, and then the first equation gives $ax=xb$. Now if $a \ne b$, we take $x=1$, and if $a = b \,( \ne 0)$, we take a nonzero $x \perp 1, a$.
\end{proof}

\subsubsection{$\dim \cV \in \{5,6,7\}$}
\label{sss:redu567}

Let $\ag=\oplus_{i=1}^p \ag_i$ be the decomposition into $\cV$-irreducible subspaces. Then from the list above, $\dim \ag_i = 8$, for all $i=1, \dots, p$. Moreover, if $\dim \cV = 5$, then every subspace $\cV_i \subset \so(\ag_i)$ is of Clifford type, and if $\dim \cV \in \{6,7\}$, then every $\cV_i \subset \so(\ag_i)$ is either of Clifford type, or is as in Theorem~\ref{th:class} (\ref{it:th6}, \ref{it:th7}).

The proof is essentially completed by the following lemma.

\begin{lemma} \label{l:567C}
  Suppose $\dim \cV \in \{5,6,7\}$, and let $\ag=\ag_1 \oplus \ag_2$ be the orthogonal decomposition into $\cV$-irreducible subspaces. If $\cV$ is a GO subspace, then
  \begin{enumerate}[label=\emph{(\alph*)},ref=\alph*]
    \item \label{it:567bothC}
    both $\cV_1$ and $\cV_2$ are of Clifford type, and
    \item \label{it:567allC}
    $\cV$ itself is also of Clifford type.
  \end{enumerate}
\end{lemma}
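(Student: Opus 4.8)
The plan is to regard $\cV$ as the graph of the linear isomorphism $\phi=\pi_2\circ\pi_1^{-1}\colon\cV_1\to\cV_2$, so $\cV=\{(A,\phi A):A\in\cV_1\}\subset\so(\ag_1)\oplus\so(\ag_2)$, and to split according to whether the space $\cC_{12}\subset\h_{12}$ of Lemma~\ref{l:red} vanishes. If $\cC_{12}\ne0$, then by Lemma~\ref{l:red}\eqref{it:redCij} (and $\dim\ag_1=\dim\ag_2=8$) one can choose orthonormal bases of $\ag_1,\ag_2$ in which $\pi_1(J)$ and $\pi_2(J)$ have equal matrices for all $J\in\cV$, so $\cV$ is the diagonal copy $\{(\hat A,\hat A)\}$ of a single subspace $\hat\cV\subset\so(\br^8)$. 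The quaternionic computation of Lemma~\ref{l:linind}/the proof of Lemma~\ref{l:sp2R8}\eqref{it:sp2R8GO} then shows the diagonal of a common‑type subspace is never GO; as $\dim\hat\cV=\dim\cV\ge5$ it is also neither of Rep nor centralizer type, so $\hat\cV$ is Clifford, and $\cV=\operatorname{diag}(\hat\cV)$ is Clifford because $(\hat A,\hat A)^2=\hat A^2\oplus\hat A^2\in\br\,\id_\ag$. From now on I would assume $\cC_{12}=0$; then Lemma~\ref{l:red}\eqref{it:redCdec},\eqref{it:redP} give that $\Ng(\cV)=\Pg(\cV)\oplus\cC(\cV_1)\oplus\cC(\cV_2)$ is block‑diagonal with $\pi_i(\Pg(\cV))\subset\Pg(\cV_i)$, and — the key structural remark — the faithful action of $\Pg(\cV)$ on $\cV$ factors through $\pi_1|_{\Pg(\cV)}$ (since $\pi_1$ is a $\Pg(\cV)$‑equivariant identification $\cV\cong\cV_1$), so $\pi_1|_{\Pg(\cV)}$ is injective.

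For part~(a), I would argue by contradiction: suppose a summand, say $\cV_1$, is of common type; since $\dim\cV\in\{5,6,7\}$ and $\dim\ag_1=8$, this forces $\cV_1=\cV_{d,W}$ with $d\notin W$, $d\not\perp W$, hence $\cC(\cV_1)=0$ and $\Pg(\cV_1)=\spg(2)$. Plugging vectors $X=(X_1,0)$ into the GO condition for $\cV$ shows $\cV_1$ is GO using only the subalgebra $\pi_1(\Pg(\cV))\subset\spg(2)$; by Lemma~\ref{l:sp2R8}\eqref{it:sp2R8normalizer} the element of $\cN(\cV_1)=\spg(2)$ realizing the GO condition is unique for almost every pair and such elements fill an open subset of $\spg(2)$, so $\pi_1(\Pg(\cV))=\spg(2)$. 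Together with the factorization above, $\pi_1|_{\Pg(\cV)}$ is an isomorphism, so $\Pg(\cV)$ is the graph of an injective homomorphism $\psi\colon\spg(2)\to\Pg(\cV_2)$. Now fix a generic $J=(J_1,J_2)\in\cV$ and a generic $X_2\in\ag_2$ and let $X_1$ range over $\ag_1$: by the explicit formulas in the proof of Lemma~\ref{l:sp2R8}, the first component $M_1^*(J_1,X_1)$ of the unique GO element runs over a $4$‑dimensional subset of $\cC_{\spg(2)}(J_1)\cong\so(4)$ that surjects onto each $\spg(1)$‑ideal as a sphere (hence linearly spans $\so(4)$), while the GO condition forces $\psi(M_1^*(J_1,X_1))X_2\in J_2X_2-\cC(\cV_2)X_2$, an affine subspace of $\ag_2$ of dimension $\le\dim\cC(\cV_2)\le1$. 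Since $\psi$ is injective into $\so(\ag_2)$ with no common kernel on $\ag_2$, the linear map $A\mapsto\psi(A)X_2$ has rank $6$ for generic $X_2$, so the image of a linearly spanning set of $\so(4)$ cannot lie in a $1$‑dimensional affine subspace — contradiction. Hence both $\cV_1,\cV_2$ are Clifford.

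For part~(b), with $\cV_1,\cV_2$ Clifford of dimension $k$ on $\br^8$, identify $\cV_i\cong\br^k$ so that $\Pg(\cV_i)=\so(k)$ acts standardly; the graph condition then gives $\Pg(\cV)\cong\{M\in\so(k):[M,S]=0\}=\bigoplus_j\so(n_j)$, where $S=\phi^t\phi$ is the positive‑definite symmetric operator representing $q_2\circ\phi$ relative to the Clifford norm $q_1$ (with $q_i$ proportional to the standard inner product on $\cV_i$, the constant being the same for $i=1,2$ since $\dim\ag_i=8$), and $n_1\ge\dots\ge n_m$ are the eigenvalue multiplicities of $S$. By Lemma~\ref{l:W} the cohomogeneity of $\sP$ on $\cV$, which equals $m$, lies in $\{1,2,3\}$; Lemma~\ref{l:forc} excludes $\Pg(\cV)$ abelian or with trivial principal isotropy; and feeding $X=(X_1,0)$ in again shows $\bigoplus_j\so(n_j)=\pi_1(\Pg(\cV))$ must be the pure normalizer of an admissible inner product on the Clifford subspace $\cV_1$, so by Theorem~\ref{tha:Cliff} $S$ has an eigenvalue of multiplicity $\ge5$ — leaving the patterns $(k)$ and, for $k=6,7$, $(6,1),(5,2),(5,1,1)$. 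Eliminating the non‑scalar patterns (and then pinning the scale) is where I expect the main obstacle to lie: I would rule them out by applying Corollary~\ref{c:GOct} to the non‑singular centralizer‑type subspace $W\subset\cV$ of Lemma~\ref{l:W} (it lies in a sum of $\s_i$'s with $\ag=\bigoplus U_i$, $\dim U_i=4$, all projections $W\to\s_i$ injective, and $\ag_1,\ag_2$ are unions of the $U_i$) and comparing, on the $U_i$ inside $\ag_1$ versus inside $\ag_2$, the (now scalar on each $U_i$) squares of elements of $W$ with the two a priori distinct quadratic forms $q_1$ and $q_2\circ\phi$ visible in the $\cV$‑picture, supplemented by a rank count in the spin representation. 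Once $S$ is scalar, $\phi=cO$ with $O$ orthogonal, and after conjugating the second factor by $O$ one has $\cV=\{(v,cv):v\in\cV_1\}$; if $c\ne1$ the GO condition applied to generic $J=(v,cv)$ and $X=(X_1,X_2)$ requires a single $M\in\cC_{\so(k)}(v)\cong\so(k-1)$ solving $MX_1\equiv vX_1$ and $MX_2\equiv cvX_2$ modulo $\cC(\cV_i)X_i$, a pair of affine conditions whose intersection in $\so(k-1)$ is empty for generic data, forcing $c=1$. Then $\phi$ is an isometry and $J^2=v^2\oplus v^2=-q_1(v)\,\id_\ag$ for every $J=(v,\phi v)\in\cV$, so $\cV$ is of Clifford type, which finishes both statements.
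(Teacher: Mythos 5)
Your overall structure mirrors the paper's: split on whether $\cC_{12}$ vanishes, use Lemma~\ref{l:sp2R8}\eqref{it:sp2R8normalizer} to force a contradiction in part~\eqref{it:567bothC}, and reduce part~\eqref{it:567allC} to analysing the linear map $\phi=\pi_2\circ\pi_1^{-1}$. Your part~\eqref{it:567bothC} argument, however, varies $X_1$ for fixed $X_2$ and then needs a spanning/rank estimate for $A\mapsto\psi(A)X_2$ on the centralizer $\so(4)$; the paper does the dual thing (fix $J,X_1$, vary $X_2$), which makes the conclusion immediate: the uniquely determined $\pi_2(N)$ cannot depend on $X_2$, so $\pi_2(N)=J_2$ for a dense set of $J$, forcing $\cV_2\subset\cN(\cV_2)$, a Rep-type contradiction. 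Your version can likely be made to work, but the ``rank~$6$'' claim and the exact spanning property of the set of $M_1^*(J_1,X_1)$ are not pinned down.

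The real gap is in part~\eqref{it:567allC}. Your reformulation is genuinely different and attractive: write $\cP(\cV)\cong\{B\in\so(k):[B,S]=0\}$ with $S=\phi^t\phi$, get $n_1\geq 5$ from admissibility on $\cV_1$ via Theorem~\ref{tha:Cliff}\eqref{it:Cliff567}, and bound the number of eigenvalue multiplicities $m$ by the cohomogeneity bound of Lemma~\ref{l:W}\eqref{it:W123}. But this still leaves the non-scalar patterns (for $k=7$: $(6,1),(5,2),(5,1,1)$; for $k=6$: $(5,1)$, which your list omits), and you explicitly flag that you do not eliminate them, offering only a sketch via Corollary~\ref{c:GOct} and a ``rank count in the spin representation.'' That sketch is not an argument; it is precisely the technical core of the lemma. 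The paper instead diagonalizes $\phi=\diag(\la_1,\dots,\la_d)$ (with the $d=7$, $\la_7<0$ subtlety you would also need to address, since only orientation-preserving basis changes are available), computes $\cN(\cV)$ explicitly, and shows directly for $d=5,6,7$ that the GO condition~\eqref{eq:NXJX} fails at suitable $J$ and $X=(Y,Y)^t$ whenever some $\la_k\neq 1$ (or $\la_7\neq -1$). Your proposal needs this elimination carried out; as written, part~\eqref{it:567allC} is incomplete.
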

\begin{proof}
  For assertion~\eqref{it:567bothC} we can assume that $\dim \cV \in \{6,7\}$. Suppose the subspace $\cV_1 \subset \so(\ag_1)$ is one of the subspaces given in Theorem~\ref{th:class} (\ref{it:th6}, \ref{it:th7}).

  We first assume that $\cC_{12} \ne 0$. Then by Lemma~\ref{l:red}\eqref{it:redCij}, relative to some orthonormal bases for $\ag_1$ and $\ag_2$, the matrices of $\pi_1(J)$ and $\pi_2(J)$ are the same, for all $J \in \cV$, and so in particular, both $\cV_1$ and $\cV_2$ are the subspaces of $\so(8)$ as given in Theorem~\ref{th:class} (\ref{it:th6}, \ref{it:th7}). By Lemma~\ref{l:sp2R8}\eqref{it:sp2R8normalizer} we have $\cC(\cV_1)=0$. Then by Lemma~\ref{l:red}(\ref{it:redCdec},\ref{it:redCij}) we find that $\cC(\cV)= \br \, \left(\begin{smallmatrix} 0 & I_8\\ -I_8 & 0\end{smallmatrix}\right)$, and by Lemma~\ref{l:red}\eqref{it:redP}, relative to the chosen bases, $\cP(\cV)=\Span(\left(\begin{smallmatrix} T&0\\0&T \end{smallmatrix}\right) \, | \, T \in \cP(\cV_1))$. Moreover, we know from Lemma~\ref{l:sp2R8}\eqref{it:sp2R8normalizer} that $\cP(\cV_1)=\spg(2)$, and that for a generic $J = \left(\begin{smallmatrix} K&0\\0&K \end{smallmatrix}\right) \in \cV$ and a generic $X_1 \in \ag_1$, there exists a unique $T \in  \cP(\cV_1)$ which satisfies the GO conditions~\eqref{eq:NJ} and~\eqref{eq:NXJX}, that is, for which $[T, K]=0$ and $TX_1=KX_1$. We now take such $J = \left(\begin{smallmatrix} K&0\\0&K \end{smallmatrix}\right) \in \cV$ and take $X=X_1+X_2 \in \ag$, with such $X_1 \in \ag_1$ and with $X_2 \in \ag_2$ in~\eqref{eq:NJ} and~\eqref{eq:NXJX}. Then there must exist $N = \left(\begin{smallmatrix} T'&0\\0&T' \end{smallmatrix}\right) + \mu \left(\begin{smallmatrix} 0 & I_8\\ -I_8 & 0\end{smallmatrix}\right) (\, \in \cP(\cV) \oplus \cC(\cV) = \cN(\cV)),\, \mu \in \br$, such that $[A, T']=0$ and $AX_1 = T'X_1 + \mu X_2, \, AX_2 = T'X_2 - \mu X_1$. Choosing $X_2 \not \perp X_1$ and taking the inner product of both sides of the equation $AX_1 = TX_1 + \mu X_2$ with $X_1$ we find that $\mu = 0$. Then we obtain $AX_1 = T'X_1 , \, AX_2 = T'X_2$, and by uniqueness, we get $T'=T$, which implies $AX_2 = TX_2$, for almost all $X_2 \in \ag_2$. Then $T=A$. As $J$ is a generic element of $\cV$ it follows that $\cV_1 \subset \cN(\cV_1)$, which is a contradiction, as $\cV_1$ is not of Rep type.

  Now assume that $\cC_{12} = 0$. Then $\cC(\cV_1)=0$ by Lemma~\ref{l:sp2R8}\eqref{it:sp2R8normalizer}. Moreover, $\cV_2$ is either of Clifford type or is as given in Theorem~\ref{th:class} (\ref{it:th6}, \ref{it:th7}), and in both cases, $\cC(\cV_2)=0$ which implies that $\cC(\cV)=0$, and so $\cN(\cV)=\cP(\cV) \subset \cP(\cV_1) \oplus \cP(\cV_2)$. We now take a generic $J = \left(\begin{smallmatrix} K_1&0\\0&K_2 \end{smallmatrix}\right) \in \cV$ and a generic $X=X_1+X_2 \in \ag$, where $X_1 \in \ag_1, \, X_2 \in \ag_2$. Then from~\eqref{eq:NJ} and~\eqref{eq:NXJX}, there must exist $N= \left(\begin{smallmatrix} N_1&0\\0&N_2 \end{smallmatrix}\right) \in \cN(\cV)$ such that $[N_i,J_i]=0$ and $J_iX_i=N_iX_i$, for $i=1,2$. By Lemma~\ref{l:red}\eqref{it:redP}, we have $N_i \in \cN(\cV_i)$, and so by Lemma~\ref{l:sp2R8}\eqref{it:sp2R8normalizer} applied to $J_1$ and $X_1$, there exists a unique such $N_1$, and hence a unique element $N \in \cN(\cV)$ such that $\pi_1(N)=N_1$ (again, by Lemma~\ref{l:red}\eqref{it:redP}). But then $N_2=\pi_2(N)$ is also uniquely determined, independently of $X_2 \in \ag_2$, and so we obtain $J_2X_2=N_2X_2$, for all $X_2 \in \ag_2$, which implies $J_2 = N_2$, a contradiction, as $\cV_2$ is not of Rep type.

  For assertion~\eqref{it:567allC}, denote $d=\dim \cV \in \{5,6,7\}$. We have $\dim \ag_1 = \dim \ag_2 = 8$. Choose orthonormal bases for $\ag_1$ and for $\ag_2$ relative to which the subspaces $\cV_1$ and $\cV_2$ are spanned by the same $8 \times 8$ anticommuting complex structures $J_k, \, k=1, \dots, d$ (which we can extend to a collection of $7$ such matrices $J_k, \, k=1, \dots, 7$, if $d < 7$). In a Euclidean space $\br^d$, choose an orthonormal basis $\{e_1, \dots, e_d\}$. For $u \in \br^d$ we denote $J_u=\sum_{k=1}^{d} u_k J_k$, where $u=\sum_{k=1}^{d} u_k e_k$. Then for some $\phi \in \End(\br^d)$ we have $\cV = \Span (\left(\begin{smallmatrix} J_u & 0\\ 0 & J_{\phi u}\end{smallmatrix}\right) \, | \, u \in \br^d)$.  Orientation-preserving orthogonal changes of the basis $\{J_1, \dots, J_7\}$ for $\Span(J_1, \dots, J_7)$ can be achieved by changing the orthonormal bases for $\ag_1$ and $\ag_2$, and so by the polar decomposition of the matrix of the endomorphism $\phi$, we can assume that $\phi = \diag(\la_1, \dots, \la_d)$, where $\la_k > 0, \, k=1, \dots, d$ when $d \le 6$, and $\la_k > 0, \, k=1, \dots, 6,\; \la_7 \ne 0$, when $d=7$. Note that $\cV$ is of Clifford type if and only if $\phi$ is orthogonal. Seeking a contradiction we now assume that it is not (which with our choice of bases, means that $\la_k^2 \ne 1$ for at least one $k=1, \dots, d$).

  We want to compute $\cN(\cV)$. By Lemma~\ref{l:red}\eqref{it:redCij} we have $\cC_{12}=0$. Furthermore, $\cC(\cV_1)=\cC(\cV_2)=0$ if $d=6,7$, and $\cC(\cV_1)=\cC(\cV_2)=\br J'$, where $J'=J_6J_7$, if $d =5$, and so by Lemma~\ref{l:red}\eqref{it:redCdec} we obtain that $\cC(\cV)=0$ when $d=6,7$, and $\cC(\cV) = \Span(\left(\begin{smallmatrix} J' & 0\\ 0 & 0\end{smallmatrix}\right),\left(\begin{smallmatrix} 0 & 0\\ 0 & J'\end{smallmatrix}\right))$ when $d=5$. The pure normalizer of $\Span(J_1, \dots, J_d)$ in $\so(8)$ is isomorphic to $\so(d)$ and is given by $\Span_{1 \le k < l \le d}(J_kJ_l)$. For a matrix $B \in \so(d)$, denote $F_B = \sum_{k,l} B_{kl} J_kJ_l \in \so(8)$. Then for $B,C \in \so(d)$ we have $\left(\begin{smallmatrix} F_B& 0\\ 0 & F_C \end{smallmatrix}\right) \in \cP(\cV)$ if and only if $\phi B = C \phi$. This implies that $B$ commutes with $\phi^t \phi$. In particular, if $d < 7$, then $B$ commutes with $\phi$ (as the matrix of $\phi$ is diagonal, with positive diagonal entries), and hence $B=C$.

  We now consider the cases $d=5,6,7$ separately and show that there is a contradiction with the GO condition, in all three cases.

  Suppose $d = 5$. Then $\cP(\cV)$ is spanned by the matrices $\left(\begin{smallmatrix} F_B & 0\\ 0 & F_B\end{smallmatrix}\right)$, where $B$ takes values in some subalgebra of $\so(5)$, and  $\cC(\cV) = \Span(\left(\begin{smallmatrix} J' & 0\\ 0 & 0\end{smallmatrix}\right),\left(\begin{smallmatrix} 0 & 0\\ 0 & J'\end{smallmatrix}\right))$. Suppose $\la_1 \ne 1$. In the equation~\eqref{eq:NXJX}, take $J =\left(\begin{smallmatrix} J_1 & 0\\ 0 & \la_1 J_1\end{smallmatrix}\right) \in \cV$ and $X=(Y,Y)^t \in \ag$, where $Y \in \br^8$ is such that $J_1Y$ and $J'Y$ are linearly independent. We obtain $J_1Y=F_B Y+ \alpha J' Y$ and $\la_1 J_1Y=F_BY+ \beta J' Y$, for some $\alpha, \beta \in \br$ and $B \in \so(5)$, which implies $(1-\la_1)J_1Y=(\alpha-\beta)J'Y$, a contradiction.

  If $d = 6$, the argument is even easier, as $\cC(\cV) = 0$. Take $J =\left(\begin{smallmatrix} J_1 & 0\\ 0 & \la_1 J_1\end{smallmatrix}\right) \in \cV$ (where $\la_1 \ne 1$) and $X=(Y,Y)^t \in \ag$, where $Y \in \br^8$ is such that $J_1Y$ and $J'Y$ are linearly independent. Then from~\eqref{eq:NXJX} we obtain $J_1Y=\la_1 J_1Y=F_B Y$, a contradiction.

  Now let $d = 7$. If $\la_7 > 0$, we repeat the above argument again. Suppose $\la_7 < 0$. Then for $\left(\begin{smallmatrix} F_B& 0\\ 0 & F_C \end{smallmatrix}\right) \in \cP(\cV)$ we have $\phi B = C \phi$ (and so $B$ commutes with $\phi^t \phi$, but not necessarily with $\phi$ itself). Take $J =\left(\begin{smallmatrix} J_7 & 0\\ 0 & \la_7 J_7\end{smallmatrix}\right) \in \cV$. Then $\cC(J)$ is spanned by the matrices $\left(\begin{smallmatrix} F_B& 0\\ 0 & F_C \end{smallmatrix}\right)$, where $B, C \in \so(7)$ are such that $Be_7=Ce_7=0$ and $\phi B = C \phi$. Then we still get $B=C$, and choosing any nonzero $X=(Y,Y)^t \in \ag$ in~\eqref{eq:NXJX} we obtain  $J_7Y=\la_7 J_7Y=F_B Y$, again a contradiction.
\end{proof}

From Lemma~\ref{l:567C} it follows that $\ag$ can be reducible only when $\cV$ is of Clifford type.
This completes the proof of Theorem~\ref{th:class}.

\end{document}